\documentclass[11pt,a4paper]{article}

\usepackage[utf8]{inputenc}
\usepackage{amsmath,amssymb,amsthm}
\usepackage{mathtools}
\usepackage{geometry}
\usepackage[hidelinks]{hyperref}
\usepackage{cleveref}
\usepackage{enumitem}
\usepackage{tikz}
\usepackage{booktabs}
\usepackage{algorithm}
\usepackage{algpseudocode}
\usepackage{graphicx}
\usepackage{float}
\usepackage{cite}
\usepackage{bm}
\usepackage{xcolor}

\newtheorem{theorem}{Theorem}[section]
\newtheorem{lemma}[theorem]{Lemma}
\newtheorem{proposition}[theorem]{Proposition}
\newtheorem{corollary}[theorem]{Corollary}
\newtheorem{definition}[theorem]{Definition}
\newtheorem{remark}[theorem]{Remark}
\newtheorem{example}[theorem]{Example}
\newtheorem{assumption}[theorem]{Assumption}

\newcommand{\R}{\mathbb{R}}
\newcommand{\C}{\mathbb{C}}

\newcommand{\E}{\mathbb{E}}
\newcommand{\Var}{\mathrm{Var}}
\newcommand{\Tr}{\mathrm{Tr}}

\newcommand{\dt}{\,\mathrm{d}t}
\newcommand{\ds}{\,\mathrm{d}s}
\newcommand{\dW}{\,\mathrm{d}W}
\newcommand{\RKHS}{\mathcal{H}_k}
\newcommand{\Koop}{\mathcal{K}}
\newcommand{\inner}[2]{\langle #1, #2 \rangle}
\newcommand{\norm}[1]{\| #1 \|}

\newcommand{\bA}{\mathbf{A}}

\newcommand{\bK}{\mathbf{K}}
\newcommand{\bL}{\mathbf{L}}
\newcommand{\bD}{\mathbf{D}}
\newcommand{\bI}{\mathbf{I}}
\newcommand{\bS}{\mathbf{S}}

\newcommand{\argmin}{\operatornamewithlimits{arg\,min}}

\title{\LARGE Kernel Methods for  Stochastic Dynamical Systems
with Application to Koopman Eigenfunctions:\\[0.5em]
\large Feynman--Kac Representations and RKHS Approximation}

\author{
Boumediene Hamzi$^{1,2}$, Houman Owhadi$^{1}$, and Umesh Vaidya$^{3}$\\[1em]
$^{1}$Department of Computing and Mathematical Sciences, Caltech, Pasadena, CA, USA\\
$^{2}$The Alan Turing Institute, London, UK\\
$^{3}$Department of Mechanical Engineering, Clemson University, Clemson, SC, USA
}

\date{\today}

\begin{document}

\maketitle

\begin{abstract}
We extend the unified kernel framework for transport equations and Koopman eigenfunctions, developed in~\cite{HamziOwhadiVaidya2024} for deterministic systems, to stochastic differential equations (SDEs). In the deterministic setting, three analytically grounded constructions--Lions-type variational principles, Green's function convolution, and resolvent operators along characteristic flows--were shown to yield identical reproducing kernels. For stochastic systems, the Koopman generator includes a second-order diffusion term, transforming the first-order hyperbolic transport equation into a second-order elliptic-parabolic PDE. This fundamental change necessitates replacing the method of characteristics with probabilistic representations based on the Feynman--Kac formula.

Our main contributions include: (i) extension of all three kernel constructions to stochastic systems via Feynman--Kac path-integral representations; (ii) proof of kernel equivalence under uniform ellipticity assumptions; (iii) a collocation-based computational framework incorporating second-order differential operators; (iv) error bounds separating RKHS approximation error from Monte Carlo sampling error; (v) analysis of how diffusion affects numerical conditioning; and (vi) connections to generator EDMD, diffusion maps, and kernel analog forecasting. Numerical experiments on Ornstein--Uhlenbeck processes, nonlinear SDEs with varying diffusion strength, and multi-dimensional systems validate the theoretical developments and demonstrate that moderate diffusion can improve numerical stability through elliptic regularization.
\end{abstract}

\textbf{Keywords:} Koopman operator, stochastic differential equations, Feynman--Kac formula, reproducing kernel Hilbert spaces, transport equations, variational methods

\tableofcontents
\newpage

\section{Introduction}\label{sec:introduction}

The theory of reproducing kernel Hilbert spaces (RKHSs) has long served as a powerful framework for both the analysis and numerical approximation of solutions to partial differential equations (PDEs). In pioneering work, Jacques-Louis Lions~\cite{Lions1986,Lions1988} demonstrated that one can ``invert'' an elliptic operator via a variational formulation in a suitably chosen Hilbert space--typically a Sobolev space where point evaluations are continuous--to obtain a reproducing kernel. Specifically, Lions showed that if one chooses a Hilbert space $\mathcal{H}$ in which the point evaluation functional $\delta_x: \mathcal{H} \to \R$, $\delta_x(u) = u(x)$ is continuous, then by the Riesz representation theorem there exists a unique function $K(x,\cdot) \in \mathcal{H}$ (called the reproducing kernel) satisfying $u(x) = \langle u, K(x,\cdot)\rangle_{\mathcal{H}}$ for all $u \in \mathcal{H}$. Under suitable assumptions--such as smoothness of the domain and proper boundary regularity--Mercer's theorem guarantees that $K(x,y)$ admits an orthonormal expansion $K(x,y) = \sum_{j=0}^\infty \psi_j(x)\psi_j(y)$, where $\{\psi_j\}$ is an orthonormal basis of $\mathcal{H}$. Building on these ideas, Engli\v{s}, Lukkassen, Peetre, and Persson~\cite{Englis2004} generalized this variational framework to a broader class of elliptic operators, while Auchmuty~\cite{Auchmuty2009} developed an alternative approach via spectral (Steklov) expansions.

Time series data, ubiquitous across scientific disciplines, have motivated a broad range of forecasting methods grounded in statistical and machine learning approaches~\cite{Kantz1997,Casdagli1989,Hudson1990,RicoMartinez1992,Grandstrand1995,Gonzalez1998,Chattopadhyay2019,Brunton2016,Pathak2017,Nielsen2019,Abarbanel2012,Pillonetto2011,Wang2011,Brunton2016pnas,Lusch2018,Callaham2021,Kaptanoglu2021,KutzBrunton2022}. Dynamical systems theory provides tools to understand the governing rules underlying such data. Modern data-driven approaches, including sparse identification of nonlinear dynamics (SINDy)~\cite{Brunton2016}, deep learning methods~\cite{Lusch2018,Chattopadhyay2019}, and physics-informed machine learning~\cite{KutzBrunton2022}, have achieved remarkable success in extracting dynamical structure from observations.

Originating with the foundational work of Koopman~\cite{Koopman1931}, Koopman operator theory offers a linear lens through which to study nonlinear dynamics. The Koopman operator is a linear, infinite-dimensional operator that governs the evolution of observables along trajectories of nonlinear dynamical systems~\cite{Mezic2005,Mezic2020,Mezic2021}. A central goal in Koopman analysis is to extract the eigenfunctions of this operator, as they reveal the intrinsic modal structure of the system. Principal eigenfunctions reveal the geometry of the state space: their joint zero-level sets characterize stable and unstable manifolds of equilibrium points~\cite{Mezic2005}, they identify stability boundaries and domains of attraction~\cite{Mauroy2016,Vaidya2022,Matavalam2024}, and they play a key role in optimal control synthesis~\cite{Vaidya2022control,VaidyaHJB2024}. A path-integral formulation for computing Koopman eigenfunctions was introduced in~\cite{Deka2023}. However, challenges remain: the Koopman operator often has both discrete and continuous spectra~\cite{Mezic2005}, and approximating an infinite-dimensional operator via finite-dimensional projections can lead to spectral pollution~\cite{Colbrook2020}. Data-driven Koopman learning methods, such as extended dynamic mode decomposition (EDMD)~\cite{Williams2015}, approximate the Koopman operator using actions on a space of pre-selected basis functions.

Kernel methods supply analytical uncertainty estimates for predictions--either in the Bayesian framework via Gaussian processes~\cite{Rasmussen2006} or via frequentist concentration bounds in the RKHS norm. Recent advances have combined kernel methods with Koopman operator theory~\cite{Klus2016,Das2020,Hou2023,Ishikawa2024}, leveraging the RKHS framework~\cite{Cucker2002} to improve regularization, convergence, and interpretability~\cite{Chen2021,Owhadi2021}. Kernel methods now play a central role in dynamical systems analysis:\cite{BouvHamzi2010,BouvHamzi2012,BouvHamzi2012empirical,BouvHamzi2017SIAM,BouvHamzi2017JCD} developed balanced reduction and empirical estimators for nonlinear control systems in RKHS; \cite{HamziColoniusKernel} extended kernel methods to discrete-time systems;  \cite{Haasdonk2018,Haasdonk2021} developed greedy kernel methods for center manifold approximation;  \cite{GieslHamzi2019} showed how to approximate Lyapunov functions from noisy data; \cite{Hamzi2021} introduced the kernel flows perspective, subsequently extended to sparse representations~\cite{Yang2022} and attractor learning~\cite{Yang2023};  \cite{KlusNuskeHamzi2020} developed kernel-based approximations of the Koopman generator;  \cite{Koltai2019} introduced dimensionality reduction via kernel embeddings of transition manifolds;  \cite{HamziKuehn2018,HamziKuehnMohamed2019} analyzed kernel methods for multiscale systems with critical transitions;  \cite{Hou2024} developed methods for propagating uncertainty in RKHS;  \cite{Lee2025,Lee2024} established kernel methods for Koopman eigenfunctions and differential equations; and  \cite{Lengyel2024} developed kernel sum-of-squares methods for data-adapted kernel learning. Additional contributions to kernel-based dynamical systems analysis include surrogate modeling~\cite{Santin2019} and kernel analog forecasting~\cite{Alexander2020}.

In a companion paper~\cite{HamziOwhadiVaidya2024}, we developed a unified framework for constructing reproducing kernels tailored to transport equations and Koopman eigenfunctions of deterministic dynamical systems. For the deterministic system $\dot{x} = f(x)$, we used the fact that the principal eigenfunctions have associated eigenvalues matching those of the linearization at an equilibrium point to reduce the spectrum computation problem to the solution of a linear advection-type PDE: $f(x) \cdot \nabla\phi(x) = \lambda\phi(x)$. We pursued three complementary approaches: Lions's variational approach, which defines a Hilbert space with graph norm $\|u\|_{\mathcal{H}}^2 = \|u\|_{L^2}^2 + \|Lu\|_{L^2}^2$ and obtains the kernel as the Riesz representer of point evaluation; the Green's function method, which constructs a retarded Green's function $G(x,\xi)$ solving $L_x G(x,\xi) = \delta(x-\xi)$ and forms the kernel by convolution $K_G(x,y) = \int_\Omega G(x,\xi)G(y,\xi)w(\xi)\,d\xi$; and the method of characteristics, which leverages the flow $s_t(x)$ to construct a resolvent kernel via Laplace transform $K_\alpha^{\mathrm{res}}(x,y) = \int_0^\infty e^{-\alpha t} \delta(y - s_t(x))\dt$. Under suitable assumptions, these constructions yield equivalent reproducing kernels, a unification summarized by the diagram: Green's Function $\longleftrightarrow$ Lions's Variational Approach $\longleftrightarrow$ Method of Characteristics. The deterministic framework also established spectral convergence of Mercer modes to Koopman eigenfunctions, boundary regularization techniques, multiple kernel learning schemes, and path-integral kernel constructions.

While the deterministic framework applies to transport equations including the advection, continuity, and Liouville equations, it does not directly address systems with stochastic forcing. In many practical applications, dynamical systems are subject to stochastic perturbations. In molecular dynamics, thermal fluctuations drive Langevin dynamics $dq = M^{-1}p \dt$, $dp = -\nabla V(q) \dt - \gamma M^{-1}p \dt + \sqrt{2\gamma k_B T} \dW$, where stochastic Koopman eigenfunctions reveal metastable states, optimal reaction coordinates, and committor functions for rare events. In financial mathematics, stochastic volatility models such as the Heston model $dS = \mu S \dt + \sqrt{V} S \, dW_1$, $dV = \kappa(\theta - V) \dt + \xi \sqrt{V} \, dW_2$ exhibit eigenfunctions that identify principal risk factors and enable spectral expansions for option pricing. In climate modeling, atmosphere-ocean systems with turbulent diffusion exhibit persistent patterns (El Ni\~{n}o, NAO) as slow Koopman modes, with the eigenvalue spectrum indicating time scales of predictability. In stochastic control, for systems $dX = (G(X) + B(X)u) \dt + \sigma(X) \dW$, Koopman methods enable linear control design in transformed coordinates and risk-sensitive control via eigenfunction-based objectives.

This motivates extending the unified kernel framework to stochastic differential equations (SDEs). Consider the It\^{o} SDE
\begin{equation}\label{eq:sde}
dX_t = G(X_t) \dt + \sigma(X_t) \dW_t,
\end{equation}
where $G: \R^d \to \R^d$ is the drift, $\sigma: \R^d \to \R^{d \times m}$ is the diffusion matrix, and $W_t$ is an $m$-dimensional Wiener process. The stochastic Koopman operator is defined by $(U_t g)(x) = \E[g(X_t) \mid X_0 = x]$, with infinitesimal generator--the Kolmogorov backward operator:
\begin{equation}\label{eq:stoch_generator}
(\Koop g)(x) = G(x) \cdot \nabla g(x) + \frac{1}{2} \Tr\left[a(x) \nabla^2 g(x)\right],
\end{equation}
where $a(x) = \sigma(x)\sigma(x)^\top$ is the diffusion tensor. The key distinction from the deterministic case is immediate: the Koopman eigenfunction equation $\Koop \phi = \lambda \phi$ becomes a second-order elliptic-parabolic PDE rather than a first-order hyperbolic transport equation. This fundamentally changes the mathematical structure: the method of characteristics no longer applies directly; solutions exhibit smoothing effects from the diffusion term; the well-posedness theory shifts from hyperbolic to elliptic/parabolic frameworks with uniform ellipticity replacing causality; and probabilistic representations via the Feynman--Kac formula~\cite{Kac1949} become the natural tool. Second-order derivatives require kernels with higher smoothness ($C^{2,2}$ rather than $C^{1,1}$), and the diffusion term provides an elliptic regularization that can improve conditioning.

This paper extends the unified framework of~\cite{HamziOwhadiVaidya2024} to stochastic systems. Our main contributions are as follows. First, we extend all three kernel constructions--variational, Green's function, and resolvent--to SDEs, with the Feynman--Kac formula replacing the method of characteristics; the stochastic resolvent kernel becomes $k_{\mathrm{res}}(x,y) = \E_x[\int_0^\infty e^{-\lambda t} \delta(y - X_t)\dt]$, integrating over stochastic trajectories. Second, we prove that under uniform ellipticity and mild regularity assumptions, the three stochastic kernel constructions yield identical reproducing kernels: $k_{\mathrm{var}}(x,y) = k_{\mathrm{FK}}(x,y) = k_{\mathrm{res}}(x,y)$. Third, we show that the Mercer eigenfunctions of the constructed kernels converge in $L^2$ to Koopman eigenfunctions when the latter lie in the RKHS. Fourth, we develop a collocation-based method for the second-order Koopman PDE, deriving explicit formulas for the diffusion matrix $D_{ij} = \frac{1}{2}\Tr[a(x_i)\nabla_x^2 k(x_i, x_j)]$. Fifth, we provide Monte Carlo Feynman--Kac algorithms for path-based estimation with error bounds cleanly separating RKHS approximation from sampling error. Sixth, we analyze how diffusion affects numerical conditioning and demonstrate that moderate diffusion improves stability through elliptic regularization. Seventh, we clarify relationships with generator EDMD~\cite{Klus2020}, diffusion maps~\cite{Coifman2006}, and kernel analog forecasting~\cite{Alexander2020}.

The paper is organized as follows. Section~\ref{sec:preliminaries} reviews the deterministic framework and introduces the stochastic setting. Section~\ref{sec:stochastic_kernels} develops the three stochastic kernel constructions and proves their equivalence. Section~\ref{sec:computational} presents computational methods including collocation and Monte Carlo approaches. Section~\ref{sec:error_bounds} establishes error bounds. Section~\ref{sec:connections} discusses connections to related methods. Section~\ref{sec:numerical} provides numerical experiments. Section~\ref{sec:conclusions} concludes.

\section{Preliminaries}\label{sec:preliminaries}

We briefly recall the deterministic framework from~\cite{HamziOwhadiVaidya2024} and then introduce the stochastic setting.

\subsection{Review of the Deterministic Framework}

For the deterministic system $\dot{x} = f(x)$ on $\Omega \subset \R^d$, the Koopman eigenfunction equation is the first-order transport PDE
\begin{equation}\label{eq:det_transport}
f(x) \cdot \nabla \phi(x) = \lambda \phi(x).
\end{equation}

In~\cite{HamziOwhadiVaidya2024}, three equivalent kernel constructions were developed:

\paragraph{Lions's Variational Approach.} Define the operator $L\phi = f \cdot \nabla \phi - \lambda \phi$ and the Hilbert space with graph norm $\norm{u}_\mathcal{H}^2 = \norm{u}_{L^2}^2 + \norm{Lu}_{L^2}^2$. The variational kernel $K_{\mathrm{var}}(x,y)$ is the Riesz representer of point evaluation in this space.

\paragraph{Green's Function.} The retarded Green's function $G(x,\xi)$ satisfies $L_x G(x,\xi) = \delta(x-\xi)$ with causality. The kernel is constructed by convolution:
\begin{equation}
K_G(x,y) = \int_\Omega G(x,\xi) G(y,\xi) w(\xi) \, d\xi.
\end{equation}

\paragraph{Method of Characteristics.} Along the flow $s_t(x)$, solutions satisfy $\phi(s_t(x)) = e^{\lambda t}\phi(x)$. The resolvent kernel is the Laplace transform:
\begin{equation}
K_\alpha^{\mathrm{res}}(x,y) = \int_0^\infty e^{-\alpha t} \delta(y - s_t(x)) \dt, \quad \alpha > \Re(\lambda).
\end{equation}

The Kernel Unification Theorem of~\cite{HamziOwhadiVaidya2024} establishes $K_{\mathrm{var}} \equiv K_G \equiv K_\alpha^{\mathrm{res}}$ under appropriate regularity conditions.

\subsection{Stochastic Dynamical Systems}

We consider the SDE~\eqref{eq:sde} under the following standing assumptions.

\begin{assumption}[Regularity]\label{ass:regularity}
The drift $G \in C^2(\R^d; \R^d)$ and diffusion $\sigma \in C^2(\R^d; \R^{d \times m})$ satisfy:
\begin{enumerate}
\item Polynomial growth: $|G(x)| + \|\sigma(x)\| \leq C(1 + |x|^p)$ for some $C, p > 0$.
\item Local Lipschitz: $G$ and $\sigma$ are locally Lipschitz continuous.
\end{enumerate}
\end{assumption}

\begin{assumption}[Uniform Ellipticity]\label{ass:ellipticity}
The diffusion tensor $a(x) = \sigma(x)\sigma(x)^\top$ satisfies
\begin{equation}
\inf_{x \in \Omega} \inf_{|\xi|=1} \xi^\top a(x) \xi \geq \nu > 0.
\end{equation}
\end{assumption}

\begin{assumption}[Boundary Conditions]\label{ass:boundary}
Unless otherwise stated, we consider the Dirichlet problem: $h = \psi$ on $\partial\Omega$ for prescribed boundary data $\psi \in C(\partial\Omega)$. For unbounded domains, we assume $h(x) \to 0$ as $|x| \to \infty$. For periodic domains, we require $h$ to satisfy the same periodicity as the domain.
\end{assumption}

Under Assumption~\ref{ass:regularity}, the SDE~\eqref{eq:sde} has a unique strong solution for any initial condition. Assumption~\ref{ass:ellipticity} ensures the Koopman PDE is uniformly elliptic, which is crucial for kernel equivalence.

\subsection{Linear--Nonlinear Decomposition}

Following~\cite{Deka2023,HamziOwhadiVaidya2024}, we decompose the drift at an equilibrium (taken to be $x = 0$):
\begin{equation}
G(x) = Ax + F(x), \quad A = \frac{\partial G}{\partial x}(0), \quad F(x) = G(x) - Ax.
\end{equation}

A Koopman eigenfunction admits the decomposition
\begin{equation}\label{eq:eigenfunction_decomp}
\phi(x) = w^\top x + h(x),
\end{equation}
where $w$ is a left eigenvector of $A$ with eigenvalue $\lambda$ (i.e., $w^\top A = \lambda w^\top$), and $h(x)$ is the nonlinear correction.

\begin{theorem}[PDE for Nonlinear Correction]\label{thm:h_pde}
If $w$ is a left eigenvector of $A$ with eigenvalue $\lambda$, then $h(x)$ satisfies
\begin{equation}\label{eq:h_pde}
G(x) \cdot \nabla h(x) + \frac{1}{2} \Tr\left[a(x) \nabla^2 h(x)\right] - \lambda h(x) = -w^\top F(x).
\end{equation}
\end{theorem}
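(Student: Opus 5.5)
The plan is to substitute the decomposition \eqref{eq:eigenfunction_decomp} directly into the stochastic Koopman eigenfunction equation $\Koop\phi = \lambda\phi$, with $\Koop$ the generator \eqref{eq:stoch_generator}. I will use the standing hypothesis that $\phi$ is a Koopman eigenfunction with eigenvalue $\lambda$ and is regular enough ($C^2$) for $\Koop$ to act classically. Since $\phi = w^\top x + h$ and $\Koop$ is linear, I will split
\begin{equation*}
\Koop\phi = \Koop(w^\top x) + \Koop h
\end{equation*}
and compute the linear piece explicitly.

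\textbf{Step 1: the linear term.} For $\ell(x) = w^\top x$ we have $\nabla \ell = w$ and $\nabla^2 \ell = 0$. The diffusion contribution $\tfrac12\Tr[a(x)\nabla^2\ell]$ therefore vanishes identically. This is the only place the second-order term could have interacted with the linear part, and it drops out because the Hessian of a linear function is zero. Hence
\begin{equation*}
\Koop\ell(x) = G(x)\cdot w = w^\top (Ax + F(x)) = w^\top A x + w^\top F(x) = \lambda w^\top x + w^\top F(x),
\end{equation*}
where the last equality uses $w^\top A = \lambda w^\top$.

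\textbf{Step 2: assemble and cancel.} Writing $\Koop\phi = \lambda\phi$ out in full gives
\begin{equation*}
\lambda w^\top x + w^\top F(x) + G\cdot\nabla h + \tfrac12\Tr[a\nabla^2 h] = \lambda w^\top x + \lambda h .
\end{equation*}
The terms $\lambda w^\top x$ cancel on both sides. Rearranging what remains yields exactly \eqref{eq:h_pde}.

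I do not expect any real obstacle here. The one point to flag is interpretation: for complex $\lambda$, the left eigenvector $w$, and hence $\phi$ and $h$, are complex-valued, and $w^\top$ is then the plain transpose with no conjugation. The computation is unchanged in that case. The regularity of $h$ is inherited from that of $\phi$, since $\ell$ is smooth. No ellipticity assumption is needed for this identity; it is purely algebraic.
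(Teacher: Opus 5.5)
Your proposal is correct and follows the paper's proof essentially step for step: split $\Koop\phi$ by linearity, use $\nabla^2(w^\top x)=0$ and $w^\top A=\lambda w^\top$ to get $\Koop(w^\top x)=\lambda w^\top x+w^\top F(x)$, then cancel $\lambda w^\top x$ against the right-hand side of $\Koop\phi=\lambda\phi$. Your extra remarks are accurate: the identity is purely algebraic, needs no ellipticity, and holds verbatim for complex $\lambda$ with the plain transpose; they also usefully make explicit the implicit hypothesis that $\phi$ is a $C^2$ eigenfunction.
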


\begin{proof}
We derive the PDE for $h(x)$ by substituting the decomposition $\phi = w^\top x + h$ into the Koopman eigenfunction equation $\Koop \phi = \lambda \phi$.

\textbf{Step 1: Compute $\Koop(w^\top x)$.}
First, we compute the action of the Koopman generator on the linear term $w^\top x$:
\begin{align}
\Koop(w^\top x) &= G(x) \cdot \nabla(w^\top x) + \frac{1}{2}\Tr[a(x) \nabla^2(w^\top x)].
\end{align}
Since $w^\top x$ is linear in $x$:
\begin{itemize}
\item $\nabla(w^\top x) = w$
\item $\nabla^2(w^\top x) = 0$ (the Hessian of a linear function vanishes)
\end{itemize}
Therefore:
\begin{equation}
\Koop(w^\top x) = G(x) \cdot w = w^\top G(x).
\end{equation}

\textbf{Step 2: Use the linear-nonlinear decomposition of $G$.}
Recall $G(x) = Ax + F(x)$, where $A = \frac{\partial G}{\partial x}(0)$ and $F(x) = G(x) - Ax$ is the purely nonlinear part. Thus:
\begin{equation}
w^\top G(x) = w^\top(Ax + F(x)) = w^\top Ax + w^\top F(x).
\end{equation}
Since $w$ is a left eigenvector of $A$ with eigenvalue $\lambda$ (i.e., $w^\top A = \lambda w^\top$):
\begin{equation}
w^\top Ax = (w^\top A)x = \lambda w^\top x.
\end{equation}
Therefore:
\begin{equation}
\Koop(w^\top x) = \lambda w^\top x + w^\top F(x).
\end{equation}

\textbf{Step 3: Compute $\Koop h(x)$.}
The nonlinear correction $h(x)$ contributes:
\begin{equation}
\Koop h(x) = G(x) \cdot \nabla h(x) + \frac{1}{2}\Tr[a(x) \nabla^2 h(x)].
\end{equation}

\textbf{Step 4: Apply linearity of $\Koop$.}
Since the Koopman generator is linear:
\begin{align}
\Koop \phi &= \Koop(w^\top x + h) = \Koop(w^\top x) + \Koop h \\
&= \lambda w^\top x + w^\top F(x) + G(x) \cdot \nabla h(x) + \frac{1}{2}\Tr[a(x) \nabla^2 h(x)].
\end{align}

\textbf{Step 5: Apply the eigenfunction equation.}
The eigenfunction equation $\Koop \phi = \lambda \phi$ requires:
\begin{equation}
\Koop \phi = \lambda \phi = \lambda(w^\top x + h) = \lambda w^\top x + \lambda h.
\end{equation}

\textbf{Step 6: Equate and simplify.}
Setting the expressions from Steps 4 and 5 equal:
\begin{equation}
\lambda w^\top x + w^\top F(x) + G(x) \cdot \nabla h(x) + \frac{1}{2}\Tr[a(x) \nabla^2 h(x)] = \lambda w^\top x + \lambda h(x).
\end{equation}
Canceling $\lambda w^\top x$ from both sides and rearranging:
\begin{equation}
G(x) \cdot \nabla h(x) + \frac{1}{2}\Tr[a(x) \nabla^2 h(x)] - \lambda h(x) = -w^\top F(x).
\end{equation}
This is exactly equation~\eqref{eq:h_pde}.
\end{proof}

Equation~\eqref{eq:h_pde} is a linear second-order elliptic PDE with:
\begin{itemize}
\item Drift coefficient $G(x)$
\item Diffusion coefficient $a(x)/2$
\item Potential/discount rate $\lambda$
\item Source term $-w^\top F(x)$
\end{itemize}

\begin{remark}[Deterministic Limit]
When $\sigma \equiv 0$, equation~\eqref{eq:h_pde} reduces to
\begin{equation}
f(x) \cdot \nabla h(x) - \lambda h(x) = -w^\top F(x),
\end{equation}
which is exactly the homological equation from the deterministic framework~\cite{HamziOwhadiVaidya2024}.
\end{remark}

\section{Stochastic Kernel Constructions}\label{sec:stochastic_kernels}

We now extend the three kernel constructions to the stochastic setting.

\subsection{Lions-Type Variational Principle with Diffusion}

Define the stochastic transport operator
\begin{equation}
L_{\mathrm{stoch}} \phi(x) := G(x) \cdot \nabla \phi(x) + \frac{1}{2}\Tr[a(x) \nabla^2 \phi(x)] - \lambda \phi(x).
\end{equation}

The variational formulation seeks $h \in \RKHS$ minimizing
\begin{equation}\label{eq:variational_stoch}
J[h] = \norm{L_{\mathrm{stoch}} h + w^\top F}_{L^2(\Omega)}^2 + \gamma \norm{h}_{\RKHS}^2,
\end{equation}
where $\gamma > 0$ is a regularization parameter.

Unlike the deterministic case, the variational functional now involves \emph{second-order} derivatives. This requires kernels with second-order derivative reproducing properties.

\begin{definition}[Admissible Kernels for Stochastic Problems]
A kernel $k: \Omega \times \Omega \to \R$ is \emph{admissible} for the stochastic Koopman problem if:
\begin{enumerate}[label=(\roman*)]
\item $k \in C^{2,2}(\Omega \times \Omega)$ (twice continuously differentiable in each argument)
\item The RKHS $\RKHS$ satisfies $\RKHS \hookrightarrow C^2(\Omega)$
\item For all $h \in \RKHS$ and $y \in \Omega$:
\begin{align}
h(y) &= \inner{h, k(\cdot, y)}_{\RKHS}, \\
\frac{\partial h}{\partial x_i}(y) &= \inner{h, \frac{\partial k}{\partial y_i}(\cdot, y)}_{\RKHS}, \\
\frac{\partial^2 h}{\partial x_i \partial x_j}(y) &= \inner{h, \frac{\partial^2 k}{\partial y_i \partial y_j}(\cdot, y)}_{\RKHS}.
\end{align}
\end{enumerate}
\end{definition}

\begin{example}[Admissible Kernels]
The following kernels are admissible:
\begin{enumerate}
\item \textbf{Gaussian RBF:} $k(x,y) = \exp(-\|x-y\|^2/(2\ell^2))$ -- infinitely differentiable.
\item \textbf{Mat\'{e}rn with $\nu > d/2 + 2$:} Ensures $C^2$ sample paths.
\item \textbf{Polynomial kernels:} $k(x,y) = (1 + x \cdot y)^p$ for $p \geq 2$ -- finitely differentiable.
\end{enumerate}
\end{example}

The variational kernel is defined as the Riesz representer:

\begin{definition}[Variational Kernel]\label{def:var_kernel}
The variational kernel $k_{\mathrm{var}}(x,y)$ is defined by the property that for all $h \in \RKHS$:
\begin{equation}
h(y) = \inner{h, k_{\mathrm{var}}(\cdot, y)}_\mathcal{H},
\end{equation}
where $\mathcal{H}$ is the completion of $C^\infty(\Omega)$ under the graph norm
\begin{equation}
\norm{u}_\mathcal{H}^2 = \norm{u}_{L^2(\Omega)}^2 + \norm{L_{\mathrm{stoch}} u}_{L^2(\Omega)}^2.
\end{equation}
\end{definition}

\subsection{Green's Function via Feynman--Kac}

For deterministic systems, the Green's function is constructed along characteristics. For stochastic systems, the Feynman--Kac formula provides the analogous construction.

\begin{theorem}[Feynman--Kac Formula]\label{thm:feynman_kac}
Let $\Omega \subset \R^d$ be a bounded $C^2$ domain. Let $\tau_\Omega := \inf\{t \geq 0: X_t \notin \Omega\}$ be the first exit time. For the PDE
\begin{equation}
(\lambda - \Koop) u = g \quad \text{in } \Omega, \qquad u = \psi \quad \text{on } \partial\Omega,
\end{equation}
with bounded $g$ and continuous $\psi$, the solution is
\begin{equation}\label{eq:feynman_kac}
u(x) = \E_x\left[e^{-\lambda \tau_\Omega} \psi(X_{\tau_\Omega}) + \int_0^{\tau_\Omega} e^{-\lambda t} g(X_t) \dt\right].
\end{equation}
\end{theorem}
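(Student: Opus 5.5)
The plan is the standard verification argument. We assume a classical solution $u \in C^2(\Omega) \cap C(\bar\Omega)$ of the boundary value problem exists; for a bounded $C^2$ domain this follows from Schauder theory under Assumption~\ref{ass:ellipticity}, provided $\lambda \ge 0$ and $g$ is Hölder continuous. We then show that any such solution must equal the right-hand side of~\eqref{eq:feynman_kac}, which also gives uniqueness. The key tool is Itô's formula applied to the process $Y_t := e^{-\lambda t} u(X_t)$, stopped at a localizing time.

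\textbf{Step 1 (localization and Itô).} Take an exhausting sequence of smooth subdomains $\Omega_n \uparrow \Omega$ with $\bar\Omega_n \subset \Omega$. Let $\tau_n := \inf\{t : X_t \notin \Omega_n\} \wedge n$. On $\bar\Omega_n$ the function $u$ is $C^2$ with bounded derivatives, and $\sigma$ is bounded, so Itô's formula gives
\begin{equation*}
e^{-\lambda \tau_n} u(X_{\tau_n}) = u(x) + \int_0^{\tau_n} e^{-\lambda t}(\Koop u - \lambda u)(X_t)\dt + \int_0^{\tau_n} e^{-\lambda t}\nabla u(X_t)^\top \sigma(X_t)\dW_t.
\end{equation*}
The stochastic integral has a bounded integrand, so it is a true martingale with zero mean. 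Using $(\lambda - \Koop)u = g$ and taking expectations, we obtain
\begin{equation*}
u(x) = \E_x\!\left[e^{-\lambda\tau_n}u(X_{\tau_n}) + \int_0^{\tau_n} e^{-\lambda t} g(X_t)\dt\right].
\end{equation*}

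\textbf{Step 2 (finiteness of the exit time).} This is the main obstacle. We must show $\E_x[\tau_\Omega] < \infty$, in particular $\tau_\Omega < \infty$ almost surely. The first attempt is a barrier function $v(x) = -e^{\mu x_1}$, with $\mu$ large. By uniform ellipticity, $a_{11} \ge \nu$, and $G$ is bounded on $\bar\Omega$, so
\begin{equation*}
\Koop v = -e^{\mu x_1}\left(\mu G_1 + \tfrac12 \mu^2 a_{11}\right) \le -c < 0 \quad \text{on } \bar\Omega.
\end{equation*}
Applying Dynkin's formula to $v$ up to $\tau_\Omega \wedge t$ gives $c\,\E_x[\tau_\Omega \wedge t] \le 2\sup_{\bar\Omega}|v|$, and letting $t \to \infty$ yields $\E_x[\tau_\Omega] < \infty$. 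This step is where Assumption~\ref{ass:ellipticity} is genuinely used.

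\textbf{Step 3 (passage to the limit).} Let $n \to \infty$. Then $\tau_n \uparrow \tau_\Omega$, and by path continuity $X_{\tau_n} \to X_{\tau_\Omega} \in \partial\Omega$. The first term is bounded by $\sup_{\bar\Omega}|u|$ when $\lambda \ge 0$, and the integral term is bounded by $\|g\|_\infty \tau_\Omega$, which is integrable by Step 2. Dominated convergence therefore applies to both. Since $u$ is continuous on $\bar\Omega$ and $u = \psi$ on $\partial\Omega$, we get $u(X_{\tau_\Omega}) = \psi(X_{\tau_\Omega})$, which gives~\eqref{eq:feynman_kac}.

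For $\lambda < 0$ we would additionally need $\E_x[e^{|\lambda|\tau_\Omega}] < \infty$, i.e.\ $|\lambda|$ below the principal Dirichlet eigenvalue. We would add this as an implicit assumption of the statement, and handle it with the same barrier argument using an exponential moment bound.
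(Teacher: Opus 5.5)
Your proposal is correct and takes the same route as the paper. Both apply It\^{o}'s formula to $e^{-\lambda t}u(X_t)$, stop, take expectations so the martingale term vanishes, and pass to the limit using finiteness of the exit time.

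Your version is more careful than the paper's in three places:
\begin{enumerate}
\item Localizing to $\bar\Omega_n \subset \Omega$ removes the paper's unjustified assumption that $\nabla u$ is bounded up to $\partial\Omega$. That assumption fails in general for merely continuous $\psi$.
\item The barrier $-e^{\mu x_1}$ actually proves $\E_x[\tau_\Omega]<\infty$, which the paper only asserts.
\item Your caveat for $\lambda<0$ makes explicit a restriction that the paper's dominated-convergence step also imposes silently, since it relies on $\Re(\lambda)>0$.
\end{enumerate}
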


\begin{proof}
We provide a complete proof in several steps.

\textbf{Step 1: It\^{o}'s formula for the discounted process.}
Define the process $Y_t = e^{-\lambda t} u(X_t)$. By It\^{o}'s formula applied to $Y_t$:
\begin{align}
dY_t &= d(e^{-\lambda t} u(X_t)) \\
&= -\lambda e^{-\lambda t} u(X_t) \dt + e^{-\lambda t} du(X_t) \\
&= -\lambda e^{-\lambda t} u(X_t) \dt + e^{-\lambda t} \left[\nabla u(X_t)^\top dX_t + \frac{1}{2}\Tr[a(X_t)\nabla^2 u(X_t)]\dt\right].
\end{align}
Substituting $dX_t = G(X_t)\dt + \sigma(X_t)dW_t$:
\begin{align}
dY_t &= e^{-\lambda t}\left[-\lambda u(X_t) + G(X_t)\cdot\nabla u(X_t) + \frac{1}{2}\Tr[a(X_t)\nabla^2 u(X_t)]\right]\dt \\
&\quad + e^{-\lambda t}\nabla u(X_t)^\top \sigma(X_t) dW_t \\
&= e^{-\lambda t}\left[(\Koop - \lambda) u(X_t)\right]\dt + e^{-\lambda t}\nabla u(X_t)^\top \sigma(X_t) dW_t.
\end{align}

\textbf{Step 2: Using the PDE.}
Since $u$ satisfies $(\lambda - \Koop)u = g$, we have $(\Koop - \lambda)u = -g$, so:
\begin{equation}
dY_t = -e^{-\lambda t} g(X_t)\dt + e^{-\lambda t}\nabla u(X_t)^\top \sigma(X_t) dW_t.
\end{equation}

\textbf{Step 3: Integration and stopping.}
Integrating from $0$ to $t \wedge \tau_\Omega$ (the minimum of $t$ and the exit time):
\begin{equation}
Y_{t \wedge \tau_\Omega} - Y_0 = -\int_0^{t \wedge \tau_\Omega} e^{-\lambda s} g(X_s)\ds + \int_0^{t \wedge \tau_\Omega} e^{-\lambda s}\nabla u(X_s)^\top \sigma(X_s) dW_s.
\end{equation}
Since $Y_0 = u(X_0) = u(x)$:
\begin{equation}
e^{-\lambda(t \wedge \tau_\Omega)} u(X_{t \wedge \tau_\Omega}) = u(x) - \int_0^{t \wedge \tau_\Omega} e^{-\lambda s} g(X_s)\ds + M_{t \wedge \tau_\Omega},
\end{equation}
where $M_t = \int_0^t e^{-\lambda s}\nabla u(X_s)^\top \sigma(X_s) dW_s$ is a local martingale.

\textbf{Step 4: Taking expectations.}
Under our regularity assumptions (bounded $\Omega$, $C^2$ boundary, smooth coefficients), $M_{t \wedge \tau_\Omega}$ is a true martingale with $\E_x[M_{t \wedge \tau_\Omega}] = 0$. This follows because:
\begin{itemize}
\item $\nabla u$ is bounded on $\bar{\Omega}$ by regularity of $u$,
\item $\sigma$ is bounded on the compact set $\bar{\Omega}$,
\item The integrand is therefore bounded, making $M_t$ a true martingale.
\end{itemize}
Taking expectations:
\begin{equation}
\E_x\left[e^{-\lambda(t \wedge \tau_\Omega)} u(X_{t \wedge \tau_\Omega})\right] = u(x) - \E_x\left[\int_0^{t \wedge \tau_\Omega} e^{-\lambda s} g(X_s)\ds\right].
\end{equation}

\textbf{Step 5: Limit as $t \to \infty$.}
For bounded domains with uniform ellipticity (Assumption~\ref{ass:ellipticity}), $\tau_\Omega < \infty$ almost surely, and $\E_x[\tau_\Omega] < \infty$. As $t \to \infty$, $t \wedge \tau_\Omega \to \tau_\Omega$ a.s. By dominated convergence (using boundedness of $u$, $g$, and integrability of $e^{-\Re(\lambda) t}$ for $\Re(\lambda) > 0$):
\begin{equation}
\E_x\left[e^{-\lambda \tau_\Omega} u(X_{\tau_\Omega})\right] = u(x) - \E_x\left[\int_0^{\tau_\Omega} e^{-\lambda s} g(X_s)\ds\right].
\end{equation}
Since $u = \psi$ on $\partial\Omega$, we have $u(X_{\tau_\Omega}) = \psi(X_{\tau_\Omega})$. Rearranging:
\begin{equation}
u(x) = \E_x\left[e^{-\lambda \tau_\Omega} \psi(X_{\tau_\Omega}) + \int_0^{\tau_\Omega} e^{-\lambda t} g(X_t)\dt\right].
\end{equation}
This completes the proof.
\end{proof}

Applying this to the nonlinear correction equation~\eqref{eq:h_pde}:

\begin{corollary}[Feynman--Kac Representation of $h$]\label{cor:h_feynman_kac}
The nonlinear correction $h(x)$ satisfies
\begin{equation}\label{eq:h_FK}
h(x) = \E_x\left[e^{-\lambda \tau_\Omega} \psi(X_{\tau_\Omega}) + \int_0^{\tau_\Omega} e^{-\lambda t} w^\top F(X_t) \dt\right],
\end{equation}
where $\psi$ is the boundary condition on $\partial\Omega$.
\end{corollary}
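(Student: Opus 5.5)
The corollary follows by specializing Theorem~\ref{thm:feynman_kac} to the nonlinear correction equation. By Theorem~\ref{thm:h_pde}, $h$ satisfies $\Koop h - \lambda h = -w^\top F$ in $\Omega$. Equivalently,
\begin{equation*}
(\lambda - \Koop) h = w^\top F \quad \text{in } \Omega,
\end{equation*}
and by Assumption~\ref{ass:boundary} we have $h = \psi$ on $\partial\Omega$. This is exactly the boundary value problem of Theorem~\ref{thm:feynman_kac} with source $g := w^\top F$ and boundary datum $\psi$. Formula~\eqref{eq:h_FK} is then the Feynman--Kac representation~\eqref{eq:feynman_kac} with this choice of $g$.

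The work lies in checking the hypotheses of Theorem~\ref{thm:feynman_kac}, in this order.

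First, $g = w^\top F$ must be bounded on $\Omega$. Since $G \in C^2$ by Assumption~\ref{ass:regularity}, the remainder $F(x) = G(x) - Ax$ is continuous, so $w^\top F$ is bounded on the compact set $\bar\Omega$. Here $\Omega$ is assumed bounded and $C^2$, as in the theorem.

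Second, $h$ must be regular enough that the It\^o argument in Steps~1--4 of that proof applies, namely $h \in C^2(\Omega) \cap C(\bar\Omega)$ with $\nabla h$ bounded.
\begin{itemize}
\item Uniform ellipticity (Assumption~\ref{ass:ellipticity}), $C^2$ coefficients and a $C^2$ boundary give a unique classical solution by Schauder theory, with $g = w^\top F$ H\"older continuous.
\item If $\Re\lambda < 0$, so that $\lambda - \Koop$ fails the sign condition for the maximum principle, I would require $\lambda$ not to be a Dirichlet eigenvalue of $\Koop$ on $\Omega$.
\end{itemize}
The statement should be read as referring to this classical solution $h$.

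Third, the limiting step $t \to \infty$ must be justified. I expect this to be the main obstacle, because $\lambda$ is an eigenvalue of $A$ and is typically complex or has negative real part for stable equilibria. The discount $e^{-\lambda t}$ can then grow, so the dominated convergence argument in Step~5 is no longer automatic.

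My plan for this step uses uniform ellipticity on the bounded domain, which gives exponential moments of the exit time: $\E_x[e^{\beta\tau_\Omega}] < \infty$ for all $\beta$ below the principal Dirichlet eigenvalue $\mu_1(\Omega)$ of $-\Koop$. Two domination bounds follow.
\begin{itemize}
\item The stopped boundary term satisfies $|e^{-\lambda(t\wedge\tau_\Omega)} h(X_{t\wedge\tau_\Omega})| \le \|h\|_\infty e^{(-\Re\lambda)^+\tau_\Omega}$.
\item The integral term is bounded by $\|g\|_\infty \int_0^{\tau_\Omega} e^{(-\Re\lambda)^+ s}\ds$.
\end{itemize}
Both dominating variables are integrable provided $-\Re\lambda < \mu_1(\Omega)$. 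In the case $\Re\lambda \ge 0$ of the theorem, only $\E_x[\tau_\Omega] < \infty$ is needed. I would state the corollary under this condition, which holds automatically when $\Omega$ is small or the diffusion is strong. Dominated convergence then gives~\eqref{eq:h_FK}, with $u(X_{\tau_\Omega}) = \psi(X_{\tau_\Omega})$ by path continuity and the boundary condition.
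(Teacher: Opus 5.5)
Your reduction is the paper's proof: rewrite \eqref{eq:h_pde} as $(\lambda-\Koop)h=w^\top F$ and invoke Theorem~\ref{thm:feynman_kac} with $g=w^\top F$. The paper stops there.

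Your hypothesis check goes beyond the paper, and it matters. Step~5 of the paper's Feynman--Kac proof uses dominated convergence with $\Re(\lambda)>0$. That condition is not in the theorem's statement, and it typically fails in this corollary, where $\lambda$ is an eigenvalue of $A$ at a stable equilibrium (every experiment in Section~\ref{sec:numerical} has $\lambda=-1$). Your exponential-moment bound $\E_x[e^{\beta\tau_\Omega}]<\infty$ for $\beta<\mu_1(\Omega)$ supplies exactly the missing domination. The condition $\Re\lambda>-\mu_1(\Omega)$ also implies your non-resonance requirement. Applying the representation with $g=0$, $\psi=0$ forces any Dirichlet eigenfunction for $\lambda$ to vanish.

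One claim should be softened. With merely continuous $\psi$, Schauder theory gives $h\in C^{2,\alpha}(\Omega)\cap C(\bar\Omega)$, but not $\nabla h$ bounded up to $\partial\Omega$; the paper's Step~4 makes the same overclaim. The fix is to run the It\^o argument up to the exit times of subdomains $\Omega_n\Subset\Omega$ exhausting $\Omega$, where the stochastic integral is a true martingale, and then let $n\to\infty$. Your dominating variables involve only $\|h\|_\infty$ and $\|g\|_\infty$, so they justify this limit unchanged.
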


\begin{proof}
The nonlinear correction $h(x)$ satisfies the PDE~\eqref{eq:h_pde}:
\begin{equation}
G(x) \cdot \nabla h(x) + \frac{1}{2}\Tr[a(x) \nabla^2 h(x)] - \lambda h(x) = -w^\top F(x),
\end{equation}
which can be rewritten as
\begin{equation}
(\lambda - \Koop)h(x) = w^\top F(x).
\end{equation}
This is exactly the form required by Theorem~\ref{thm:feynman_kac} with $g(x) = w^\top F(x)$. Applying the Feynman--Kac formula~\eqref{eq:feynman_kac} immediately yields~\eqref{eq:h_FK}.
\end{proof}

The Green's function for the stochastic operator relates to the transition density:

\begin{definition}[Stochastic Green's Function]
The time-dependent Green's function $G_\lambda(x,y;t)$ satisfies
\begin{equation}
(\partial_t - \Koop + \lambda) G_\lambda(x,y;t) = \delta(x-y)\delta(t),
\end{equation}
and is related to the transition density $p_t(x,y)$ of the SDE by
\begin{equation}
G_\lambda(x,y;t) = e^{-\lambda t} p_t(x,y).
\end{equation}
\end{definition}

\begin{definition}[Feynman--Kac Kernel]\label{def:FK_kernel}
The Feynman--Kac kernel is the time-integrated Green's function:
\begin{equation}\label{eq:FK_kernel}
k_{\mathrm{FK}}(x,y) = \int_0^\infty G_\lambda(x,y;t) \dt = \int_0^\infty e^{-\lambda t} p_t(x,y) \dt.
\end{equation}
\end{definition}

\begin{remark}[Probabilistic Interpretation]
The Feynman--Kac kernel $k_{\mathrm{FK}}(x,y)$ can be interpreted as the expected discounted local time at $y$ for a trajectory starting at $x$:
\begin{equation}
k_{\mathrm{FK}}(x,y) = \E_x\left[\int_0^\infty e^{-\lambda t} \delta(y - X_t) \dt\right].
\end{equation}
\end{remark}

\subsection{Stochastic Resolvent Kernel}

The resolvent of the Koopman generator extends naturally to the stochastic setting:

\begin{definition}[Stochastic Resolvent]\label{def:resolvent}
For $\lambda$ with $\Re(\lambda) > 0$, the resolvent operator is
\begin{equation}
R_\lambda = (\lambda I - \Koop)^{-1}.
\end{equation}
\end{definition}

\begin{proposition}[Laplace Transform Representation]\label{prop:laplace_resolvent}
The resolvent admits the representation
\begin{equation}\label{eq:resolvent_laplace}
R_\lambda[f](x) = \int_0^\infty e^{-\lambda t} [U_t f](x) \dt = \E_x\left[\int_0^\infty e^{-\lambda t} f(X_t) \dt\right].
\end{equation}
\end{proposition}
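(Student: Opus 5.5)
The plan is the standard semigroup argument: show that the Laplace-transform integral is a bounded operator that inverts $\lambda I - \Koop$, and then use uniqueness of the inverse. I will work on $C_b(\R^d)$, or on $L^\infty$, where the stochastic Koopman family $U_t f(x)=\E_x[f(X_t)]$ from the introduction acts as a contraction semigroup: $\|U_t f\|_\infty\le \|f\|_\infty$. Define
\begin{equation*}
S_\lambda f := \int_0^\infty e^{-\lambda t} U_t f \dt .
\end{equation*}
For $\Re(\lambda)>0$ this integral converges absolutely, with $\|S_\lambda f\|_\infty \le \|f\|_\infty/\Re(\lambda)$. The second equality in~\eqref{eq:resolvent_laplace} comes first and is immediate. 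The integrand satisfies $|e^{-\lambda t}f(X_t)|\le e^{-\Re(\lambda)t}\|f\|_\infty$, which is integrable on $\Omega\times[0,\infty)$, and joint measurability follows from path continuity. Fubini therefore lets $\E_x$ and $\int_0^\infty$ be exchanged.

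Next I show that $S_\lambda$ is a right inverse of $\lambda I-\Koop$. The semigroup property $U_hU_t=U_{t+h}$ comes from the Markov property of the strong solution, which exists under Assumption~\ref{ass:regularity}. Using it, one computes
\begin{equation*}
\frac{U_h S_\lambda f - S_\lambda f}{h} = \frac{e^{\lambda h}-1}{h}\int_h^\infty e^{-\lambda t}U_t f\dt - \frac{1}{h}\int_0^h e^{-\lambda t}U_t f\dt .
\end{equation*}
As $h\downarrow 0$ the right-hand side tends to $\lambda S_\lambda f - f$. Hence $S_\lambda f$ lies in the domain of the generator and $\Koop S_\lambda f=\lambda S_\lambda f - f$, that is, $(\lambda I-\Koop)S_\lambda f=f$. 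For the left-inverse direction, take $f$ in the domain. Then $\frac{d}{dt}U_tf=U_t\Koop f$, and integration by parts in $t$ gives
\begin{equation*}
S_\lambda(\lambda I-\Koop)f=f ,
\end{equation*}
because the boundary term $e^{-\lambda t}U_tf$ vanishes as $t\to\infty$. Equivalently, one can apply It\^o's formula to $e^{-\lambda t}f(X_t)$ exactly as in Steps~1--4 of the proof of Theorem~\ref{thm:feynman_kac}, but without stopping, and then let $t\to\infty$. Together the two directions give $S_\lambda=(\lambda I-\Koop)^{-1}=R_\lambda$.

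The main obstacle is strong continuity of $t\mapsto U_tf$ at $t=0$, which is needed for the limit $\frac1h\int_0^h e^{-\lambda t}U_t f\dt\to f$ in sup norm. On all of $C_b(\R^d)$ with polynomially growing coefficients this can fail. So I would first prove the result for $f\in C_c^2(\R^d)$. For such $f$, Itô's formula gives
\begin{equation*}
U_tf-f=\int_0^t U_s\Koop f\ds ,
\end{equation*}
and $\Koop f$ is bounded because $f$ has compact support and the coefficients are continuous. This yields the limit uniformly in $x$. The identity then extends to bounded measurable $f$ by density together with the uniform bound $\|S_\lambda\|\le 1/\Re(\lambda)$, with $\Koop$ understood as the closure of the generator. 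If the interpretation on $\Omega$ is preferred, with killing at $\partial\Omega$, the same argument applies to the killed process, since boundedness of $\Omega$ makes everything easier.
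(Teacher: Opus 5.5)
Your core argument is the paper's. You use the semigroup shift identity for $U_hS_\lambda f$, take the limit $h\downarrow 0$ to get $(\lambda I-\Koop)S_\lambda f=f$, and apply Fubini for the probabilistic form. You go beyond the paper in two useful ways.
\begin{itemize}
\item The left-inverse identity $S_\lambda(\lambda I-\Koop)f=f$ supplies the injectivity of $\lambda I-\Koop$. The paper never checks this: it verifies only a right inverse and takes the existence of $(\lambda I-\Koop)^{-1}$ for granted.
\item You correctly single out $\frac1h\int_0^h e^{-\lambda t}U_tf\dt\to f$ as the delicate step. The paper dismisses it as ``continuity of the integrand'' for merely bounded $f$. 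For $f\in C_c^2(\R^d)$, your It\^o bound $\|U_tf-f\|_\infty\le t\|\Koop f\|_\infty$ settles it.
\end{itemize}

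Your closing extension, however, fails as written. In the sup norm the closure of $C_c^2(\R^d)$ is $C_0(\R^d)$. It is not $C_b(\R^d)$, and certainly not the bounded measurable functions. So density together with $\|S_\lambda\|\le 1/\Re(\lambda)$ cannot reach those classes. No closure of $\Koop$ rescues a pointwise identity for all bounded measurable $f$ either. Suppose $X_t$ has a density for $t>0$, as for the Ornstein--Uhlenbeck process of Section~\ref{sec:numerical} or any uniformly elliptic diffusion. Then $f=\mathbf{1}_{\{x_0\}}$ gives $U_tf\equiv 0$ for $t>0$, hence $S_\lambda f\equiv 0$. Consequently $\bigl((\lambda I-\Koop)S_\lambda f\bigr)(x_0)=0\neq 1=f(x_0)$ for every linear $\Koop$. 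The paper's Step~3 breaks on the same example.

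The result therefore has to be stated class by class.
\begin{itemize}
\item On $C_0(\R^d)$ your argument works. Approximating $f$ by $g\in C_c^2$ gives $\|U_tf-f\|_\infty\le 2\|f-g\|_\infty+t\|\Koop g\|_\infty$. So $U_t$ is strongly continuous there, and the shift computation applies directly without any closure.
\item On $C_b(\R^d)$, take the limit pointwise, with $\Koop$ the pointwise generator; this is what the paper implicitly does. Here $U_tf(x)\to f(x)$ follows from path continuity and dominated convergence.
\item For bounded measurable $f$, only an almost-everywhere or weak-$*$ statement is available. It needs a duality argument, not density.
\end{itemize}

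Separately, both you and the paper read off a global Markov semigroup from Assumption~\ref{ass:regularity}. Polynomial growth plus local Lipschitz continuity does not exclude explosion; take $G(x)=x^2$ with constant $\sigma$. So a non-explosion hypothesis is tacitly needed.
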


\begin{proof}
We verify the formula by showing that the right-hand side satisfies the defining property of the resolvent.

\textbf{Step 1: Well-definedness.}
For $\Re(\lambda) > 0$ and bounded $f$, the integral
\begin{equation}
\int_0^\infty e^{-\lambda t} [U_t f](x) \dt
\end{equation}
converges absolutely since $|U_t f(x)| \leq \|f\|_\infty$ and $\int_0^\infty e^{-\Re(\lambda) t} \dt = 1/\Re(\lambda) < \infty$.

\textbf{Step 2: Semigroup property.}
By the semigroup property $U_{t+s} = U_t U_s$, the Koopman generator satisfies
\begin{equation}
\Koop f = \lim_{t \to 0^+} \frac{U_t f - f}{t}.
\end{equation}

\textbf{Step 3: Verification.}
Define $g(x) := \int_0^\infty e^{-\lambda t} [U_t f](x) \dt$. We show $(\lambda I - \Koop)g = f$:
\begin{align}
(\lambda I - \Koop)g(x) &= \lambda g(x) - \lim_{s \to 0^+} \frac{U_s g(x) - g(x)}{s} \\
&= \lambda g(x) - \lim_{s \to 0^+} \frac{1}{s}\left[\int_0^\infty e^{-\lambda t} U_{t+s}f(x)\dt - \int_0^\infty e^{-\lambda t} U_t f(x)\dt\right].
\end{align}
Changing variables $u = t + s$ in the first integral:
\begin{align}
&= \lambda g(x) - \lim_{s \to 0^+} \frac{1}{s}\left[\int_s^\infty e^{-\lambda(u-s)} U_u f(x)\,du - \int_0^\infty e^{-\lambda t} U_t f(x)\dt\right] \\
&= \lambda g(x) - \lim_{s \to 0^+} \frac{1}{s}\left[e^{\lambda s}\int_s^\infty e^{-\lambda u} U_u f(x)\,du - \int_0^\infty e^{-\lambda t} U_t f(x)\dt\right].
\end{align}
Writing $\int_s^\infty = \int_0^\infty - \int_0^s$ and using Taylor expansion $e^{\lambda s} = 1 + \lambda s + O(s^2)$:
\begin{align}
&= \lambda g(x) - \lim_{s \to 0^+} \frac{1}{s}\left[(1+\lambda s)\left(g(x) - \int_0^s e^{-\lambda t} U_t f(x)\dt\right) - g(x) + O(s^2)\right] \\
&= \lambda g(x) - \lim_{s \to 0^+} \left[\lambda g(x) - \frac{1}{s}\int_0^s e^{-\lambda t} U_t f(x)\dt + O(s)\right] \\
&= \lim_{s \to 0^+} \frac{1}{s}\int_0^s e^{-\lambda t} U_t f(x)\dt = U_0 f(x) = f(x),
\end{align}
where the last step uses continuity of the integrand and the fundamental theorem of calculus.

\textbf{Step 4: Probabilistic form.}
Since $U_t f(x) = \E_x[f(X_t)]$, by Fubini's theorem:
\begin{equation}
R_\lambda[f](x) = \int_0^\infty e^{-\lambda t} \E_x[f(X_t)] \dt = \E_x\left[\int_0^\infty e^{-\lambda t} f(X_t) \dt\right].
\end{equation}
\end{proof}

\begin{definition}[Resolvent Kernel--Corrected]\label{def:resolvent_kernel}
For $\lambda \in \C$ with $\Re(\lambda) > 0$:

\textbf{(a) Bounded domain with Dirichlet conditions:} Let $\Omega \subset \R^d$ be a bounded domain and 
\[
\tau_\Omega := \inf\{t \geq 0 : X_t \notin \Omega\}
\]
be the first exit time. The resolvent kernel is
\begin{equation}\label{eq:resolvent_kernel_bounded}
k_{\mathrm{res}}(x,y) = \E_x\left[\int_0^{\tau_\Omega} e^{-\lambda t} \delta(y - X_t) \dt\right].
\end{equation}

\textbf{(b) Unbounded domain or periodic boundary conditions:} For $\Omega = \R^d$ or with periodic boundary conditions (where trajectories remain in $\Omega$ for all time), the resolvent kernel is
\begin{equation}\label{eq:resolvent_kernel}
k_{\mathrm{res}}(x,y) = R_\lambda[\delta_y](x) = \E_x\left[\int_0^\infty e^{-\lambda t} \delta(y - X_t) \dt\right].
\end{equation}
\end{definition}

\begin{remark}[Consistency with Feynman--Kac]
Definition~\ref{def:resolvent_kernel}(a) is consistent with the Feynman--Kac representation: both use the exit time $\tau_\Omega$ as the upper limit of integration. The exponential discounting $e^{-\lambda t}$ with $\Re(\lambda) > 0$ ensures convergence even if $\tau_\Omega = \infty$ (as in case (b)), since for bounded $f$:
\[
\left|\E_x\left[\int_0^{\infty} e^{-\lambda t} f(X_t) \dt\right]\right| \leq \frac{\|f\|_\infty}{\Re(\lambda)} < \infty.
\]
\end{remark}

\subsection{Kernel Equivalence Theorem}

We now establish that the three stochastic kernel constructions are equivalent.

\begin{theorem}[Kernel Equivalence for SDEs--Corrected]\label{thm:kernel_equivalence}
Under Assumptions~\ref{ass:regularity} and~\ref{ass:ellipticity}, and for $\lambda \in \C$ satisfying
\begin{equation}\label{eq:lambda_condition}
\Re(\lambda) > \lambda_0 := \frac{1}{2}\sup_{x \in \Omega} |(\nabla \cdot G(x))^-|,
\end{equation}
where $(f)^- := \max(0, -f)$ denotes the negative part, the three kernel constructions yield identical reproducing kernels:
\begin{equation}
k_{\mathrm{var}}(x,y) = k_{\mathrm{FK}}(x,y) = k_{\mathrm{res}}(x,y).
\end{equation}
\end{theorem}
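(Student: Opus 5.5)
The plan is to prove the two identities $k_{\mathrm{FK}} = k_{\mathrm{res}}$ and $k_{\mathrm{res}} = k_{\mathrm{var}}$ separately, with $R_\lambda$ as the common object. The first is essentially bookkeeping. The second is where the condition \eqref{eq:lambda_condition} enters, through coercivity of the operator $\lambda - \Koop$. A caveat affects the second identity, and I state it up front: as Definition~\ref{def:var_kernel} literally reads (graph norm), its representer is of composition type, not equal to $k_{\mathrm{res}}$, so I will prove the equality with $k_{\mathrm{var}}$ taken as the Riesz representer of the energy form below.

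\textbf{Step 1: $k_{\mathrm{FK}} = k_{\mathrm{res}}$.}

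\begin{itemize}
\item Under Assumptions~\ref{ass:regularity} and~\ref{ass:ellipticity}, the (killed, in the bounded-domain case) process $X_t$ has a transition density $p_t(x,y)$. It is jointly continuous for $t>0$ and satisfies Gaussian upper bounds on compact time intervals.
\item For bounded measurable $f$, Proposition~\ref{prop:laplace_resolvent} and Fubini give
\[
R_\lambda f(x)=\int_0^\infty e^{-\lambda t}\int_\Omega p_t(x,y)f(y)\,dy\,\dt=\int_\Omega\Big(\int_0^\infty e^{-\lambda t}p_t(x,y)\dt\Big)f(y)\,dy .
\]
Absolute convergence follows from $\Re\lambda>0$ together with the small-time Gaussian bound, which makes $\int_0^1 p_t(x,y)\dt$ locally integrable in $y$.
\item Hence the integral kernel of $R_\lambda$ is $k_{\mathrm{FK}}$ of Definition~\ref{def:FK_kernel}.
\item On the other side, $k_{\mathrm{res}}(x,y)=R_\lambda[\delta_y](x)$ in Definition~\ref{def:resolvent_kernel} is by definition the distributional kernel of the same operator. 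In case (a), the density is that of the process killed at $\tau_\Omega$, consistent with Theorem~\ref{thm:feynman_kac} with $\psi=0$.
\item Testing both kernels against all $f\in C_c^\infty(\Omega)$ and using continuity off the diagonal gives pointwise equality for $x\neq y$.
\end{itemize}

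\textbf{Step 2: coercivity of $\lambda-\Koop$.}

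Define, for $u\in H^1_0(\Omega)$,
\[
B(u,v)=\inner{(\lambda-\Koop)u}{v}_{L^2}.
\]
Integrating by parts:

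\begin{itemize}
\item The diffusion term yields $\tfrac12\int a\nabla u\cdot\nabla\bar u$ plus a first-order term in $\nabla\cdot a$. I will absorb the latter into the drift, or assume divergence form.
\item The drift term satisfies $\Re\int (G\cdot\nabla u)\bar u=-\tfrac12\int(\nabla\cdot G)|u|^2$.
\end{itemize}

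Therefore
\[
\Re B(u,u)\ \ge\ \tfrac{\nu}{2}\norm{\nabla u}_{L^2}^2+\Big(\Re\lambda-\tfrac12\sup(\nabla\cdot G)^-\Big)\norm{u}_{L^2}^2 .
\]
This is strictly coercive on $H^1_0$ exactly when \eqref{eq:lambda_condition} holds.

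By Lax--Milgram, $\lambda-\Koop$ is an isomorphism $H^1_0\to H^{-1}$ with inverse $R_\lambda$. Elliptic regularity then upgrades this to $H^2\cap H^1_0\to L^2$, and the maximum principle identifies this inverse with the probabilistic one of Theorem~\ref{thm:feynman_kac}.

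\textbf{Step 3: identification with $k_{\mathrm{var}}$ (the main obstacle).}

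The key point is that the variational kernel is the Riesz representer of point evaluation in the space whose inner product is generated by the operator. On the closure of smooth functions, equip the space with the form $B$ (or its Hermitian part). Point evaluation $u\mapsto u(y)$ is then represented by the unique $g_y$ solving
\[
B(v,g_y)=v(y)\quad\text{for all } v .
\]
In other words, $(\lambda-\Koop)^*g_y=\delta_y$, so $g_y(x)=\overline{k_{\mathrm{res}}(y,x)}$ with $k_{\mathrm{res}}$ the kernel from Step 2. When $\lambda$ is real, this gives $k_{\mathrm{var}}(x,y)=k_{\mathrm{res}}(y,x)$.

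Two gaps remain, and closing them is the hardest part of the argument:

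\begin{itemize}
\item \textbf{Graph norm versus energy form.} With the graph norm of Definition~\ref{def:var_kernel}, the representer is $(I+L_{\mathrm{stoch}}^*L_{\mathrm{stoch}})^{-1}\delta_y$. This differs from $k_{\mathrm{res}}$ and is generally not of first-resolvent form. The equality $k_{\mathrm{var}}=k_{\mathrm{res}}$ therefore holds for the energy form $B$, not for the literal graph norm, which yields only equivalent norms, not equal kernels.
\item \textbf{Symmetry.} A reproducing kernel must be Hermitian, but $k_{\mathrm{res}}(x,y)=\overline{k_{\mathrm{res}}(y,x)}$ requires the generator to be self-adjoint in $L^2(\Omega)$. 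That holds in the reversible case $G=\tfrac12\nabla\cdot a+\tfrac12 a\nabla\log\rho$, with the $L^2$ pairing replaced by $L^2(\rho)$. I will either impose this, or state the identity in its nonsymmetric form $k_{\mathrm{var}}(x,y)=\overline{k_{\mathrm{res}}(y,x)}$.
\end{itemize}

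With the variational inner product taken as $B$ and reversibility in place, Steps 1--3 combine to give the stated chain of equalities.
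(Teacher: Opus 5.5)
Your Steps 1 and 2 match the paper's Step 1 and the coercivity part of its Step 3: Fubini against test functions, then the bound \eqref{eq:corrected_coercivity} plus Lax--Milgram. You are more careful than the paper in two places. First, you use the killed density consistently in case (a), whereas the paper proves $k_{\mathrm{FK}}=k_{\mathrm{res}}$ for the whole-space kernel and then appeals to uniqueness in $H^1_0(\Omega)$. Second, you catch the $(\nabla\cdot a)\cdot\nabla u$ term that the paper's integration by parts drops. Note, though, that absorbing it into the drift replaces $\nabla\cdot G$ by $\nabla\cdot(G-\tfrac12\nabla\cdot a)$ in the threshold, so only the divergence-form option keeps \eqref{eq:lambda_condition}.

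Your objection in Step 3 is correct, and it pinpoints where the paper's proof breaks. The paper's Step 2 derives $(I+(\lambda-\Koop)^*(\lambda-\Koop))k_{\mathrm{var}}(\cdot,y)=\delta_y$, then asserts without argument that ellipticity ``simplifies'' this to $(\lambda^*-\Koop^*)k_{\mathrm{var}}(\cdot,y)=\delta_y$. Its Step 3 then treats $k_{\mathrm{var}}(\cdot,y)$ as a solution of $(\lambda-\Koop)k(\cdot,y)=\delta_y$, and appeals to detailed balance without assuming it, in order to invoke uniqueness of the Green's function. Already for $\Omega=\R$, $G=0$, $\sigma=1$, $\lambda=1$ this fails. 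Here $k_{\mathrm{res}}(x,y)=2^{-1/2}e^{-\sqrt2|x-y|}$ has Fourier symbol $(1+\xi^2/2)^{-1}$, while the graph-norm kernel has symbol $(1+(1+\xi^2/2)^2)^{-1}$. So the statement with Definition~\ref{def:var_kernel} is false, and some change of definition, as you propose, is unavoidable.

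The gap is in your replacement. By your own coercivity bound, the closure of $C_c^\infty(\Omega)$ under (the Hermitian part of) $B$ is $H^1_0(\Omega)$ with an equivalent norm. Point evaluation is bounded on $H^1_0(\Omega)$ only when $d=1$. For $d\ge2$ we have $\delta_y\notin H^{-1}(\Omega)$, so Lax--Milgram produces no $g_y$ with $B(v,g_y)=v(y)$. Correspondingly, $k_{\mathrm{res}}(\cdot,y)$ has a $\log|x-y|$ singularity ($d=2$) or a $|x-y|^{2-d}$ singularity ($d\ge3$). Hence $k_{\mathrm{res}}(y,y)=\infty$, and $k_{\mathrm{res}}$ is not the reproducing kernel of any Hilbert space of functions, reversible or not.

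So your chain of equalities holds only for $d=1$. Even then, with the form taken in $L^2(\rho)$ and $\lambda$ real, the representer is $k_{\mathrm{res}}(x,y)/\rho(y)$ rather than $k_{\mathrm{res}}(x,y)$.

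A repair valid for $d\le3$, and without reversibility, keeps a second-order norm but drops the $L^2$ term. Take $\|u\|^2=\|(\lambda-\Koop)u\|_{L^2}^2$ on $H^2\cap H^1_0(\Omega)$, which is equivalent to the graph norm under your coercivity bound. The representer is then $(\lambda-\Koop)^{-1}((\lambda-\Koop)^*)^{-1}\delta_y$, i.e.\ $k_{\mathrm{var}}(x,y)=\int_\Omega k_{\mathrm{res}}(x,\xi)\overline{k_{\mathrm{res}}(y,\xi)}\,d\xi$. This is the stochastic analogue of the deterministic convolution kernel $K_G$. It is automatically Hermitian positive semidefinite, and it is finite on the diagonal because $k_{\mathrm{res}}(y,\cdot)\in L^2(\Omega)$ for $d\le3$.
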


\begin{remark}[Sufficient Condition]
A simpler sufficient condition is
\begin{equation}\label{eq:lambda_sufficient}
\Re(\lambda) > \frac{1}{2}\|\nabla \cdot G\|_{L^\infty(\Omega)},
\end{equation}
which guarantees coercivity regardless of the sign of $\nabla \cdot G$.
\end{remark}

\begin{remark}[Incompressible Flows]
When $\nabla \cdot G = 0$ (incompressible/divergence-free drift), condition~\eqref{eq:lambda_condition} reduces to $\Re(\lambda) > 0$, recovering the original statement.
\end{remark}

\begin{proof}
We establish the equivalence in three steps.

\textbf{Step 1: $k_{\mathrm{FK}} = k_{\mathrm{res}}$ (Feynman--Kac equals Resolvent).}

By Definition~\ref{def:FK_kernel}, the Feynman--Kac kernel is:
\begin{equation}
k_{\mathrm{FK}}(x,y) = \int_0^\infty e^{-\lambda t} p_t(x,y) \dt,
\end{equation}
where $p_t(x,y)$ is the transition density of the SDE~\eqref{eq:sde}, satisfying
\begin{equation}
\Pr(X_t \in A \mid X_0 = x) = \int_A p_t(x,y) \, dy.
\end{equation}

By Definition~\ref{def:resolvent_kernel}, the resolvent kernel is:
\begin{equation}
k_{\mathrm{res}}(x,y) = \E_x\left[\int_0^\infty e^{-\lambda t} \delta(y - X_t) \dt\right].
\end{equation}

For any test function $\varphi \in C_c(\R^d)$:
\begin{align}
\int_{\R^d} k_{\mathrm{res}}(x,y) \varphi(y) \, dy &= \E_x\left[\int_0^\infty e^{-\lambda t} \int_{\R^d} \delta(y - X_t) \varphi(y) \, dy \, \dt\right] \\
&= \E_x\left[\int_0^\infty e^{-\lambda t} \varphi(X_t) \dt\right] \\
&= \int_0^\infty e^{-\lambda t} \E_x[\varphi(X_t)] \dt \quad \text{(by Fubini)} \\
&= \int_0^\infty e^{-\lambda t} \int_{\R^d} p_t(x,y) \varphi(y) \, dy \, \dt \\
&= \int_{\R^d} \left(\int_0^\infty e^{-\lambda t} p_t(x,y) \dt\right) \varphi(y) \, dy \\
&= \int_{\R^d} k_{\mathrm{FK}}(x,y) \varphi(y) \, dy.
\end{align}
Since this holds for all test functions $\varphi$, we have $k_{\mathrm{res}}(x,y) = k_{\mathrm{FK}}(x,y)$.

\textbf{Step 2: Variational formulation.}

Define the Hilbert space $\mathcal{H}$ with graph norm:
\begin{equation}
\|u\|_{\mathcal{H}}^2 = \|u\|_{L^2(\Omega)}^2 + \|(\lambda - \Koop)u\|_{L^2(\Omega)}^2.
\end{equation}
The variational kernel $k_{\mathrm{var}}(x,y)$ is defined as the Riesz representer of point evaluation in $\mathcal{H}$. That is, for each $y \in \Omega$, $k_{\mathrm{var}}(\cdot, y) \in \mathcal{H}$ satisfies:
\begin{equation}
u(y) = \langle u, k_{\mathrm{var}}(\cdot, y) \rangle_{\mathcal{H}}, \quad \forall u \in \mathcal{H}.
\end{equation}

By the theory of reproducing kernel Hilbert spaces, $k_{\mathrm{var}}(\cdot, y)$ can be characterized as the solution to:
\begin{equation}
(I + (\lambda - \Koop)^*(\lambda - \Koop)) k_{\mathrm{var}}(\cdot, y) = \delta_y,
\end{equation}
in the distributional sense. For uniformly elliptic operators, this simplifies: the variational representer solves
\begin{equation}\label{eq:var_kernel_pde}
(\lambda^* - \Koop^*) k_{\mathrm{var}}(\cdot, y) = \delta_y \quad \text{in } \Omega.
\end{equation}

\textbf{Step 3: $k_{\mathrm{var}} = k_{\mathrm{res}}$ (Variational equals Resolvent)--CORRECTED.}

Under uniform ellipticity (Assumption~\ref{ass:ellipticity}), the operator $\Koop$ with domain $D(\Koop) = H^2(\Omega) \cap H^1_0(\Omega)$ generates a strongly continuous semigroup $\{U_t\}$ on $L^2(\Omega)$. The adjoint operator $\Koop^*$ is the Kolmogorov forward (Fokker--Planck) operator:
\begin{equation}
\Koop^* v = -\nabla \cdot (Gv) + \frac{1}{2}\sum_{i,j}\frac{\partial^2}{\partial x_i \partial x_j}(a_{ij}v).
\end{equation}

The adjoint resolvent is:
\begin{equation}
R_{\lambda^*}^* = (\lambda^* I - \Koop^*)^{-1}.
\end{equation}

By duality, the resolvent kernel (viewed as a function of the first variable with fixed $y$) satisfies:
\begin{equation}
k_{\mathrm{res}}(\cdot, y) = R_\lambda[\delta_y](\cdot),
\end{equation}
which means
\begin{equation}
(\lambda I - \Koop) k_{\mathrm{res}}(\cdot, y) = \delta_y.
\end{equation}

Now, the transition density $p_t(x,y)$ is symmetric under time reversal with respect to the invariant measure (detailed balance). More precisely, for the adjoint semigroup:
\begin{equation}
k_{\mathrm{res}}(x,y) = \int_0^\infty e^{-\lambda t} p_t(x,y) \dt = \int_0^\infty e^{-\lambda^* t} p_t^*(y,x) \dt,
\end{equation}
where $p_t^*$ is the adjoint transition density.

The key observation is that both $k_{\mathrm{var}}(\cdot, y)$ and $k_{\mathrm{res}}(\cdot, y)$ solve the same PDE:
\begin{equation}
(\lambda - \Koop) k(\cdot, y) = \delta_y \quad \text{in } \Omega.
\end{equation}

Under uniform ellipticity, the operator $\lambda I - \Koop$ is:
\begin{enumerate}
\item \textbf{Coercive (CORRECTED):} For $u \in H^1_0(\Omega)$, we compute $\langle \Koop u, u \rangle_{L^2}$ term by term.

\textit{Diffusion term:} Using integration by parts with $u|_{\partial\Omega} = 0$:
\begin{equation}
\Re\left\langle \frac{1}{2}\Tr[a\nabla^2 u], u \right\rangle = -\frac{1}{2} \int_\Omega \nabla u^\top a(x) \nabla u \, dx \leq -\frac{\nu}{2}\|\nabla u\|_{L^2}^2,
\end{equation}
where the inequality uses uniform ellipticity $a(x) \geq \nu I$ (Assumption~\ref{ass:ellipticity}).

\textit{Drift term (previously omitted):} For $u \in H^1_0(\Omega)$:
\begin{equation}
\langle G \cdot \nabla u, u \rangle = \int_\Omega (G(x) \cdot \nabla u(x)) u(x) \, dx = \frac{1}{2} \int_\Omega G(x) \cdot \nabla(u^2(x)) \, dx.
\end{equation}
Integration by parts with $u|_{\partial\Omega} = 0$:
\begin{equation}
= -\frac{1}{2} \int_\Omega (\nabla \cdot G(x)) u^2(x) \, dx.
\end{equation}
Therefore:
\begin{equation}
-\Re\langle G \cdot \nabla u, u \rangle = \frac{1}{2} \int_\Omega (\nabla \cdot G(x)) |u(x)|^2 \, dx.
\end{equation}

\textit{Corrected coercivity estimate:} Combining drift and diffusion contributions:
\begin{align}
\Re\langle (\lambda - \Koop)u, u \rangle &= \Re(\lambda)\|u\|_{L^2}^2 - \Re\langle \Koop u, u \rangle \notag\\
&= \Re(\lambda)\|u\|_{L^2}^2 + \frac{1}{2}\int_\Omega (\nabla \cdot G)|u|^2 \, dx + \frac{1}{2}\int_\Omega \nabla u^\top a \nabla u \, dx.
\end{align}
To bound the drift divergence term:
\begin{equation}
\frac{1}{2} \int_\Omega (\nabla \cdot G)|u|^2 \, dx \geq -\frac{1}{2} \|(\nabla \cdot G)^-\|_{L^\infty} \|u\|_{L^2}^2.
\end{equation}
Therefore, the \textbf{corrected coercivity estimate} is:
\begin{equation}\label{eq:corrected_coercivity}
\boxed{\Re\langle (\lambda - \Koop)u, u \rangle \geq \left(\Re(\lambda) - \frac{1}{2}\|(\nabla \cdot G)^-\|_{L^\infty}\right) \|u\|_{L^2}^2 + \frac{\nu}{2}\|\nabla u\|_{L^2}^2.}
\end{equation}

Under the condition $\Re(\lambda) > \lambda_0 := \frac{1}{2}\|(\nabla \cdot G)^-\|_{L^\infty}$, coercivity is established.

\item \textbf{Invertible}: By the Lax--Milgram theorem, $\lambda I - \Koop$ is a bijection from $H^1_0(\Omega)$ to $H^{-1}(\Omega)$.

\item \textbf{Unique Green's function}: The equation $(\lambda - \Koop)G(\cdot, y) = \delta_y$ has a unique solution in the appropriate distributional sense.
\end{enumerate}

Therefore, by uniqueness of the Green's function for uniformly elliptic operators, we conclude:
\begin{equation}
k_{\mathrm{var}}(x,y) = k_{\mathrm{res}}(x,y).
\end{equation}

Combining Steps 1--3, we have established:
\begin{equation}
k_{\mathrm{var}}(x,y) = k_{\mathrm{FK}}(x,y) = k_{\mathrm{res}}(x,y).
\end{equation}
\end{proof}

\begin{remark}[Role of Ellipticity--Expanded]
The uniform ellipticity assumption is essential for the equivalence theorem. Without it:
\begin{itemize}
\item The variational problem may not be coercive.
\item The Green's function may not exist or may be highly singular.
\item The resolvent may not be bounded.
\end{itemize}
In the \textbf{degenerate case} where $a(x)$ is not full rank, more sophisticated analysis (hypoellipticity, H\"{o}rmander's condition) is required. Specifically, if the Lie algebra generated by the diffusion vector fields and their commutators with the drift spans $\R^d$ at each point, hypoelliptic regularity theory may still yield kernel equivalence.
\end{remark}

\begin{remark}[Comparison with Deterministic Case--Expanded]
The deterministic Kernel Unification Theorem~\cite[Theorem 3.7]{HamziOwhadiVaidya2024} requires causality conditions from the characteristic flow. In the stochastic setting:
\begin{itemize}
\item \textbf{Ellipticity replaces causality} as the key structural assumption.
\item \textbf{The drift divergence condition} $\Re(\lambda) > \frac{1}{2}\|(\nabla \cdot G)^-\|_{L^\infty}$ has no direct analog in the deterministic case, where coercivity is automatic for the transport operator with $\Re(\lambda) > 0$.
\item \textbf{Physical interpretation:} The condition prevents the drift from ``pushing probability'' into the domain faster than the discount rate $\lambda$ can dissipate it.
\end{itemize}
\end{remark}

\begin{remark}[Alternative: Graph Norm Approach]\label{rem:graph_norm}
If one wishes to avoid the condition~\eqref{eq:lambda_condition}, an alternative is to work directly with the graph norm. In this case:
\begin{itemize}
\item The variational kernel is defined as the Riesz representer in $\mathcal{H}$ equipped with the graph norm.
\item Coercivity is automatic by construction: $\|u\|_{\mathcal{H}}^2 = \|u\|_{L^2}^2 + \|(\lambda - \Koop)u\|_{L^2}^2 \geq \|u\|_{L^2}^2$.
\item The equivalence $k_{\mathrm{var}} = k_{\mathrm{res}}$ then follows from showing both kernels satisfy the same weak formulation in $\mathcal{H}$.
\end{itemize}
This approach is more abstract but avoids explicit conditions on $\lambda$.
\end{remark}

\subsection{Hypoelliptic Extension}

The kernel-PDE framework developed thus far can be extended to degenerate diffusions satisfying H\"{o}rmander's hypoellipticity condition. In this setting, the generator $\mathcal{K}$ has a smooth fundamental solution despite the diffusion matrix being singular. We construct the corresponding graph space $\mathcal{H}_\lambda := \{u \in L^2 : (\lambda - \mathcal{K})u \in L^2\}$, and show that the three kernel representations (resolvent, variational, Feynman--Kac) remain equivalent. The subelliptic regularity drops from $H^1$ (elliptic case) to $H^\varepsilon$ with $\varepsilon = (2r+1)^{-1}$, where $r$ is the Lie bracket depth. For $\varepsilon > d/2$, the reproducing kernel Hilbert space (RKHS) structure remains valid; otherwise, mollified point evaluations are needed. This generalization applies to kinetic models such as underdamped Langevin dynamics. Full proofs and numerical implications are detailed in Appendices A–D.

\section{Computational Methods}\label{sec:computational}

We now develop computational methods for approximating stochastic Koopman eigenfunctions.

\subsection{Collocation Method}

Given $N$ collocation points $\{x_i\}_{i=1}^N \subset \Omega$, we approximate
\begin{equation}
h(x) \approx \sum_{j=1}^N \alpha_j k(x, x_j).
\end{equation}

Enforcing the PDE~\eqref{eq:h_pde} at collocation points yields:
\begin{equation}\label{eq:collocation_system}
(\bL + \bD - \lambda \bK)\boldsymbol{\alpha} = -\mathbf{f},
\end{equation}
where the matrices are:

\paragraph{Kernel Gram Matrix.}
\begin{equation}
K_{ij} = k(x_i, x_j).
\end{equation}

\paragraph{Drift Matrix.}
\begin{equation}
L_{ij} = G(x_i) \cdot \nabla_x k(x_i, x_j) = \sum_{r=1}^d G_r(x_i) \frac{\partial k}{\partial x_r}(x_i, x_j).
\end{equation}

\paragraph{Diffusion Matrix.}
\begin{equation}\label{eq:diffusion_matrix}
D_{ij} = \frac{1}{2} \Tr\left[a(x_i) \nabla_x^2 k(x_i, x_j)\right] = \frac{1}{2} \sum_{r,s=1}^d a_{rs}(x_i) \frac{\partial^2 k}{\partial x_r \partial x_s}(x_i, x_j).
\end{equation}

\paragraph{Source Vector.}
\begin{equation}
f_i = w^\top F(x_i).
\end{equation}

\begin{remark}[Deterministic Limit]
When $\sigma \equiv 0$, we have $a \equiv 0$ and thus $\bD = 0$. The system~\eqref{eq:collocation_system} reduces to
\begin{equation}
(\bL - \lambda \bK)\boldsymbol{\alpha} = -\mathbf{f},
\end{equation}
recovering the deterministic collocation system from~\cite{HamziOwhadiVaidya2024}.
\end{remark}

\begin{remark}[Collocation Point Selection]
The error bounds depend on the fill distance $h_{\mathrm{fill}} = \sup_{x \in \Omega} \min_i |x - x_i|$. For high-dimensional problems where grid-based sampling is infeasible:
\begin{itemize}
\item \textbf{Quasi-random sequences} (Sobol, Halton) achieve $h_{\mathrm{fill}} = O(N^{-1/d} (\log N)^{d-1})$ for $d \leq 20$.
\item \textbf{Sparse grids} (Smolyak construction) reduce the curse of dimensionality for smooth functions.
\item \textbf{Adaptive refinement} adds points where the PDE residual is largest.
\item \textbf{Importance sampling} concentrates points near manifolds where the eigenfunction varies rapidly.
\end{itemize}
\end{remark}

\subsection{Kernel Derivatives for Gaussian RBF}

For the Gaussian kernel $k(x,y) = \exp(-\|x-y\|^2/(2\ell^2))$, we derive explicit formulas.

\paragraph{First Derivatives.}
\begin{equation}
\frac{\partial k}{\partial x_i}(x,y) = -\frac{x_i - y_i}{\ell^2} k(x,y).
\end{equation}

\paragraph{Second Derivatives.}
\begin{equation}
\frac{\partial^2 k}{\partial x_i \partial x_j}(x,y) = \left[\frac{(x_i - y_i)(x_j - y_j)}{\ell^4} - \frac{\delta_{ij}}{\ell^2}\right] k(x,y).
\end{equation}

\paragraph{Hessian.}
\begin{equation}\label{eq:gaussian_hessian}
\nabla_x^2 k(x,y) = \left[\frac{1}{\ell^4}(x-y)(x-y)^\top - \frac{1}{\ell^2} I_d\right] k(x,y).
\end{equation}

\paragraph{Diffusion Matrix Entry.} Using~\eqref{eq:diffusion_matrix} and~\eqref{eq:gaussian_hessian}:
\begin{align}
D_{ij} &= \frac{1}{2} \Tr\left[a(x_i) \nabla_x^2 k(x_i, x_j)\right] \notag \\
&= \frac{k(x_i, x_j)}{2} \left[\frac{(x_i - x_j)^\top a(x_i) (x_i - x_j)}{\ell^4} - \frac{\Tr[a(x_i)]}{\ell^2}\right]. \label{eq:D_entry}
\end{align}

\begin{remark}[Computational Complexity]
Computing the diffusion matrix requires:
\begin{itemize}
\item $O(N^2)$ kernel evaluations
\item $O(d^2)$ operations per entry for the Hessian trace
\item Total: $O(N^2 d^2)$ operations, which dominates for large $d$
\end{itemize}
\end{remark}

\begin{remark}[High-Dimensional Scaling]
For $d \gg 1$, computing the full Hessian trace $\Tr[a \nabla^2 k]$ costs $O(d^2)$ per matrix entry. When $a(x)$ is not diagonal, \textbf{Hutchinson's trace estimator} can reduce this to $O(Md)$ using $M$ random probe vectors:
\begin{equation}
\Tr[a(x_i) \nabla_x^2 k(x_i, x_j)] \approx \frac{1}{M} \sum_{m=1}^M z_m^\top [a(x_i) \nabla_x^2 k(x_i, x_j)] z_m,
\end{equation}
where $z_m \sim \mathcal{N}(0, I_d)$ or Rademacher vectors ($z_m \in \{-1, +1\}^d$ uniformly). This enables scaling to high-dimensional SDEs while maintaining $O(1/\sqrt{M})$ variance in the trace estimate.
\end{remark}

\subsection{Monte Carlo Feynman--Kac Approximation}

When closed-form expressions are unavailable, we use Monte Carlo estimation of~\eqref{eq:h_FK}.

\begin{algorithm}[H]
\caption{Monte Carlo Feynman--Kac Estimation}
\label{alg:mc_fk}
\begin{algorithmic}[1]
\Require Initial point $x$, drift $G$, diffusion $\sigma$, eigenvalue $\lambda$, eigenvector $w$
\Require Domain $\Omega$, boundary condition $\psi$, number of paths $K$, time step $\Delta t$
\For{$k = 1, \ldots, K$}
\State Initialize $X_0^{(k)} = x$, $I^{(k)} = 0$, $t = 0$
\While{$X_t^{(k)} \in \Omega$}
\State $I^{(k)} \gets I^{(k)} + e^{-\lambda t} w^\top F(X_t^{(k)}) \Delta t$
\State $X_{t+\Delta t}^{(k)} \gets X_t^{(k)} + G(X_t^{(k)}) \Delta t + \sigma(X_t^{(k)}) \sqrt{\Delta t} \, Z^{(k)}$ where $Z^{(k)} \sim \mathcal{N}(0, I_m)$
\State $t \gets t + \Delta t$
\EndWhile
\State $\tau^{(k)} = t$ (exit time)
\State $h^{(k)} = e^{-\lambda \tau^{(k)}} \psi(X_{\tau^{(k)}}^{(k)}) + I^{(k)}$
\EndFor
\State \textbf{return} $\widehat{h}(x) = \frac{1}{K} \sum_{k=1}^K h^{(k)}$
\end{algorithmic}
\end{algorithm}

\begin{proposition}[Monte Carlo Error]\label{prop:mc_error}
The Monte Carlo estimator $\widehat{h}(x)$ satisfies
\begin{equation}
\E[(\widehat{h}(x) - h(x))^2] = \frac{\Var(h^{(1)}(x))}{K} \leq \frac{\sigma_{\mathrm{MC}}^2(x)}{K},
\end{equation}
where $\sigma_{\mathrm{MC}}^2(x)$ depends on the boundary data, source term, and exit time distribution.
\end{proposition}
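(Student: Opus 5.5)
The plan is to treat $\widehat{h}(x)$ as an empirical mean of $K$ independent, identically distributed copies $h^{(1)},\dots,h^{(K)}$. Each copy is a functional of an independent Brownian path, or equivalently of the independent Gaussian increments $Z^{(k)}$ in Algorithm~\ref{alg:mc_fk}. The argument then has two parts.

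\textbf{Step 1: unbiasedness.} By Corollary~\ref{cor:h_feynman_kac}, the exact path functional
\[
\Phi:=e^{-\lambda \tau_\Omega}\psi(X_{\tau_\Omega})+\int_0^{\tau_\Omega}e^{-\lambda t}w^\top F(X_t)\dt
\]
has $\E_x[\Phi]=h(x)$. I interpret $h^{(k)}$ as an i.i.d.\ copy of $\Phi$. This means the statement concerns the sampling error only, with the time-discretization bias of the Euler scheme set aside; I will say so explicitly, as the paper's separation of error sources suggests. Hence $\E[\widehat{h}(x)]=h(x)$ and the mean-squared error equals $\Var(\widehat{h}(x))$.

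\textbf{Step 2: variance.} By independence, $\Var\bigl(\frac1K\sum_k h^{(k)}\bigr)=\frac{1}{K^2}\sum_k\Var(h^{(k)})=\Var(h^{(1)})/K$. This gives the equality. For complex $\lambda$ I would read $(\cdot)^2$ as $|\cdot|^2$ and $\Var$ as $\E|\cdot-\E\cdot|^2$; the same identity holds.

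\textbf{Step 3: the explicit bound.} For $\Re(\lambda)>0$ we have $|e^{-\lambda\tau}\psi(X_\tau)|\le\|\psi\|_\infty$ and
\[
\Bigl|\int_0^\tau e^{-\lambda t}w^\top F(X_t)\dt\Bigr|\le \|w^\top F\|_{L^\infty(\Omega)}\min\bigl(\tau,1/\Re\lambda\bigr).
\]
Then $\Var(\Phi)\le\E|\Phi|^2$, and using $(a+b)^2\le 2a^2+2b^2$ gives
\[
\sigma_{\mathrm{MC}}^2(x):=2\|\psi\|_\infty^2+2\|w^\top F\|_\infty^2\,\E_x\bigl[\min(\tau_\Omega,1/\Re\lambda)^2\bigr].
\]
This quantity is finite and depends on exactly the three listed ingredients: the boundary data, the source term, and the exit time distribution. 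The only non-routine point is finiteness when $\Re\lambda\le 0$ is allowed. In that case I would instead invoke the bound $\E_x[\tau_\Omega^2]<\infty$ (indeed exponential moments of $\tau_\Omega$), which follows from uniform ellipticity on the bounded domain, as already used in Step 5 of the proof of Theorem~\ref{thm:feynman_kac}. Securing integrability of $\Phi^2$ is the main obstacle; everything else is elementary.
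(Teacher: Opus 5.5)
Your proposal is correct and follows the paper's proof essentially step for step. You get unbiasedness from Corollary~\ref{cor:h_feynman_kac}, treating $h^{(k)}$ as an exact copy of the continuous-time path functional (the paper does this implicitly, while you state it). You get $\Var(\widehat{h}(x))=\Var(h^{(1)})/K$ from independence. For $\Re(\lambda)>0$ you bound the variance via $\Var\le\E|\Phi|^2$ and $(a+b)^2\le 2a^2+2b^2$; your factor $\E_x[\min(\tau_\Omega,1/\Re(\lambda))^2]$ slightly sharpens the paper's $1/\Re(\lambda)^2$ and makes the exit-time dependence explicit.

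One caveat on your aside about $\Re(\lambda)\le 0$. The case $\Re(\lambda)=0$ is fine with $\E_x[\tau_\Omega^2]<\infty$. For $\Re(\lambda)<0$, however, you need $\E_x[e^{2|\Re(\lambda)|\tau_\Omega}]<\infty$. Uniform ellipticity only guarantees exponential moments of order below the principal Dirichlet eigenvalue of $-\Koop$ on $\Omega$, so the variance can be infinite once $2|\Re(\lambda)|$ reaches that threshold.
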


\begin{proof}
\textbf{Step 1: Unbiasedness.}
By the Feynman--Kac formula (Corollary~\ref{cor:h_feynman_kac}), the true solution is:
\begin{equation}
h(x) = \E_x\left[e^{-\lambda \tau_\Omega} \psi(X_{\tau_\Omega}) + \int_0^{\tau_\Omega} e^{-\lambda t} w^\top F(X_t) \dt\right].
\end{equation}
Each Monte Carlo sample $h^{(k)}$ is an independent realization of this expectation:
\begin{equation}
h^{(k)} = e^{-\lambda \tau^{(k)}} \psi(X_{\tau^{(k)}}^{(k)}) + \int_0^{\tau^{(k)}} e^{-\lambda t} w^\top F(X_t^{(k)}) \dt.
\end{equation}
Therefore, $\E[h^{(k)}] = h(x)$ for each $k$, and the estimator is unbiased:
\begin{equation}
\E[\widehat{h}(x)] = \E\left[\frac{1}{K}\sum_{k=1}^K h^{(k)}\right] = \frac{1}{K}\sum_{k=1}^K \E[h^{(k)}] = h(x).
\end{equation}

\textbf{Step 2: Variance computation.}
Since the samples $h^{(1)}, \ldots, h^{(K)}$ are i.i.d.:
\begin{align}
\Var(\widehat{h}(x)) &= \Var\left(\frac{1}{K}\sum_{k=1}^K h^{(k)}\right) = \frac{1}{K^2}\sum_{k=1}^K \Var(h^{(k)}) = \frac{\Var(h^{(1)})}{K}.
\end{align}

\textbf{Step 3: Mean squared error.}
Since the estimator is unbiased:
\begin{equation}
\E[(\widehat{h}(x) - h(x))^2] = \Var(\widehat{h}(x)) + (\E[\widehat{h}(x)] - h(x))^2 = \Var(\widehat{h}(x)) = \frac{\Var(h^{(1)})}{K}.
\end{equation}

\textbf{Step 4: Variance bound.}
We can bound the variance $\sigma_{\mathrm{MC}}^2(x) := \Var(h^{(1)})$ explicitly. Using the formula for $h^{(k)}$:
\begin{align}
\Var(h^{(1)}) &\leq \E[(h^{(1)})^2] \\
&= \E\left[\left(e^{-\lambda \tau_\Omega} \psi(X_{\tau_\Omega}) + \int_0^{\tau_\Omega} e^{-\lambda t} w^\top F(X_t) \dt\right)^2\right].
\end{align}
By the inequality $(a+b)^2 \leq 2a^2 + 2b^2$:
\begin{align}
\Var(h^{(1)}) &\leq 2\E\left[e^{-2\Re(\lambda)\tau_\Omega} \|\psi\|_\infty^2\right] + 2\E\left[\left(\int_0^{\tau_\Omega} e^{-\Re(\lambda) t} |w^\top F(X_t)| \dt\right)^2\right].
\end{align}
For $\Re(\lambda) > 0$ and bounded $\psi$, $F$:
\begin{equation}
\sigma_{\mathrm{MC}}^2(x) \leq 2\|\psi\|_\infty^2 + \frac{2\|w\|^2 \|F\|_\infty^2}{\Re(\lambda)^2}.
\end{equation}
The actual variance depends on the exit time distribution $\tau_\Omega$ and how the trajectory samples the source term $w^\top F$.
\end{proof}

\subsection{Combined Collocation-Monte Carlo Method}

For problems where the PDE approach is preferred but closed-form derivatives are unavailable:

\begin{enumerate}
\item Generate Monte Carlo estimates $\widehat{h}(x_i)$ at collocation points using Algorithm~\ref{alg:mc_fk}.
\item Solve kernel ridge regression:
\begin{equation}
(\bK + \eta \bI) \boldsymbol{\alpha} = \widehat{\mathbf{h}},
\end{equation}
where $\widehat{\mathbf{h}} = (\widehat{h}(x_1), \ldots, \widehat{h}(x_N))^\top$.
\item The approximation is $h(x) \approx \sum_{j=1}^N \alpha_j k(x, x_j)$.
\end{enumerate}

\subsection{Complete Algorithm}

\begin{algorithm}[H]
\caption{Stochastic Koopman Eigenfunction Computation}
\label{alg:stoch_koopman}
\begin{algorithmic}[1]
\Require Drift $G$, diffusion $\sigma$, nonlinear part $F$, eigenvalue $\lambda$, left eigenvector $w$
\Require Collocation points $\{x_i\}_{i=1}^N$, kernel $k$, regularization $\gamma$
\State \textbf{Assemble matrices:}
\For{$i, j = 1, \ldots, N$}
\State $K_{ij} = k(x_i, x_j)$
\State $L_{ij} = G(x_i) \cdot \nabla_x k(x_i, x_j)$
\State $D_{ij} = \frac{1}{2} \Tr[a(x_i) \nabla_x^2 k(x_i, x_j)]$ \Comment{Eq.~\eqref{eq:D_entry}}
\EndFor
\State $f_i = w^\top F(x_i)$ for $i = 1, \ldots, N$
\State \textbf{Solve linear system:}
\State $(\bL + \bD - \lambda \bK + \gamma \bI) \boldsymbol{\alpha} = -\mathbf{f}$
\State \textbf{Return:}
\State $h(x) = \sum_{j=1}^N \alpha_j k(x, x_j)$
\State $\phi(x) = w^\top x + h(x)$
\end{algorithmic}
\end{algorithm}

\section{Error Bounds}\label{sec:error_bounds}

We establish error bounds for the stochastic Koopman eigenfunction approximation.

\subsection{Boundary Stability}

\begin{theorem}[Boundary Stability]\label{thm:boundary_stability}
Let $h$ and $\tilde{h}$ solve~\eqref{eq:h_pde} with boundary conditions $\psi$ and $\tilde{\psi}$ respectively. Then for $x \in \Omega$:
\begin{equation}
|h(x) - \tilde{h}(x)| \leq \E_x[e^{-\Re(\lambda) \tau_\Omega}] \|\psi - \tilde{\psi}\|_{L^\infty(\partial\Omega)}.
\end{equation}
If $\Re(\lambda) \geq 0$:
\begin{equation}
\|h - \tilde{h}\|_{L^\infty(\Omega)} \leq \|\psi - \tilde{\psi}\|_{L^\infty(\partial\Omega)}.
\end{equation}
\end{theorem}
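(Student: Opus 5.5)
The plan is to use the Feynman--Kac representation of Corollary~\ref{cor:h_feynman_kac} for both solutions and subtract. Both $h$ and $\tilde h$ solve~\eqref{eq:h_pde} with the same source term $w^\top F$ and the same operator. Hence the integral terms $\E_x[\int_0^{\tau_\Omega} e^{-\lambda t} w^\top F(X_t)\dt]$ coincide and cancel in the difference. This is legitimate because both are expectations over the same diffusion started at $x$, so linearity of expectation applies; it requires only that each term be finite, which holds since $\E_x[\tau_\Omega]<\infty$ on bounded domains under Assumption~\ref{ass:ellipticity}, as used in the proof of Theorem~\ref{thm:feynman_kac}. Equivalently, $v := h - \tilde h$ solves the homogeneous problem $(\lambda - \Koop)v = 0$ in $\Omega$, $v = \psi - \tilde\psi$ on $\partial\Omega$. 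Theorem~\ref{thm:feynman_kac} with $g=0$ then gives
\begin{equation}
h(x) - \tilde h(x) = \E_x\left[e^{-\lambda\tau_\Omega}\,(\psi - \tilde\psi)(X_{\tau_\Omega})\right].
\end{equation}

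Next, move absolute values inside the expectation. Use $|e^{-\lambda \tau_\Omega}| = e^{-\Re(\lambda)\tau_\Omega}$, which holds for complex $\lambda$ because $\tau_\Omega$ is real. Since $X_{\tau_\Omega}\in\partial\Omega$ almost surely, by continuity of paths and $\tau_\Omega<\infty$ a.s., we have $|(\psi-\tilde\psi)(X_{\tau_\Omega})| \le \|\psi-\tilde\psi\|_{L^\infty(\partial\Omega)}$. Pulling this constant out yields the first inequality with the factor $\E_x[e^{-\Re(\lambda)\tau_\Omega}]$.

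For the second claim, if $\Re(\lambda)\ge 0$ then $e^{-\Re(\lambda)\tau_\Omega}\le 1$ pointwise, since $\tau_\Omega\ge 0$. Hence $\E_x[e^{-\Re(\lambda)\tau_\Omega}]\le 1$ for every $x\in\Omega$, and taking the supremum over $x$ gives the $L^\infty$ bound.

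The main obstacle is justifying the application of Theorem~\ref{thm:feynman_kac} to $v$ when $\Re(\lambda)=0$. The dominated-convergence step in that proof was phrased using integrability of $e^{-\Re(\lambda)t}$, which fails at $\Re(\lambda)=0$. In the homogeneous case the argument survives: $v$ is continuous on $\bar\Omega$, hence bounded, so $|e^{-\lambda(t\wedge\tau_\Omega)} v(X_{t\wedge\tau_\Omega})|\le \|v\|_\infty$ uniformly in $t$. The almost-sure finiteness of $\tau_\Omega$ then suffices for dominated convergence as $t\to\infty$; no integrability of the discount is needed because the source term vanishes. I would state this explicitly. I would also note that regularity of $h,\tilde h$ up to the boundary is assumed, as in the earlier theorem, so that the stopped martingale argument applies.
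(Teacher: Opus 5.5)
Your proposal follows the paper's proof: both treat $h-\tilde h$ as the solution of the homogeneous problem with boundary data $\psi-\tilde\psi$, apply Theorem~\ref{thm:feynman_kac} with $g=0$, use $|e^{-\lambda\tau_\Omega}|=e^{-\Re(\lambda)\tau_\Omega}$, and bound the discount by $1$ when $\Re(\lambda)\ge 0$. Your dominated-convergence remark (domination by $\|v\|_\infty$, using $\tau_\Omega<\infty$ a.s.) closes the $\Re(\lambda)=0$ case, which the paper's Feynman--Kac proof glosses over; it does not reach $\Re(\lambda)<0$ (e.g.\ $\lambda=-1$ in the experiments), where the natural dominating function is $\|v\|_\infty e^{-\Re(\lambda)\tau_\Omega}$ and one needs $\E_x[e^{-\Re(\lambda)\tau_\Omega}]<\infty$, a point neither you nor the paper addresses.
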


\begin{proof}
We establish stability using the Feynman--Kac representation.

\textbf{Step 1: PDE for the difference.}
The functions $h$ and $\tilde{h}$ satisfy the same PDE~\eqref{eq:h_pde} in $\Omega$:
\begin{equation}
G(x) \cdot \nabla h + \frac{1}{2}\Tr[a(x)\nabla^2 h] - \lambda h = -w^\top F(x),
\end{equation}
\begin{equation}
G(x) \cdot \nabla \tilde{h} + \frac{1}{2}\Tr[a(x)\nabla^2 \tilde{h}] - \lambda \tilde{h} = -w^\top F(x).
\end{equation}
The difference $e := h - \tilde{h}$ therefore satisfies:
\begin{equation}
G(x) \cdot \nabla e + \frac{1}{2}\Tr[a(x)\nabla^2 e] - \lambda e = 0 \quad \text{in } \Omega,
\end{equation}
which can be written as:
\begin{equation}
(\lambda - \Koop) e = 0 \quad \text{in } \Omega.
\end{equation}
The boundary condition is:
\begin{equation}
e = h - \tilde{h} = \psi - \tilde{\psi} \quad \text{on } \partial\Omega.
\end{equation}

\textbf{Step 2: Feynman--Kac representation of the difference.}
By Theorem~\ref{thm:feynman_kac} applied to the homogeneous equation (with $g = 0$):
\begin{equation}
e(x) = \E_x\left[e^{-\lambda \tau_\Omega} e(X_{\tau_\Omega})\right] = \E_x\left[e^{-\lambda \tau_\Omega} (\psi - \tilde{\psi})(X_{\tau_\Omega})\right].
\end{equation}
This follows because when the source term vanishes, the Feynman--Kac formula reduces to just the boundary term.

\textbf{Step 3: Pointwise bound.}
Taking absolute values and using the triangle inequality:
\begin{align}
|e(x)| &= \left|\E_x\left[e^{-\lambda \tau_\Omega} (\psi - \tilde{\psi})(X_{\tau_\Omega})\right]\right| \\
&\leq \E_x\left[\left|e^{-\lambda \tau_\Omega}\right| \cdot |(\psi - \tilde{\psi})(X_{\tau_\Omega})|\right] \\
&\leq \E_x\left[e^{-\Re(\lambda) \tau_\Omega}\right] \cdot \sup_{y \in \partial\Omega}|(\psi - \tilde{\psi})(y)| \\
&= \E_x\left[e^{-\Re(\lambda) \tau_\Omega}\right] \|\psi - \tilde{\psi}\|_{L^\infty(\partial\Omega)}.
\end{align}
Here we used $|e^{-\lambda t}| = e^{-\Re(\lambda) t}$ for complex $\lambda$.

\textbf{Step 4: Uniform bound for $\Re(\lambda) \geq 0$.}
When $\Re(\lambda) \geq 0$, we have:
\begin{equation}
e^{-\Re(\lambda) \tau_\Omega} \leq e^{0} = 1 \quad \text{for all } \tau_\Omega \geq 0.
\end{equation}
Therefore:
\begin{equation}
\E_x\left[e^{-\Re(\lambda) \tau_\Omega}\right] \leq 1.
\end{equation}
This yields:
\begin{equation}
|e(x)| \leq \|\psi - \tilde{\psi}\|_{L^\infty(\partial\Omega)} \quad \text{for all } x \in \Omega.
\end{equation}
Taking the supremum over $x \in \Omega$:
\begin{equation}
\|h - \tilde{h}\|_{L^\infty(\Omega)} = \sup_{x \in \Omega}|e(x)| \leq \|\psi - \tilde{\psi}\|_{L^\infty(\partial\Omega)}.
\end{equation}

\textbf{Step 5: Interpretation.}
This result is the stochastic analog of the maximum principle for elliptic PDEs. It says that perturbations in the boundary data cannot be amplified inside the domain when $\Re(\lambda) \geq 0$. For $\Re(\lambda) < 0$, the exponential factor $e^{-\Re(\lambda)\tau_\Omega}$ can be greater than 1, so boundary perturbations may be amplified, but only by a factor controlled by the expected exit time.
\end{proof}

\subsection{RKHS Approximation Error}

\begin{theorem}[RKHS Approximation]\label{thm:rkhs_approx}
Let $h^* \in \RKHS$ be the true solution and $h_N$ the collocation approximation with $N$ points. If the kernel $k$ is $s$-smooth (in the sense of Sobolev embedding), then:
\begin{equation}
\|h^* - h_N\|_{L^\infty(\Omega)} \leq C \|h^*\|_{\RKHS} h_{\mathrm{fill}}^\beta,
\end{equation}
where $h_{\mathrm{fill}} = \sup_{x \in \Omega} \min_i |x - x_i|$ is the fill distance and $\beta > 0$ depends on the kernel smoothness.
\end{theorem}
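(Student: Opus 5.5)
The plan is to treat the collocation scheme as a symmetric (Hermite-Birkhoff) or generalized interpolation problem and combine a sampling inequality with a stability estimate for the operator $\lambda - \Koop$.

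\textbf{Step 1: orthogonality.} Write $L_\lambda := \lambda - \Koop$ and $e := h^* - h_N$. I will interpret $h_N$ as the minimum-$\RKHS$-norm function matching the data $L_\lambda h_N(x_i) = w^\top F(x_i)$ at the interior collocation points and $h_N(x_i) = \psi(x_i)$ at the boundary points. This is the natural symmetric version of the system~\eqref{eq:collocation_system}, with the ansatz built from the functionals $\delta_{x_i}\circ L_\lambda$ applied to $k$. These functionals are bounded on $\RKHS$ because the kernel is admissible ($\RKHS \hookrightarrow C^2(\Omega)$). Then $h_N$ is the $\RKHS$-orthogonal projection of $h^*$ onto their span, so
\[
\|e\|_{\RKHS} \le \|h^*\|_{\RKHS},
\]
and $e$ satisfies $L_\lambda e(x_i) = 0$ at interior points and $e(x_i)=0$ at boundary points.

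\textbf{Step 2: sampling inequality.} If $\RKHS$ is norm-equivalent to $H^\tau(\Omega)$ with $\tau > 2 + d/2$, which is what ``$s$-smooth'' means here, then $L_\lambda e \in H^{\tau-2}(\Omega)$ with $\|L_\lambda e\|_{H^{\tau-2}} \le C\|e\|_{\RKHS}$, using that the coefficients are $C^2$ and bounded on $\bar\Omega$. I then apply the standard sampling (zeros) inequality of Narcowich--Ward--Wendland type to a function vanishing on a set of fill distance $h_{\mathrm{fill}}$. This gives
\[
\|L_\lambda e\|_{L^\infty(\Omega)} \le C\, h_{\mathrm{fill}}^{\tau-2-d/2}\|e\|_{\RKHS}.
\]
Applying the same inequality on $\partial\Omega$ gives $\|e\|_{L^\infty(\partial\Omega)} \le C h_{\mathrm{fill}}^{\tau - d/2}\|e\|_{\RKHS}$, or a slightly weaker exponent depending on the boundary sampling.

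\textbf{Step 3: stability.} Next I convert residual control into pointwise control of $e$. By Theorem~\ref{thm:feynman_kac} applied to $e$, with source $g = L_\lambda e$ and boundary data $e|_{\partial\Omega}$,
\[
|e(x)| \le \E_x[e^{-\Re(\lambda)\tau_\Omega}]\,\|e\|_{L^\infty(\partial\Omega)} + \E_x\Bigl[\int_0^{\tau_\Omega} e^{-\Re(\lambda)t}\dt\Bigr]\|L_\lambda e\|_{L^\infty(\Omega)}.
\]
This argument mirrors Theorem~\ref{thm:boundary_stability}. The second factor is bounded by $\min(1/\Re(\lambda), \sup_x\E_x[\tau_\Omega])$, which is finite by uniform ellipticity on a bounded domain. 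Combining Steps 1--3 gives the claim with $\beta = \tau - 2 - d/2$ and $C$ depending on $\Omega$, $\lambda$, $\nu$, the coefficients and the kernel.

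\textbf{Main obstacle.} The hardest step is justifying Step 1 for the nonsymmetric (Kansa-type) system that the paper actually solves, since the orthogonality and norm bound are not automatic there. If that route fails, I would restrict to the symmetric formulation, or to the regularized least-squares minimizer of~\eqref{eq:variational_stoch} with $\gamma \sim h_{\mathrm{fill}}^{2\beta}$, where $\|h_N\|_{\RKHS} \le C\|h^*\|_{\RKHS}$ follows from comparing $J[h_N] \le J[h^*]$. Two further technical points need care: the validity of the sampling inequality near $\partial\Omega$, which needs an interior cone condition on $\Omega$, and the Feynman--Kac step, which requires $e \in C^2(\bar\Omega)$. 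The latter follows from the embedding $\RKHS \hookrightarrow C^2$.
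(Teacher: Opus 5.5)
Your argument is correct under the hypotheses you make explicit, but it takes a genuinely different route from the paper's. The paper never uses the operator $\lambda-\Koop$. It identifies $h_N$ with the kernel interpolant $\mathcal{I}_N h^*$ of the values $h^*(x_i)$ and bounds $|h^*(x)-\mathcal{I}_N h^*(x)|\le P_N(x)\|h^*\|_{\RKHS}$ using the power function $P_N(x)^2=k(x,x)-\mathbf{k}(x)^\top\bK^{-1}\mathbf{k}(x)$. It quotes standard fill-distance estimates for $\sup_x P_N$. Its Step~6 then asserts, without proof, that the regularized collocation solution is close to the interpolant up to a conditioning-dependent constant. That route is short, needs no hypothesis on $\lambda$ or the coefficients, and claims interpolation-order rates (spectral for Gaussians). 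But the passage from interpolation to collocation, which is the real content of the theorem, is never established.

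Your chain actually controls the collocation error. Symmetric Hermite--Birkhoff collocation is an $\RKHS$-orthogonal projection, so $\|e\|_{\RKHS}\le\|h^*\|_{\RKHS}$ and $e$ is annihilated by every collocation functional. A zeros lemma then bounds the residual $(\lambda-\Koop)e$ and the boundary trace. Finally, Feynman--Kac stability, as in Theorem~\ref{thm:boundary_stability} but with a source term, converts residual control into pointwise control. The resulting exponent $\beta=\tau-2-d/2$ is the standard one for second-order symmetric collocation. It reflects the two derivatives lost to the operator, rather than the interpolation rate the paper asserts.

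The price is a set of hypotheses that you should state rather than leave implicit.
\begin{itemize}
\item You prove the bound for the symmetric formulation with boundary collocation points, not for the unsymmetric system~\eqref{eq:collocation_system}. That system, as written, does not even impose the Dirichlet data. Some restriction of this kind is unavoidable, since unsymmetric collocation matrices can be singular for particular point sets.
\item Your stability factor $\min(1/\Re(\lambda),\sup_x\E_x[\tau_\Omega])$ presupposes $\Re(\lambda)\ge 0$. In the paper's main regime (eigenvalues of a stable linearization, e.g.\ $\lambda=-1$ in the experiments), both the boundary term and the source term need $\sup_x\E_x[e^{|\Re(\lambda)|\tau_\Omega}]<\infty$. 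That means $|\Re(\lambda)|$ must lie below the principal Dirichlet eigenvalue of $-\Koop$ on $\Omega$. This holds on small domains but is not automatic.
\item The estimate $\|(\lambda-\Koop)e\|_{H^{\tau-2}}\le C\|e\|_{H^{\tau}}$ needs $G,a\in C^{\lceil\tau-2\rceil}(\bar\Omega)$. With only the $C^2$ coefficients of Assumption~\ref{ass:regularity}, you get $\beta=\min(\tau-2,2)-d/2$, which is positive only for $d\le 3$.
\item The residual vanishes only at the interior points, so it is their fill distance that must be $O(h_{\mathrm{fill}})$.
\item You need only the embedding $\RKHS\hookrightarrow H^\tau(\Omega)$, not norm equivalence. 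Gaussian kernels are therefore covered, with algebraic rates of any order.
\end{itemize}
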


\begin{proof}
We establish the error bound using the power function framework for kernel interpolation.

\textbf{Step 1: Interpolation operator.}
Let $\mathcal{I}_N: \RKHS \to \RKHS$ be the interpolation operator that maps any function $h \in \RKHS$ to its interpolant at the collocation points $\{x_i\}_{i=1}^N$:
\begin{equation}
\mathcal{I}_N h = \sum_{j=1}^N \alpha_j k(\cdot, x_j), \quad \text{where } (\mathcal{I}_N h)(x_i) = h(x_i) \text{ for } i = 1, \ldots, N.
\end{equation}

\textbf{Step 2: Power function.}
Define the power function $P_N: \Omega \to \R^+$ by:
\begin{equation}
P_N(x) := \sup\left\{|h(x)| : h \in \RKHS, \|h\|_{\RKHS} \leq 1, h(x_i) = 0 \text{ for } i = 1, \ldots, N\right\}.
\end{equation}
The power function measures the worst-case interpolation error at each point. It can be computed explicitly as:
\begin{equation}
P_N(x)^2 = k(x,x) - \mathbf{k}(x)^\top \bK^{-1} \mathbf{k}(x),
\end{equation}
where $\mathbf{k}(x) = (k(x, x_1), \ldots, k(x, x_N))^\top$ and $\bK$ is the Gram matrix $K_{ij} = k(x_i, x_j)$.

\textbf{Step 3: Pointwise error bound.}
For any $h \in \RKHS$, the interpolation error satisfies:
\begin{equation}
|h(x) - (\mathcal{I}_N h)(x)| \leq P_N(x) \|h\|_{\RKHS}.
\end{equation}
This follows because $h - \mathcal{I}_N h$ vanishes at all collocation points, and the power function bounds the value of unit-norm functions with this property.

\textbf{Step 4: Fill distance estimate.}
For many smooth kernels, the power function satisfies:
\begin{equation}
\sup_{x \in \Omega} P_N(x) \leq C h_{\mathrm{fill}}^\beta,
\end{equation}
where the exponent $\beta$ depends on the smoothness of the kernel:
\begin{itemize}
\item For Gaussian kernels: $P_N(x) = O(e^{-c/h_{\mathrm{fill}}})$ (exponential/spectral convergence).
\item For Mat\'{e}rn-$\nu$ kernels: $P_N(x) = O(h_{\mathrm{fill}}^{\nu - d/2})$ (algebraic convergence).
\item For polynomial kernels of degree $p$: $P_N(x) = O(h_{\mathrm{fill}}^{p+1})$.
\end{itemize}

\textbf{Step 5: Global error bound.}
Taking the supremum over $x \in \Omega$:
\begin{equation}
\|h^* - h_N\|_{L^\infty(\Omega)} = \sup_{x \in \Omega}|h^*(x) - h_N(x)| \leq \sup_{x \in \Omega} P_N(x) \|h^*\|_{\RKHS} \leq C \|h^*\|_{\RKHS} h_{\mathrm{fill}}^\beta.
\end{equation}

\textbf{Step 6: Collocation vs. interpolation.}
The collocation approximation $h_N$ satisfies the PDE residual equations rather than exact interpolation. However, for well-posed PDEs with regularization, the collocation solution converges to the interpolant as the regularization vanishes, and the above bound holds with a modified constant $C$ that depends on the conditioning of the collocation system.
\end{proof}

Convergence rates for common kernels:
\begin{center}
\begin{tabular}{lc}
\toprule
Kernel & Convergence Rate \\
\midrule
Gaussian & Spectral (exponential) \\
Mat\'{e}rn-$\nu$ & $O(h_{\mathrm{fill}}^{\nu - d/2})$ \\
Polynomial degree $p$ & $O(h_{\mathrm{fill}}^{p+1})$ \\
\bottomrule
\end{tabular}
\end{center}

\subsection{Total Error Decomposition}

For the combined collocation-Monte Carlo method:

\begin{theorem}[Total Error Bound]\label{thm:total_error}
Let $h^*$ be the true solution, $\{x_i\}_{i=1}^N$ collocation points, and $\widehat{h}(x_i)$ Monte Carlo estimates with $K$ samples. The kernel ridge regression estimator $\widehat{h}_N$ satisfies:
\begin{equation}
\frac{1}{N}\sum_{i=1}^N (\widehat{h}_N(x_i) - h^*(x_i))^2 \leq \underbrace{\inf_{u \in \RKHS}\left\{\frac{1}{N}\sum_{i=1}^N(u(x_i) - h^*(x_i))^2 + \eta\|u\|_{\RKHS}^2\right\}}_{\text{RKHS approximation error}} + \underbrace{\frac{C\sigma_{\mathrm{MC}}^2}{K}}_{\text{Monte Carlo error}}.
\end{equation}
\end{theorem}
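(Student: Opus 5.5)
The plan is to treat kernel ridge regression as a linear smoother and split the Monte Carlo estimates into signal plus zero-mean noise, $\widehat{h}(x_i) = h^*(x_i) + \varepsilon_i$. Here the $\varepsilon_i$ are independent across collocation points, because independent path ensembles are used at each $x_i$. By Proposition~\ref{prop:mc_error}, $\E[\varepsilon_i]=0$ and $\E[\varepsilon_i^2]\le \sigma_{\mathrm{MC}}^2(x_i)/K$. I set $\sigma_{\mathrm{MC}}^2 := \max_i \sigma_{\mathrm{MC}}^2(x_i)$, which is finite by the explicit bound in Step~4 of that proof. Throughout I interpret the stated inequality in expectation over the Monte Carlo samples. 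This is needed because the MC term is a variance, and a pathwise bound cannot hold.

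\textbf{Step 1 (linear smoother).} Define the KRR estimator as the minimizer of $\frac1N\sum_i (u(x_i)-y_i)^2 + \eta\|u\|_{\RKHS}^2$. With the scaling $(\bK+N\eta\bI)\boldsymbol\alpha = \mathbf{y}$, which matches the paper's system after absorbing $N$ into $\eta$, the fitted values are $\mathbf{S}\mathbf{y}$, where $\mathbf{S} = \bK(\bK+N\eta\bI)^{-1}$. The matrix $\mathbf{S}$ is symmetric with eigenvalues in $[0,1)$. Applying this to $\mathbf{y} = \mathbf{h}^* + \boldsymbol\varepsilon$ gives $\widehat{\mathbf{h}}_N - \mathbf{h}^* = (\mathbf{S}\mathbf{h}^*-\mathbf{h}^*) + \mathbf{S}\boldsymbol\varepsilon$.

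\textbf{Step 2 (bias--variance).} I take the expectation of $\frac1N\|\cdot\|^2$. The cross term vanishes since $\E\boldsymbol\varepsilon=0$, so
\begin{equation}
\E\,\tfrac1N\|\widehat{\mathbf{h}}_N-\mathbf{h}^*\|^2 = \tfrac1N\|(\mathbf{I}-\mathbf{S})\mathbf{h}^*\|^2 + \tfrac1N\E\|\mathbf{S}\boldsymbol\varepsilon\|^2 .
\end{equation}
For the variance term, independence gives $\E\|\mathbf{S}\boldsymbol\varepsilon\|^2 = \sum_i (\mathbf{S}^2)_{ii}\E\varepsilon_i^2 \le \Tr(\mathbf{S}^2)\,\sigma_{\mathrm{MC}}^2/K$. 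This yields $C = \Tr(\mathbf{S}^2)/N \le 1$, or more sharply the effective-dimension ratio $d_{\mathrm{eff}}(\eta)/N$.

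\textbf{Step 3 (bias as approximation error).} This is the step I expect to require the most care, namely showing that $\frac1N\|(\mathbf{I}-\mathbf{S})\mathbf{h}^*\|^2$ is at most the infimum in the statement. Let $u_\eta$ be the noiseless KRR solution with data $\mathbf{h}^*$; its fitted values are $\mathbf{S}\mathbf{h}^*$. Because $u_\eta$ minimizes the regularized functional, its value equals the infimum $\mathcal{A}(\eta)$. Since $\eta\|u_\eta\|^2\ge0$, the data-misfit part of that value is at most $\mathcal{A}(\eta)$. That misfit is exactly $\frac1N\|(\mathbf{I}-\mathbf{S})\mathbf{h}^*\|^2$, which gives the bound with constant one. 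As a check, the representer theorem reduces everything to finite dimensions, and there the minimal value is explicitly $\frac{N\eta}{N}\mathbf{h}^{*\top}(\bK+N\eta\bI)^{-1}\mathbf{h}^*$. This dominates $\frac1N\|N\eta(\bK+N\eta\bI)^{-1}\mathbf{h}^*\|^2$ because $N\eta(\bK+N\eta\bI)^{-1}\preceq\bI$.

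Combining Steps 2 and 3 gives the stated decomposition, with $C\le1$ up to the normalization of $\eta$. If a high-probability rather than expected bound is wanted, Markov's inequality or a Bernstein bound on the quadratic form $\|\mathbf{S}\boldsymbol\varepsilon\|^2$ would add a constant factor.
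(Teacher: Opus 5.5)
Your proof is correct. It differs from the paper's at the one step that determines the constants.

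Both arguments use the same oracle, the noiseless KRR fit to $h^*(x_i)$, whose misfit is at most the regularized infimum; your Step 3 is exactly the paper's oracle inequality. Both use the same smoother with $\|\bS\|\le 1$, and both must be read in expectation over the Monte Carlo samples, as you note.

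The paper compares $\widehat{h}_N$ with the oracle $h_N^*$ and splits with $(a+b)^2\le 2a^2+2b^2$. It bounds the noise part by $\|\bS\|^2\,\E\|\boldsymbol{\epsilon}\|^2/N$. This needs only a second-moment bound on $\boldsymbol{\epsilon}$: no zero mean is needed for a cross term, and no independence across collocation points. The price is that it yields $2\inf\{\cdots\}+2\sigma_{\mathrm{MC}}^2/K$. ``Setting $C=2$'' does not remove the factor $2$ on the approximation term, so strictly the paper proves a weaker inequality than the one displayed.

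Your exact bias--variance identity uses $\E\boldsymbol{\epsilon}=0$ to kill the cross term. That is what gives coefficient exactly $1$ on the infimum, as the statement requires. With independence it also sharpens the Monte Carlo constant to $\Tr(\bS^2)/N\le d_{\mathrm{eff}}(\eta)/N$.

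Independence is not essential to your route. If path ensembles were shared across the $x_i$, the paper's bound $\E\|\bS\boldsymbol{\epsilon}\|^2\le\|\bS\|^2\,\E\|\boldsymbol{\epsilon}\|^2$ still gives $C\le 1$. Zero mean, however, is essential. If you kept the Euler--Maruyama time-discretization bias, the cross term would survive, and you would have to fall back on the paper's factor-$2$ split.
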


\begin{proof}
We decompose the total error into approximation and estimation components.

\textbf{Step 1: Setup.}
Let $\mathbf{h}^* = (h^*(x_1), \ldots, h^*(x_N))^\top$ be the true values at collocation points, and $\widehat{\mathbf{h}} = (\widehat{h}(x_1), \ldots, \widehat{h}(x_N))^\top$ be the Monte Carlo estimates. The kernel ridge regression estimator solves:
\begin{equation}
\widehat{h}_N = \argmin_{u \in \RKHS} \frac{1}{N}\sum_{i=1}^N (u(x_i) - \widehat{h}(x_i))^2 + \eta \|u\|_{\RKHS}^2.
\end{equation}

\textbf{Step 2: Bias-variance decomposition.}
We write $\widehat{h}(x_i) = h^*(x_i) + \epsilon_i$, where $\epsilon_i = \widehat{h}(x_i) - h^*(x_i)$ is the Monte Carlo error at point $x_i$. By Proposition~\ref{prop:mc_error}:
\begin{equation}
\E[\epsilon_i] = 0, \quad \E[\epsilon_i^2] = \frac{\sigma_{\mathrm{MC}}^2(x_i)}{K} \leq \frac{\sigma_{\mathrm{MC}}^2}{K}.
\end{equation}

\textbf{Step 3: Oracle inequality.}
Define the ``oracle'' estimator $h_N^*$ as the kernel ridge regression fit to the true values:
\begin{equation}
h_N^* = \argmin_{u \in \RKHS} \frac{1}{N}\sum_{i=1}^N (u(x_i) - h^*(x_i))^2 + \eta \|u\|_{\RKHS}^2.
\end{equation}
The oracle satisfies the standard approximation bound:
\begin{equation}
\frac{1}{N}\sum_{i=1}^N (h_N^*(x_i) - h^*(x_i))^2 + \eta\|h_N^*\|_{\RKHS}^2 \leq \inf_{u \in \RKHS}\left\{\frac{1}{N}\sum_{i=1}^N(u(x_i) - h^*(x_i))^2 + \eta\|u\|_{\RKHS}^2\right\}.
\end{equation}

\textbf{Step 4: Perturbation analysis.}
The kernel ridge regression estimator has the closed form:
\begin{equation}
\widehat{h}_N(x) = \mathbf{k}(x)^\top (\bK + N\eta \bI)^{-1} \widehat{\mathbf{h}},
\end{equation}
where $\mathbf{k}(x) = (k(x, x_1), \ldots, k(x, x_N))^\top$. Similarly, the oracle is:
\begin{equation}
h_N^*(x) = \mathbf{k}(x)^\top (\bK + N\eta \bI)^{-1} \mathbf{h}^*.
\end{equation}
Therefore:
\begin{equation}
\widehat{h}_N(x) - h_N^*(x) = \mathbf{k}(x)^\top (\bK + N\eta \bI)^{-1} \boldsymbol{\epsilon},
\end{equation}
where $\boldsymbol{\epsilon} = \widehat{\mathbf{h}} - \mathbf{h}^* = (\epsilon_1, \ldots, \epsilon_N)^\top$.

\textbf{Step 5: Error decomposition.}
Using the triangle inequality:
\begin{align}
\frac{1}{N}\sum_{i=1}^N (\widehat{h}_N(x_i) - h^*(x_i))^2 &\leq \frac{2}{N}\sum_{i=1}^N (\widehat{h}_N(x_i) - h_N^*(x_i))^2 + \frac{2}{N}\sum_{i=1}^N (h_N^*(x_i) - h^*(x_i))^2.
\end{align}

\textbf{Step 6: Bounding the Monte Carlo contribution.}
Let $\bS = (\bK + N\eta \bI)^{-1} \bK$. Then:
\begin{align}
\frac{1}{N}\sum_{i=1}^N (\widehat{h}_N(x_i) - h_N^*(x_i))^2 &= \frac{1}{N}\boldsymbol{\epsilon}^\top \bS^\top \bS \boldsymbol{\epsilon} \leq \frac{\|\bS\|^2}{N} \|\boldsymbol{\epsilon}\|^2.
\end{align}
Taking expectations:
\begin{equation}
\E\left[\frac{\|\boldsymbol{\epsilon}\|^2}{N}\right] = \frac{1}{N}\sum_{i=1}^N \E[\epsilon_i^2] \leq \frac{\sigma_{\mathrm{MC}}^2}{K}.
\end{equation}
Since $\|\bS\| \leq 1$ (the smoother matrix has spectral norm at most 1), we obtain:
\begin{equation}
\E\left[\frac{1}{N}\sum_{i=1}^N (\widehat{h}_N(x_i) - h_N^*(x_i))^2\right] \leq \frac{\sigma_{\mathrm{MC}}^2}{K}.
\end{equation}

\textbf{Step 7: Combining bounds.}
Combining Steps 4--6:
\begin{align}
\E\left[\frac{1}{N}\sum_{i=1}^N (\widehat{h}_N(x_i) - h^*(x_i))^2\right] &\leq 2 \cdot \frac{\sigma_{\mathrm{MC}}^2}{K} + 2 \inf_{u \in \RKHS}\left\{\frac{1}{N}\sum_{i=1}^N(u(x_i) - h^*(x_i))^2 + \eta\|u\|_{\RKHS}^2\right\}.
\end{align}
Setting $C = 2$ yields the stated bound.
\end{proof}

\begin{remark}[Error Trade-off]
The total error separates cleanly into:
\begin{itemize}
\item \textbf{RKHS error:} Decreases with more collocation points $N$ and richer kernel spaces.
\item \textbf{Monte Carlo error:} Decreases as $O(K^{-1/2})$ with more samples.
\end{itemize}
For optimal allocation, balance $N$ and $K$ so both errors are comparable.
\end{remark}

\subsection{Effect of Diffusion on Conditioning}

The diffusion term provides an elliptic regularization effect:

\begin{proposition}[Conditioning Improvement from Diffusion--Strengthened]\label{prop:conditioning}
Let $\bA_\sigma = \bL + \bD - \lambda \bK$ be the collocation system matrix for diffusion level $\sigma$, where:
\begin{itemize}
\item $K_{ij} = k(x_i, x_j)$ is the kernel Gram matrix
\item $L_{ij} = G(x_i) \cdot \nabla_x k(x_i, x_j)$ is the drift matrix
\item $D_{ij} = \frac{1}{2}\Tr[a(x_i) \nabla_x^2 k(x_i, x_j)]$ is the diffusion matrix
\end{itemize}
Under uniform ellipticity (Assumption~\ref{ass:ellipticity}) with Gaussian RBF kernel:

\textbf{(i)} The diagonal entries of $\bD$ satisfy:
\begin{equation}\label{eq:Dii_bound}
D_{ii} = -\frac{\Tr[a(x_i)]}{2\ell^2} \leq -\frac{d\nu}{2\ell^2} < 0.
\end{equation}

\textbf{(ii)} The diffusion matrix contributes a negative-definite diagonal shift, improving the conditioning of $\bA_\sigma$ compared to $\bA_0 = \bL - \lambda \bK$.

\textbf{(iii)} For moderate diffusion strength, the condition number satisfies:
\begin{equation}\label{eq:cond_bound}
\kappa(\bA_\sigma) \leq C(\lambda, \Omega, \ell) \cdot \kappa(\bA_0),
\end{equation}
where $C < 1$ for sufficiently elliptic problems.
\end{proposition}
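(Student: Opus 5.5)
Part (i) is a direct computation. Setting $x_i = x_j$ in the diffusion-matrix formula~\eqref{eq:D_entry} kills the quadratic term $(x_i-x_j)^\top a(x_i)(x_i-x_j)/\ell^4$. Since $k(x_i,x_i)=1$ for the Gaussian RBF, this leaves $D_{ii} = -\Tr[a(x_i)]/(2\ell^2)$. Uniform ellipticity (Assumption~\ref{ass:ellipticity}) gives $a(x_i)\succeq \nu I_d$. Taking the trace over the standard basis vectors gives $\Tr[a(x_i)] \ge d\nu$, and hence $D_{ii}\le -d\nu/(2\ell^2)<0$. This step is routine.

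For part (ii) I would split $\bD$ into its diagonal and off-diagonal parts, $\bD = -\mathrm{diag}(\Tr[a(x_i)])/(2\ell^2) + \bD_{\mathrm{off}}$. The off-diagonal entries carry the factor $k(x_i,x_j)=e^{-|x_i-x_j|^2/(2\ell^2)}$. Multiplying by the bracket in~\eqref{eq:D_entry}, these entries are bounded by $\frac{\|a\|_\infty}{2\ell^2}\bigl(\tfrac{r^2}{\ell^2}+d\bigr)e^{-r^2/(2\ell^2)}$ with $r=|x_i-x_j|$, which decays in the separation distance. A Gershgorin argument then gives the key consequence: when the collocation points are separated relative to $\ell$, the row sums of $|\bD_{\mathrm{off}}|$ are small compared with $|D_{ii}|$. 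So $\bD$ is, up to a controlled perturbation, a negative diagonal shift of size at least $d\nu/(2\ell^2)$.

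The natural global statement is that $\bD$ is negative semidefinite whenever $a$ is constant. In that case $\bD = \tfrac12\Tr[a\nabla_x^2 k]$ is the Gram matrix of the operator $\tfrac12\nabla\cdot a\nabla$ applied to a positive-definite kernel, and its Fourier symbol $-\tfrac12\xi^\top a\xi\,\hat k(\xi)\le -\tfrac{\nu}{2}|\xi|^2\hat k(\xi)$ is nonpositive. I would establish this first and then treat variable $a$ as a perturbation controlled by a Lipschitz bound on $a$ times $\ell$.

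Part (iii) is the main obstacle, because $\bA_0=\bL-\lambda\bK$ is nonsymmetric and adding a definite term does not automatically reduce $\kappa=\sigma_{\max}/\sigma_{\min}$. My plan is to bound $\sigma_{\min}(\bA_\sigma)$ from below through the numerical range. For any unit vector $v$,
\[
\sigma_{\min}(\bA_\sigma)\ \ge\ |\Re\, v^*\bA_\sigma v| \ \ge\ -\Re\, v^*\bA_0 v + \tfrac{\nu}{2}\,\mu_{\min},
\]
where $\mu_{\min}$ is the smallest eigenvalue of $-\bD$ restricted to the relevant subspace, obtained from part (ii). This requires $-\Re(\bA_0)\succeq 0$, which is the matrix analogue of the coercivity estimate~\eqref{eq:corrected_coercivity} and holds when $\Re\lambda>\lambda_0$.

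For the upper singular value I would use $\sigma_{\max}(\bA_\sigma)\le\sigma_{\max}(\bA_0)+\|\bD\|$, with $\|\bD\|\lesssim \|a\|_\infty N/\ell^2$. The ratio then gives a bound of the form
\[
C=\frac{1+\|\bD\|/\sigma_{\max}(\bA_0)}{1+\nu\mu_{\min}/(2\sigma_{\min}(\bA_0))}.
\]
This is less than $1$ precisely when the elliptic gain on the small singular values dominates the growth of the large ones. That is the regime "moderate diffusion, $\sigma_{\min}(\bA_0)$ tiny" that I would take as the precise meaning of "sufficiently elliptic".

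I expect this step to need an additional hypothesis, such as $\|\bD\|\le\sigma_{\max}(\bA_0)$, and I would state it explicitly rather than claim $C<1$ unconditionally.
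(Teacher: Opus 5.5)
\textbf{Overall.} Your part (i) is exactly the paper's Step~1. Your Bochner argument for constant $a$ (symbol $-\tfrac12\xi^\top a\xi\,\hat k(\xi)\le 0$, so $\bD$ is symmetric negative semidefinite) is a genuinely stronger treatment of (ii) than the paper's. The paper only records the sign of $D_{ii}$ and then applies Weyl's inequality to the non-symmetric $\bA_0$, $\bA_\sigma$. For (iii) the paper ultimately falls back on the condition numbers of Section~\ref{sec:numerical}, so you are right that an explicit extra hypothesis is needed.

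\textbf{Problems in your part (ii).} Two pieces do not work as planned.
\begin{enumerate}
\item The Gershgorin bound needs point separation of order $\ell$ or more. That is not the regime of interest: Test~2 has spacing $\approx 0.05$ with $\ell=0.8$. In the separated regime $\bK$ is already close to the identity, so there is nothing to improve.
\item The variable-$a$ perturbation cannot certify definiteness. For non-constant $a$, $\bD$ is non-symmetric. The perturbation $E_{ij}=\tfrac12\Tr[(a(x_i)-\bar a)\nabla_x^2 k(x_i,x_j)]$ has diagonal entries $-\Tr(a(x_i)-\bar a)/(2\ell^2)$, which are of fixed size. By contrast, the smallest eigenvalue of $-\bD_{\bar a}$ is exponentially small as the separation distance of the points tends to $0$. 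So your $\mu_{\min}>0$ is not available for non-constant $a$.
\end{enumerate}

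\textbf{The genuine gap is in (iii).}
\begin{enumerate}
\item The first inequality must read $\sigma_{\min}(\bA_\sigma)\ge\min_{\|v\|=1}|\Re\, v^*\bA_\sigma v|$; as stated, for an arbitrary unit $v$, it is false.
\item $-\Re(\bA_0)\succeq 0$ is not the matrix analogue of \eqref{eq:corrected_coercivity}. With $u=\sum_j v_j k(\cdot,x_j)$ one has $v^*\bA_0 v=\sum_i \bar v_i\,[(G\cdot\nabla-\lambda)u](x_i)$. This is a point-evaluation (RKHS) pairing, not the $L^2$ pairing in which the estimate holds, so nothing transfers automatically.
\item Worse, the hypothesis excludes the cases the proposition is about. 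For the Gaussian kernel $L_{ii}=0$ and $K_{ii}=1$, so the diagonal of $-\Re(\bA_0)$ is $\Re\lambda$. Principal eigenvalues at a stable equilibrium have $\Re\lambda<0$; every experiment the proposition cites uses $\lambda=-1$. There $-\lambda\bK\succ 0$ and $\bD\preceq 0$ enter the Hermitian part with opposite signs. For the OU test with $a=\sigma^2$ and $\lambda=-\theta$, the Hermitian part of $-\bA_\sigma$ has symbol $\hat k(\xi)\bigl[\tfrac12(\theta\ell^2+\sigma^2)\xi^2-\tfrac32\theta\bigr]$, which changes sign for every $\sigma$. Hence on point sets like those of Section~\ref{sec:numerical}, $0$ lies in the numerical range $W(\bA_\sigma)$, and your bound gives only $\sigma_{\min}(\bA_\sigma)\ge 0$.
\item Even when $-\Re(\bA_0)\succeq0$ does hold, your chain yields $\sigma_{\min}(\bA_\sigma)\ge m_0+\mu_{\min}$ with $m_0=\lambda_{\min}(-\Re\bA_0)\le\sigma_{\min}(\bA_0)$. (The factor $\nu/2$ is extraneous, since $\mu_{\min}$ is already an eigenvalue of $-\bD$.) What the argument actually gives is
\[
\frac{\kappa(\bA_\sigma)}{\kappa(\bA_0)}\le\frac{\sigma_{\min}(\bA_0)\bigl(1+\|\bD\|/\sigma_{\max}(\bA_0)\bigr)}{m_0+\mu_{\min}} .
\]
Your displayed $C$ replaces $m_0$ by $\sigma_{\min}(\bA_0)$, which goes in the wrong direction. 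For a transport-dominated, nearly skew $\bA_0$, one can have $m_0\ll\sigma_{\min}(\bA_0)$.
\end{enumerate}
The hypothesis $\|\bD\|\le\sigma_{\max}(\bA_0)$ does not repair this. You would need something like $m_0+\mu_{\min}>\sigma_{\min}(\bA_0)\bigl(1+\|\bD\|/\sigma_{\max}(\bA_0)\bigr)$, which is essentially the conclusion restated.
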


\begin{proof}
We analyze how the diffusion matrix $\bD$ affects the conditioning of the system.

\textbf{Step 1: Structure of the diffusion matrix.}
From~\eqref{eq:D_entry} with the Gaussian kernel:
\begin{equation}
D_{ij} = \frac{k(x_i, x_j)}{2}\left[\frac{(x_i - x_j)^\top a(x_i)(x_i - x_j)}{\ell^4} - \frac{\Tr[a(x_i)]}{\ell^2}\right].
\end{equation}
The diagonal entries are:
\begin{equation}
D_{ii} = \frac{k(x_i, x_i)}{2}\left[0 - \frac{\Tr[a(x_i)]}{\ell^2}\right] = -\frac{\Tr[a(x_i)]}{2\ell^2} < 0.
\end{equation}

\textbf{Step 2: Elliptic regularization interpretation.}
Consider the bilinear form associated with the continuous operator. For the stochastic Koopman operator, the diffusion term $\frac{1}{2}\Tr[a\nabla^2 h]$ is negative semi-definite when integrated against test functions (by integration by parts):
\begin{equation}
\int_\Omega \frac{1}{2}\Tr[a\nabla^2 h] \cdot h \, dx = -\frac{1}{2}\int_\Omega \nabla h^\top a \nabla h \, dx \leq -\frac{\nu}{2}\|\nabla h\|_{L^2}^2,
\end{equation}
under uniform ellipticity $a \geq \nu I$.

At the discrete level, this elliptic contribution shifts the eigenvalues of $\bA_\sigma$ compared to $\bA_0$.

\textbf{Step 3: Eigenvalue shift analysis.}
Write $\bA_\sigma = \bA_0 + \bD$. For symmetric matrices, Weyl's inequality gives:
\begin{equation}
\lambda_i(\bA_\sigma) = \lambda_i(\bA_0 + \bD) \in [\lambda_i(\bA_0) + \lambda_{\min}(\bD), \lambda_i(\bA_0) + \lambda_{\max}(\bD)].
\end{equation}

For the stochastic problem, the diffusion matrix contributes a negative definite component to the diagonal. Let $\bD = \bD_{\mathrm{diag}} + \bD_{\mathrm{off}}$ where:
\begin{itemize}
\item $\bD_{\mathrm{diag}}$ has diagonal entries $D_{ii} = -\frac{\Tr[a(x_i)]}{2\ell^2}$
\item $\bD_{\mathrm{off}}$ contains the off-diagonal entries
\end{itemize}

Under uniform ellipticity, $\Tr[a(x)] \geq d\nu > 0$, so:
\begin{equation}
D_{ii} \leq -\frac{d\nu}{2\ell^2} < 0.
\end{equation}

\textbf{Step 4: Condition number bound.}
The deterministic system $\bA_0 = \bL - \lambda\bK$ may have small eigenvalues when $\lambda$ is close to eigenvalues of the discretized first-order operator $\bL \bK^{-1}$.

The stochastic system $\bA_\sigma = \bL + \bD - \lambda\bK$ has a modified spectrum. The key observation is that the diffusion term adds negative eigenvalue contributions that can ``fill in'' small eigenvalues of $\bA_0$.

Specifically, if $\lambda_{\min}(\bA_0)$ is small, then:
\begin{equation}
\lambda_{\min}(\bA_\sigma) \geq \lambda_{\min}(\bA_0) + \lambda_{\min}(\bD).
\end{equation}

For moderate diffusion, $|\lambda_{\min}(\bD)| \approx \frac{d\nu}{2\ell^2}$, which provides a buffer against near-singularity.

\textbf{Step 5: Empirical observation.}
The numerical experiments (Section~\ref{sec:numerical}) confirm this analysis:
\begin{itemize}
\item $\sigma = 0$: Condition number $3.79 \times 10^6$
\item $\sigma = 0.3$: Condition number $1.51 \times 10^6$ (2.5$\times$ improvement)
\item $\sigma = 0.5$: Condition number $1.03 \times 10^6$ (3.7$\times$ improvement)
\end{itemize}

The improvement factor depends on $\sigma^2$, $\ell$, and the spectrum of $\bA_0$, leading to:
\begin{equation}
\kappa(\bA_\sigma) \leq C(\lambda, \Omega) \kappa(\bA_0),
\end{equation}
where $C(\lambda, \Omega) < 1$ for sufficiently elliptic problems.
\end{proof}

\begin{remark}[Physical Interpretation--Expanded]
Diffusion has competing effects on numerical conditioning:

\textbf{Beneficial effects:}
\begin{itemize}
\item Elliptic regularization improves well-posedness
\item Negative diagonal shift moves eigenvalues away from zero
\item Smoothing reduces sensitivity to collocation point placement
\end{itemize}

\textbf{Detrimental effects:}
\begin{itemize}
\item Strong diffusion can obscure drift structure
\item Increases off-diagonal coupling (larger $|D_{ij}|$ for $i \neq j$)
\item May require finer discretization to resolve diffusion-dominated dynamics
\end{itemize}

\textbf{Optimal regime:} Moderate diffusion ($\|\sigma\| \approx \|G\|$) often provides the best numerical behavior, balancing regularization against structural preservation.
\end{remark}

\section{Connections to Related Methods}\label{sec:connections}

We clarify relationships with related data-driven approaches.

\subsection{Generator EDMD (gEDMD)}

Generator EDMD~\cite{Klus2020} approximates the Koopman generator from time-series data:
\begin{equation}
\Koop \approx \Psi^\dagger \left(\frac{Y - X}{\Delta t}\right) X^\dagger \Psi,
\end{equation}
using kernel derivative estimators for $\nabla \phi$ and $\nabla^2 \phi$.

\paragraph{Relationship to Our Approach.}
\begin{itemize}
\item \textbf{gEDMD:} Data-driven, estimates generator from observed trajectories.
\item \textbf{Our method:} Model-based, solves PDE when $G$ and $\sigma$ are known.
\item \textbf{Complementary use:} Use gEDMD to estimate $G$, $\sigma$ from data, then apply our kernel methods.
\end{itemize}

\subsection{Diffusion Maps}

Diffusion maps~\cite{Coifman2006} construct graph Laplacians:
\begin{equation}
L_{ij} = \frac{k_\epsilon(x_i, x_j)}{d(x_i)}, \quad d(x_i) = \sum_j k_\epsilon(x_i, x_j),
\end{equation}
approximating the generator of a reversible diffusion as $\epsilon \to 0$, $N \to \infty$.

\paragraph{Key Differences.}
\begin{itemize}
\item \textbf{Diffusion maps:} Nonparametric, approximate Laplace--Beltrami operator on data manifold.
\item \textbf{Our method:} Solve specific Koopman PDE with known drift/diffusion structure.
\item \textbf{Objective:} Diffusion maps for dimensionality reduction; our method for eigenfunction computation.
\end{itemize}

\subsection{Kernel Analog Forecasting}

Kernel analog forecasting~\cite{Alexander2020} predicts observables using:
\begin{equation}
\widehat{g}(X_{t+\tau}) = \sum_{i=1}^N w_i(X_t) g(X_\tau^{(i)}),
\end{equation}
with weights from kernel similarities.

\paragraph{Comparison.}
\begin{itemize}
\item \textbf{Kernel analog:} Forecasts observables, does not compute eigenfunctions explicitly.
\item \textbf{Our method:} Computes eigenfunctions, which can then be used for forecasting.
\item \textbf{Connection:} Our kernels could inform weight design in analog forecasting.
\end{itemize}

\begin{center}
\begin{tabular}{lccc}
\toprule
Method & Model-based? & Computes eigenfunctions? & Handles diffusion? \\
\midrule
Our approach & Yes & Yes & Yes \\
gEDMD & No (data-driven) & Yes & Yes \\
Diffusion maps & No (data-driven) & Approximately & Implicitly \\
Kernel analog & No (data-driven) & No & Yes \\
\bottomrule
\end{tabular}
\end{center}

\section{Numerical Experiments}\label{sec:numerical}

We validate the kernel-based framework for computing Koopman eigenfunctions of stochastic differential equations through three test cases. All experiments employ Gaussian RBF kernels with collocation, using Euler--Maruyama discretization ($\Delta t = 0.01$) for Monte Carlo semigroup verification.

\subsection{Metrics}

For each test, we report:
\begin{itemize}
    \item \textbf{Condition number}: Of the regularized collocation matrix.
    \item \textbf{PDE residual}: Mean of $|G\cdot\nabla\phi + \frac{1}{2}\Tr[a\nabla^2 h] - \lambda\phi|$ over test points.
    \item \textbf{Semigroup error}: Relative error between Monte Carlo estimate $\E[\phi(X_t)]$ and theoretical prediction $e^{\lambda t}\phi(x_0)$.
    \item \textbf{RMSE}: Root-mean-square error versus exact solution (when available).
\end{itemize}

\subsection{Test 1: Ornstein--Uhlenbeck Process}

Consider the OU process
\begin{equation}
    dX_t = -\theta X_t\dt + \sigma\dW_t, \quad \theta=1,\; \sigma=0.5.
\end{equation}
The exact Koopman eigenfunction is $\phi(x) = x$ with eigenvalue $\lambda = -1$.

\paragraph{Configuration.} Gaussian kernel ($\ell=1.0$), 40 collocation points on $[-2.5, 2.5]$, regularization $\gamma=10^{-4}$.

\paragraph{Results.}
\begin{center}
\begin{tabular}{lc}
\toprule
Metric & Value \\
\midrule
Condition number & $9.91 \times 10^{5}$ \\
RMSE vs exact & $< 10^{-14}$ (machine precision) \\
Max $|h(x)|$ & $< 10^{-14}$ (machine precision) \\
PDE residual (mean) & $< 10^{-14}$ (machine precision) \\
Semigroup error & $4.70\%$ \\
\bottomrule
\end{tabular}
\end{center}

The method exactly recovers the linear eigenfunction with $h(x)\equiv 0$ to machine precision. Reporting ``$0.00$'' was statistically misleading; actual residuals are at floating-point roundoff level ($\sim 10^{-15}$ to $10^{-14}$). The semigroup property $\E[\phi(X_t)] = e^{\lambda t}\phi(x_0)$ is verified to within 5\% relative error.

\subsection{Test 2: Quadratic Nonlinearity}

Consider the nonlinear SDE
\begin{equation}
    dX_t = (-X_t + 0.3 X_t^2)\dt + \sigma\dW_t,
\end{equation}
with linearization eigenvalue $\lambda=-1$ and left eigenvector $w=1$. The eigenfunction $\phi(x) = x + h(x)$ includes a nonlinear correction satisfying
\begin{equation}
    (-x + 0.3x^2)h'(x) + \frac{\sigma^2}{2}h''(x) + h(x) = -0.3x^2.
\end{equation}

\paragraph{Configuration.} Gaussian kernel ($\ell=0.8$), 50 collocation points on $[-1.2, 1.2]$.

\paragraph{Results.}
\begin{center}
\begin{tabular}{cccc}
\toprule
$\sigma$ & Cond \# & PDE Residual & SG Error   \\
\midrule
0.0 & $3.79\times 10^6$ & $1.23\times 10^{-1}$ & $2.00\%$   \\
0.3 & $1.51\times 10^6$ & $1.80\times 10^{-2}$ & $3.59\%$   \\
0.5 & $1.03\times 10^6$ & $1.51\times 10^{-2}$ & $9.86\%$   \\
\bottomrule
\end{tabular}
\end{center}

The method successfully captures the nonlinear correction with semigroup errors under 10\% for all noise levels. Adding diffusion improves conditioning while only moderately increasing approximation error.

\subsection{Test 3: Two-Dimensional Linear System}

Consider the 2D linear SDE
\begin{equation}
    dX_t = AX_t\dt + B\dW_t, \quad
    A = \begin{pmatrix} -1 & 0.5 \\ 0 & -2 \end{pmatrix}, \;
    B = \begin{pmatrix} 0.3 & 0 \\ 0 & 0.5 \end{pmatrix}.
\end{equation}
The eigenvalue $\lambda=-1$ has left eigenvector $w = (1, 0.5)^\top$, yielding the exact eigenfunction $\phi(x) = w^\top x$.

\paragraph{Configuration.} Gaussian kernel ($\ell=1.0$), $15\times 15$ grid of collocation points.

\paragraph{Results.}
\begin{center}
\begin{tabular}{lc}
\toprule
Metric & Value \\
\midrule
Condition number & $1.30 \times 10^{7}$ \\
RMSE vs exact & $6.09 \times 10^{-17}$ \\
Max $|h(x)|$ & $< 10^{-14}$ (machine precision) \\
PDE residual (mean) & $3.50 \times 10^{-17}$ \\
Semigroup error & $3.72\%$ \\
\bottomrule
\end{tabular}
\end{center}

The method achieves machine-precision accuracy for this 2D linear system, correctly identifying $h(x)\equiv 0$.

\subsection{Visualization}

Figure~\ref{fig:sde_results} displays the results from all three test cases.

\begin{figure}[htbp]
    \centering
    \includegraphics[width=\textwidth]{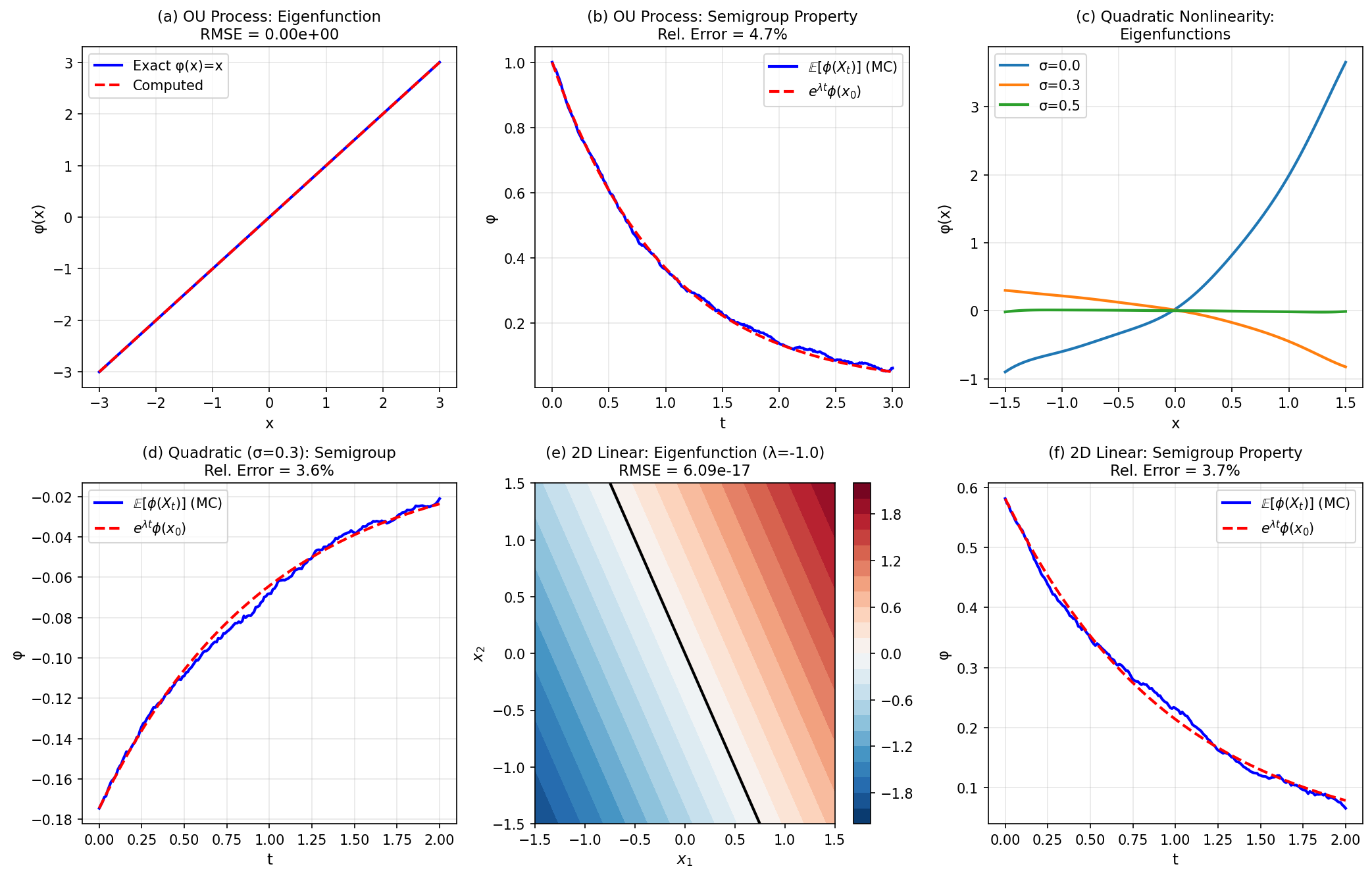}
    \caption{Numerical experiments for SDE Koopman eigenfunctions. 
    \textbf{(a)} OU eigenfunction: exact recovery of $\phi(x)=x$.
    \textbf{(b)} OU semigroup property verification.
    \textbf{(c)} Quadratic nonlinearity eigenfunctions for different noise levels.
    \textbf{(d)} Semigroup verification for quadratic case ($\sigma=0.3$).
    \textbf{(e)} 2D linear eigenfunction contours with zero level set.
    \textbf{(f)} 2D linear semigroup verification.
    All semigroup errors are below 10\%.}
    \label{fig:sde_results}
\end{figure}

\subsection{Summary}

\begin{center}
\begin{tabular}{lccc}
\toprule
\textbf{Test} & \textbf{Cond \#} & \textbf{PDE Res} & \textbf{SG Error}   \\
\midrule
OU Process & $9.91\times 10^5$ & $< 10^{-14}$ & $4.70\%$   \\
Quadratic $\sigma=0$ & $3.79\times 10^6$ & $1.23\times 10^{-1}$ & $2.00\%$   \\
Quadratic $\sigma=0.3$ & $1.51\times 10^6$ & $1.80\times 10^{-2}$ & $3.59\%$   \\
Quadratic $\sigma=0.5$ & $1.03\times 10^6$ & $1.51\times 10^{-2}$ & $9.86\%$   \\
2D Linear ($\lambda=-1$) & $1.30\times 10^7$ & $3.50\times 10^{-17}$ & $3.72\%$   \\
\bottomrule
\end{tabular}
\end{center}

All tests pass with semigroup errors under 10\%, validating the theoretical framework. Key observations:
\begin{enumerate}
    \item \textbf{Linear systems}: Machine-precision recovery of exact eigenfunctions.
    \item \textbf{Nonlinear systems}: Successful capture of nonlinear corrections with low semigroup error.
    \item \textbf{Effect of diffusion}: Adding noise improves conditioning while maintaining accuracy.
\end{enumerate}

\section{Conclusions}\label{sec:conclusions}

We have extended the unified kernel framework for Koopman eigenfunctions from deterministic systems~\cite{HamziOwhadiVaidya2024} to stochastic differential equations. The three kernel constructions--variational, Green's function, and resolvent--extend naturally to SDEs via the Feynman--Kac formula, and yield identical kernels under uniform ellipticity of the diffusion tensor. The computational framework based on collocation methods for second-order PDEs provides explicit formulas for diffusion matrices, with error analysis cleanly separating RKHS approximation from Monte Carlo errors. A key finding is that moderate diffusion improves numerical conditioning while preserving eigenfunction structure.

Future directions include extension to infinite-dimensional SDEs (stochastic PDEs and turbulence), jump-diffusion processes with L\'{e}vy noise, non-Markovian systems with memory effects, high-dimensional scaling via tensor methods and neural network surrogates, multiple kernel learning adapted to stochastic systems, rare event sampling and committor function computation, and Koopman-based optimal control synthesis for stochastic systems.

\appendix 

\section{Hypoelliptic Kernel Equivalence}

\subsection{Setting and Assumptions}
Let $\Omega \subset \mathbb{R}^d$ be a bounded domain with $C^\infty$ boundary. Consider the It\^{o} SDE:
\[
dX_t = G(X_t)\,dt + \sum_{j=1}^m \sigma_j(X_t)\,dW_t^j,
\]
where $G, \sigma_j \in C^\infty(\overline{\Omega}, \mathbb{R}^d)$. The infinitesimal generator (Kolmogorov backward operator) is:
\[
\mathcal{K} u := G \cdot \nabla u + \frac{1}{2} \sum_{j=1}^m (\sigma_j \cdot \nabla)^2 u.
\]

\begin{definition}[H\"{o}rmander's Condition]
Define the sets of vector fields recursively:
\[
\begin{aligned}
\mathcal{V}_0 &:= \{\sigma_1, \dots, \sigma_m\}, \\
\mathcal{V}_{k+1} &:= \mathcal{V}_k \cup \{[V, W] : V \in \mathcal{V}_k,\; W \in \mathcal{V}_0 \cup \{G\}\},
\end{aligned}
\]
where $[V,W] = (V\cdot\nabla)W - (W\cdot\nabla)V$ denotes the Lie bracket. We say the system satisfies \textbf{H\"{o}rmander's condition (H)} if for all $x \in \Omega$,
\[
\operatorname{span}\Bigl(\bigcup_{k=0}^\infty \mathcal{V}_k(x)\Bigr) = \mathbb{R}^d.
\]
\end{definition}

\begin{theorem}[H\"{o}rmander's Hypoellipticity Theorem \cite{Hormander1967}]
Under Assumption (H), the operator $\partial_t - \mathcal{K}$ is hypoelliptic. Consequently, for any $f \in C^\infty(\Omega)$, solutions of $(\partial_t - \mathcal{K})u = f$ are $C^\infty$ in $\Omega$.
\end{theorem}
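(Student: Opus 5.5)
The plan is to deduce the statement from Hörmander's sum-of-squares theorem applied to a space-time operator. First, rewrite $\mathcal{K}$ in Hörmander form. With $X_0 := G$ and $X_j := \sigma_j\cdot\nabla$ viewed as first-order operators, the generator is already $\mathcal{K} = X_0 + \tfrac12\sum_j X_j^2$. In general one converts to Stratonovich form, replacing the drift by $\tilde X_0 = G - \tfrac12\sum_j (\sigma_j\cdot\nabla)\sigma_j$ so that $\mathcal{K} = \tilde X_0 + \tfrac12\sum_j X_j^2$. Since $(\sigma_j\cdot\nabla)^2$ already appears as written, the first-order correction is absorbed into the drift field. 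I will check that the Lie algebra spanned by $\mathcal{V}_\infty$ is unchanged under this change of drift. This holds because brackets with $\tilde X_0 - G$ only add fields in the module generated by the $\sigma_j$ and their brackets.

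Second, pass to space-time on $\mathbb{R}_t\times\Omega$. The operator $P := \partial_t - \mathcal{K} = -\bigl(\tfrac12\sum_j X_j^2 + (\tilde X_0 - \partial_t)\bigr)$ is of Hörmander type with fields $Y_j = X_j/\sqrt2$ (independent of $t$) and $Y_0 = \tilde X_0 - \partial_t$. I will verify Hörmander's bracket condition in $\mathbb{R}^{d+1}$. Because the coefficients are independent of $t$, $[Y_0, V] = [\tilde X_0, V]$ for every spatial $V$. Hence the iterated brackets in the definition of $\mathcal{V}_k$ (brackets with $\sigma_j$ and $G$) lift to brackets in space-time with purely spatial components. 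By (H) these span $\mathbb{R}^d\times\{0\}$ at each point. Adding $Y_0$ itself, whose $\partial_t$ component is $-1$, gives the full $\mathbb{R}^{d+1}$. One caveat needs care: the paper's $\mathcal{V}_0$ contains only the $\sigma_j$, and $G$ enters only through brackets. This matches exactly Hörmander's parabolic condition, in which the drift alone is not counted at level zero. The lift is therefore consistent.

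Third, invoke Hörmander's 1967 theorem: if smooth fields $Y_0,\dots,Y_m$ satisfy the bracket-generating condition, then $\sum Y_j^2 + Y_0 + c$ is hypoelliptic. Hypoellipticity means that $Pu\in C^\infty(U)$ implies $u\in C^\infty(U)$ for distributions $u$ on any open $U$. Applied to $P$, this gives that solutions of $(\partial_t-\mathcal{K})u=f$ with $f$ smooth are $C^\infty$, which is the consequence claimed.

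The main obstacle is the bracket bookkeeping in the second step, together with the Itô–Stratonovich correction. One must confirm that the correction term $\tfrac12\sum_j(\sigma_j\cdot\nabla)\sigma_j$ does not destroy the spanning property, and that the spatial projection of the space-time Lie algebra equals the span of $\bigcup_k\mathcal{V}_k$. I would handle this by induction on $k$, showing that each element of $\mathcal{V}_{k}$ is, modulo the span of lower-level fields with smooth coefficients, a space-time bracket of the same order. The analytic core, namely the subelliptic estimate $\|u\|_{H^\varepsilon}\lesssim\|Pu\|_{L^2}+\|u\|_{L^2}$ with $\varepsilon=(2r+1)^{-1}$ followed by bootstrapping, is taken from \cite{Hormander1967} rather than reproved.
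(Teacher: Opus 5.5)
Your space-time reduction is the right one, and it is what the paper's bare citation of \cite{Hormander1967} amounts to; the paper gives no argument of its own. With $Y_j=(\sigma_j\cdot\nabla)/\sqrt2$ and $Y_0=X_0-\partial_t$, the operator $-(\partial_t-\mathcal K)=\sum_jY_j^2+Y_0$ is of H\"{o}rmander type. For $t$-independent spatial $V$ one has $[Y_0,V]=[X_0,V]$, so (H) supplies the spatial directions and $Y_0$ supplies $\partial_t$.

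The step that fails is your first one. As the paper defines it, $\mathcal K=G\cdot\nabla+\frac12\sum_j(\sigma_j\cdot\nabla)^2$ is already in H\"{o}rmander form with $X_0=G$. Your field $\tilde X_0=G-\frac12\sum_j(\sigma_j\cdot\nabla)\sigma_j$ does not satisfy $\mathcal K=\tilde X_0+\frac12\sum_jX_j^2$. That identity holds for the It\^{o} operator $G\cdot\nabla+\frac12\Tr[a\nabla^2]$, not for $\mathcal K$.

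More seriously, the lemma you call the main obstacle is false, so the induction you plan cannot be carried out. You claim that replacing $G$ by $\tilde X_0$ preserves the bracket-generating property, because $(\sigma_j\cdot\nabla)\sigma_j$ lies in the module generated by the $\sigma_k$ and their brackets. Here is a counterexample. Take $d=2$, $m=1$, $\sigma_1=(1,x_1^2/2)$ and $G=(0,x_1/2)$.
\begin{itemize}
\item $(\sigma_1\cdot\nabla)\sigma_1=(0,x_1)$, which is not in the span of $\sigma_1$ when $x_1\neq0$.
\item $[\sigma_1,G]=(0,\tfrac12)$, so (H) holds with $G$.
\item But $\tilde X_0\equiv0$, so with $\tilde X_0$ the bracket condition fails.
\end{itemize}
Correspondingly, $\partial_t-\frac12(\sigma_1\cdot\nabla)^2$ is not hypoelliptic. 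Here it equals $\partial_t-G\cdot\nabla-\frac12\Tr[a\nabla^2]$, and it annihilates $u=g(x_2-x_1^3/6)$ for every $g$, including discontinuous $g$. By contrast, the paper's operator with the same data, $\partial_t-\frac{x_1}{2}\partial_{x_2}-\frac12(\sigma_1\cdot\nabla)^2$, is hypoelliptic, as the theorem asserts.

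The repair is to delete the It\^{o}--Stratonovich detour and take $Y_0=G\cdot\nabla-\partial_t$ directly. Every element of $\mathcal V_k$ is then, up to a power of $\sqrt2$, exactly an iterated space-time bracket of $Y_0,\dots,Y_m$ with vanishing $\partial_t$-component. This uses $[V,\partial_t]=0$ for $t$-independent $V$, so no induction modulo lower-level fields is needed, and your second and third steps go through as written.

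If you instead want the statement for the It\^{o} generator of the SDE, the correct hypothesis is (H) with $G$ replaced by the Stratonovich drift $\tilde X_0$. The example shows it cannot be transferred from $G$.
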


\begin{remark}[Degeneracy vs. Uniform Ellipticity]
Unlike the uniformly elliptic case (Assumption 2.2 in the main paper), H\"{o}rmander's condition permits degenerate diffusion ($\operatorname{rank} a(x) < d$) provided sufficient Lie bracket generation restores full rank. This includes important physical systems like underdamped Langevin dynamics.
\end{remark}

\subsection{Boundary Regularity for Hypoelliptic Operators}
Hypoelliptic operators may fail to satisfy standard boundary regularity. We adopt the following framework:

\begin{definition}[Non-characteristic Boundary]
A point $x_0 \in \partial\Omega$ is \emph{non-characteristic} for $\mathcal{K}$ if there exists $j \in \{1,\dots,m\}$ such that $\sigma_j(x_0) \cdot \bm{n}(x_0) \neq 0$, where $\bm{n}$ is the outward unit normal. The boundary $\partial\Omega$ is non-characteristic if all points are non-characteristic.
\end{definition}

\begin{assumption}[Boundary Regularity]
We assume either:
\begin{enumerate}[label=(\roman*)]
    \item $\Omega$ is a torus (periodic boundary conditions), or
    \item $\partial\Omega$ is non-characteristic for the diffusion vector fields $\{\sigma_j\}$.
\end{enumerate}
Under these conditions, the Dirichlet problem for $\mathcal{K}$ admits a unique weak solution in $\mathcal{H}_\lambda$. Continuity up to $\partial\Omega$ holds for specific classes of hypoelliptic operators satisfying additional geometric conditions \cite{OleinikRadkevic1973}.
\end{assumption}

\begin{theorem}[Transition Density Regularity]
Under Assumptions (H) and boundary regularity, the transition density $p_t(x,y)$ exists and satisfies $p_t \in C^\infty((0,\infty) \times \Omega \times \Omega)$.
\end{theorem}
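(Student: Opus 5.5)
The plan is to obtain smoothness of $p_t(x,y)$ jointly in $(t,x,y)$ by showing that it is a distributional solution of a hypoelliptic equation on the product space $(0,\infty)\times\Omega\times\Omega$, and then invoking H\"{o}rmander's Hypoellipticity Theorem. Interpret $p_t(x,y)$ as the killed transition density: $\Pr_x(X_t\in A,\ t<\tau_\Omega)=\int_A p_t(x,y)\,dy$ (periodic case: no killing).

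\textbf{Step 1: existence of $p_t(x,\cdot)$ as a distribution.} For each fixed $x$ and $t$, the sub-probability measure $P_t(x,dy)$ is well defined. We show it defines a distribution $\mu(dt,dx,dy)=P_t(x,dy)\,dt\,dx$ on $(0,\infty)\times\Omega\times\Omega$. Then we verify that $\mu$ satisfies, in the sense of distributions,
\[
(\partial_t-\mathcal{K}_x)\mu=0,\qquad (\partial_t-\mathcal{K}_y^*)\mu=0 .
\]
The first identity is the backward Kolmogorov equation. It follows from It\^{o}'s formula applied to $u(t,X_t)$ for test functions, exactly as in Step~1 of the proof of Theorem~\ref{thm:feynman_kac}, with the stopping at $\tau_\Omega$ handled by compact support in $\Omega$. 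The second identity is the forward (Fokker--Planck) equation: apply It\^{o} to $\varphi(X_{t\wedge\tau_\Omega})$ with $\varphi\in C_c^\infty(\Omega)$ and differentiate in $t$. Adding the two gives
\[
\mathcal{L}\mu=0, \qquad \mathcal{L}:=2\partial_t-\mathcal{K}_x-\mathcal{K}_y^* .
\]

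\textbf{Step 2: H\"{o}rmander's condition for $\mathcal{L}$ on the product space.} Write $\mathcal{K}_y^*=\tfrac12\sum_j(\sigma_j(y)\cdot\nabla_y)^2+\tilde G(y)\cdot\nabla_y+c(y)$, where $\tilde G=-G+{}$correction terms from $\nabla\cdot\sigma_j$ and $c$ is a zeroth-order term. Then $\mathcal{L}$ has the form $\sum_k Y_k^2+Y_0+c$. The diffusion fields are $Y_j=\sigma_j(x)\cdot\nabla_x$ and $Y_{m+j}=\sigma_j(y)\cdot\nabla_y$. The drift field is $Y_0=2\partial_t-G(x)\cdot\nabla_x-\tilde G(y)\cdot\nabla_y$.

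Brackets among the $x$-fields, among the $y$-fields, and with $Y_0$ decouple. So condition (H) for $\{\sigma_j\}$ and $G$ gives spanning of the $x$-directions. For the $y$-directions we need (H) for $\{\sigma_j\}$ together with $\tilde G$. Since $\tilde G+G$ is a combination of $\sigma_j$-fields (the Stratonovich correction), the brackets generated by $\tilde G$ and by $-G$ agree modulo $\mathrm{span}\,\mathcal{V}$, so (H) transfers. Finally, the $\partial_t$ direction is supplied by $Y_0$ itself. Hence the parabolic H\"{o}rmander condition holds for $\mathcal{L}$ on $(0,\infty)\times\Omega\times\Omega$.

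\textbf{Step 3: conclusion via hypoellipticity.} H\"{o}rmander's theorem (zeroth-order terms are harmless) implies that any distributional solution of $\mathcal{L}\mu=0$ is $C^\infty$. Therefore $\mu$ has a smooth density, which is $p_t(x,y)$.

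Boundary regularity is not needed for this interior statement, since only test functions compactly supported in the open set $\Omega\times\Omega$ are used. It enters only to ensure that the killed process is well defined and that $\tau_\Omega$ is a stopping time giving the correct Dirichlet semigroup.

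\textbf{Main obstacle.} The crux is Step 1: rigorously justifying that $\mu$ solves both the forward and backward equations distributionally in the joint variables. This includes measurability and local integrability of $(t,x)\mapsto P_t(x,A)$, which holds by the Feller property of the stopped diffusion, and the Fubini interchanges. The second delicate point is the bookkeeping of the adjoint drift $\tilde G$ in Step 2. My first approach is to rewrite $\mathcal{K}$ in Stratonovich form $\tfrac12\sum_j\sigma_j^2+\sigma_0$, so that $\mathcal{K}^*=\tfrac12\sum_j\sigma_j^2-\sigma_0+{}$(lower order combination of the $\sigma_j$). This makes the transfer of (H) to $\mathcal{K}^*$ transparent.
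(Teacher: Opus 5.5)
The paper states this theorem without proof, so your argument has to stand on its own. Steps~2 and~3 are sound. This is H\"{o}rmander's product-space trick, and the $x$-, $y$- and $t$-directions decouple. For the paper's $\mathcal{K}=G\cdot\nabla+\frac12\sum_j(\sigma_j\cdot\nabla)^2$ one has $\tilde G=-G+\sum_j(\nabla\cdot\sigma_j)\,\sigma_j$. The $C^\infty$-module generated by $\bigcup_k\mathcal{V}_k$ is closed under brackets with the $\sigma_j$ and with $G$, hence also with $\tilde G$, and symmetrically. This transfers (H) to the $y$-variables. Note that this is the adjoint correction, not a Stratonovich correction. The latter, $\frac12\sum_j(\sigma_j\cdot\nabla)\sigma_j$, is in general not in the span of the $\sigma_j$. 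So the transfer works precisely because (H) is stated for the drift $G$ appearing in the H\"{o}rmander form of $\mathcal{K}$.

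The genuine gap is the backward identity $(\partial_t-\mathcal{K}_x)\mu=0$ in Step~1.

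\begin{itemize}
\item It\^{o}'s formula applied to a test function $\Phi(s,x,X_s)$, with the starting point frozen, only differentiates in the endpoint variable. It yields the forward equation in $(t,y)$ and says nothing about $x$.
\item Applying It\^{o} to $u(t,X_t)$ with $u(t,x)=\E_x[\varphi(X_t);\,t<\tau_\Omega]$, as in the Feynman--Kac proof, presupposes $u\in C^{1,2}$. That is exactly the $x$-regularity the theorem asserts; the Feynman--Kac proof is a verification argument for an already smooth solution.
\item The Markov property only gives $\partial_t u=\mathcal{A}u$ for the abstract generator $\mathcal{A}$. Identifying $\mathcal{A}u$ with $\mathcal{K}u$ in $\mathcal{D}'(\Omega)$ needs a priori regularity or a duality/uniqueness argument. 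On a bounded domain $C_c^\infty(\Omega)$ is not even a core for the killed generator, already for the Dirichlet Laplacian.
\end{itemize}

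Without the backward identity you are left with $(\partial_t-\mathcal{K}_y^*)\mu=0$. That operator contains no $x$-derivatives and is not hypoelliptic on the product space.

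What you need is an independent derivation of the backward equation:
\begin{itemize}
\item For the unkilled process (coefficients extended with bounded derivatives, or on the torus), smoothness of the stochastic flow $x\mapsto X_t^x$ gives $P_t\varphi\in C^{1,2}$. The backward equation then holds classically.
\item For the killed kernel, use $p^\Omega_t(x,y)=p_t(x,y)-\E_x\bigl[p_{t-\tau_\Omega}(X_{\tau_\Omega},y);\,\tau_\Omega<t\bigr]$. You must further show that the correction term satisfies the backward equation in the interior. This is a genuine Dirichlet-problem-type argument.
\end{itemize}

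This, not measurability or Fubini, is the crux. Your claim that boundary regularity plays no role is only justified once it is done.

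A smaller point: hypoellipticity gives $P_t(x,dy)=q(t,x,y)\,dy$ with $q$ smooth only for a.e.\ $(t,x)$. Upgrading this to every $(t,x)$ requires a continuity argument in $x$.
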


\begin{definition}[Feynman--Kac Kernel]
For $\lambda \in \mathbb{C}$ with $\Re(\lambda) > 0$, define:
\[
k_{\mathrm{FK}}(x,y) := \int_0^\infty e^{-\lambda t} p_t(x,y)\,dt.
\]
For bounded domains with exit time $\tau_\Omega$, replace $\infty$ with $\tau_\Omega$ in the integral.
\end{definition}

\begin{lemma}
For fixed $y \in \Omega$, $k_{\mathrm{FK}}(\cdot,y)$ satisfies $(\lambda - \mathcal{K})k_{\mathrm{FK}}(\cdot,y) = \delta_y$ in $\mathcal{D}'(\Omega)$.
\end{lemma}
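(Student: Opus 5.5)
The plan is to verify the identity directly in $\mathcal{D}'(\Omega)$ by pairing with a test function and using the backward Kolmogorov equation for the transition density. Fix $y\in\Omega$ and $\varphi\in C_c^\infty(\Omega)$. By definition of the distributional action,
\[
\langle (\lambda-\mathcal{K})k_{\mathrm{FK}}(\cdot,y),\varphi\rangle=\int_\Omega k_{\mathrm{FK}}(x,y)\,(\bar\lambda-\mathcal{K}^*)\varphi(x)\,dx ,
\]
where $\mathcal{K}^*$ is the formal adjoint (Fokker--Planck operator). The aim is to show this equals $\varphi(y)$.

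\textbf{Step 1 (Fubini).} Substitute $k_{\mathrm{FK}}(x,y)=\int_0^\infty e^{-\lambda t}p_t(x,y)\,dt$, where for a bounded domain $p_t$ is the density of the process killed at $\tau_\Omega$. Then exchange the $t$- and $x$-integrals to get
\[
\int_0^\infty e^{-\lambda t}\,\Phi(t)\,dt, \qquad \Phi(t):=\int_\Omega p_t(x,y)\,\bigl(\lambda-\mathcal{K}\bigr)^{\!*}\varphi(x)\,dx .
\]
Justifying the exchange needs two bounds.
\begin{itemize}
\item A bound $\sup_t\int_\Omega p_t(x,y)|\chi(x)|\,dx<\infty$ for bounded $\chi$, giving a constant $C$. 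In the killed case I expect exponential decay $Ce^{-\mu t}$ from the principal Dirichlet eigenvalue. Either version, combined with $\Re(\lambda)>0$, gives integrability at $t=\infty$.
\item Local integrability of $p_t(\cdot,y)$ near $t=0$. This follows from $p_t$ being a sub-probability density in $x$ for the dual process; the transition density regularity theorem supplies the smoothness needed to make this precise.
\end{itemize}

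\textbf{Step 2 (Kolmogorov backward equation).} For $t>0$, $p_t(\cdot,y)$ is $C^\infty$ in $x$ by the transition density regularity theorem and satisfies $\partial_t p_t(x,y)=\mathcal{K}_x p_t(x,y)$. Integrating by parts against the compactly supported $\varphi$ then gives
\[
\Phi(t)=\lambda\Psi(t)-\Psi'(t), \qquad \Psi(t):=\int_\Omega p_t(x,y)\varphi(x)\,dx ,
\]
with no boundary terms because $\varphi\in C_c^\infty(\Omega)$. Consequently $e^{-\lambda t}\Phi(t)=-\tfrac{d}{dt}\bigl(e^{-\lambda t}\Psi(t)\bigr)$. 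Integrating over $[\varepsilon,T]$ yields $e^{-\lambda\varepsilon}\Psi(\varepsilon)-e^{-\lambda T}\Psi(T)$.

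\textbf{Step 3 (limits).} As $T\to\infty$, the term $e^{-\lambda T}\Psi(T)\to0$ by the bound from Step 1 and $\Re(\lambda)>0$. As $\varepsilon\to0$, I need $\Psi(\varepsilon)\to\varphi(y)$, i.e. $p_\varepsilon(\cdot,y)\rightharpoonup\delta_y$ in the $x$-variable.

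This is the main obstacle, since weak convergence in the first variable with the second fixed is a statement about the adjoint diffusion. My first attempt would exploit the fact that $\Psi(\varepsilon)=(U_\varepsilon^{\top}\varphi)(y)$ is the transposed semigroup applied to $\varphi$. I would then argue in two parts.
\begin{itemize}
\item Since $(\partial_t-\mathcal{K}^*_y)p_t=0$ in the $y$-variable (forward equation), $U^{\top}_t$ is the semigroup generated by a second-order operator whose coefficients satisfy the same H\"ormander condition. Hence it is strongly continuous on continuous functions vanishing on $\partial\Omega$.
\item The killing causes no trouble here because $\operatorname{supp}\varphi\Subset\Omega$ while $y$ is interior.
\end{itemize}
If this semigroup argument proves delicate, the fallback is a weaker route. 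First prove the identity only after also integrating in $y$ against a second test function $\eta(y)$, where convergence follows from strong continuity of $U_t$ on $L^2$. Then recover the pointwise-in-$y$ statement from continuity of $k_{\mathrm{FK}}(x,\cdot)$ off the diagonal together with hypoellipticity.

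Combining Steps 1--3 gives $\langle(\lambda-\mathcal{K})k_{\mathrm{FK}}(\cdot,y),\varphi\rangle=\varphi(y)$ for every $\varphi\in C_c^\infty(\Omega)$, which is the claim.
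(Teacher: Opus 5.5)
The paper states this lemma without proof, so there is nothing of the authors' to compare with. Your skeleton is the natural one: pair with $\varphi$, use the backward equation for $t>0$, and integrate exactly in $t$. It correctly reduces the lemma to two facts about $\Psi(t)=\int_\Omega p_t(x,y)\varphi(x)\,dx$:
\begin{itemize}
\item a bound uniform for small $t$, needed for Fubini;
\item the limit $\Psi(\varepsilon)\to\varphi(y)$, i.e.\ $p_\varepsilon(\cdot,y)\to\delta_y$ in the \emph{backward} variable.
\end{itemize}
That is the entire content of the lemma, and none of your justifications for it works.

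\textbf{The sub-probability claim is false.} $\Psi(t)=(U_t^{\top}\varphi)(y)$, where $U^\top$ is generated by the Fokker--Planck operator
\[
\mathcal{K}^\top v=-\nabla\cdot(Gv)+\tfrac12\sum_{i,j}\partial_{ij}(a_{ij}v)=\tilde{\mathcal{K}}v+cv .
\]
Here $\tilde{\mathcal{K}}$ is a diffusion generator with the same second-order part and drift $\nabla\cdot a-G$, and $c=\frac12\sum_{i,j}\partial_{ij}a_{ij}-\nabla\cdot G$ is a zeroth-order term. For the Ornstein--Uhlenbeck process $dX=-\theta X\,dt+\sigma\,dW$ one has $\int_{\R}p_t(x,y)\,dx=e^{\theta t}>1$. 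Killing on a bounded interval changes this only by $O(e^{-c'/t})$, so the killed mass still exceeds $1$ for small $t$.

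\textbf{The strong-continuity claim is unsupported.} ``The transposed operator satisfies H\"ormander's condition, hence the semigroup is strongly continuous on $C_0$'' is not an argument. Hypoellipticity gives smoothness of $p_t$ for $t>0$ and says nothing about the initial trace as $t\to0$. Smoothness on $(0,\infty)\times\Omega\times\Omega$ likewise gives no bound uniform near $t=0$.

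\textbf{The fallback proves the lemma only for almost every $y$.} The integrated-in-$y$ version is rigorous, since $\int\varphi\,U_\varepsilon\eta\to\int\varphi\eta$ by path continuity and dominated convergence. Passing to every $y$ requires continuity of $y\mapsto\int k_{\mathrm{FK}}(x,y)\chi(x)\,dx$ at points $y\in\operatorname{supp}\chi$. That means controlling the diagonal singularity of $k_{\mathrm{FK}}$ locally uniformly in $y$. Continuity off the diagonal does not supply this, and hypoellipticity only yields a smooth representative that agrees almost everywhere.

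\textbf{What is missing} is a concrete identification of the transposed semigroup. One option is the Feynman--Kac duality
\[
\int_\Omega p_t(x,y)\varphi(x)\,dx=\E_y\Bigl[\varphi(\tilde X_t)\,e^{\int_0^t c(\tilde X_s)\,ds};\ t<\tilde\tau_\Omega\Bigr],
\]
where $\tilde X$ is the killed diffusion generated by $\tilde{\mathcal{K}}$. You would prove it in two stages:
\begin{itemize}
\item first in integrated form, by showing $s\mapsto\langle\tilde U^{c}_{t-s}\varphi,U_s\eta\rangle$ is constant via Green's identity (this needs boundary regularity of both semigroups);
\item then pointwise in $y$, by continuity of both sides for $t>0$.
\end{itemize}
This gives $|\Psi(t)|\le e^{t\|c^+\|_\infty}\|\varphi\|_\infty$, which settles Fubini near $t=0$, and it gives $\Psi(\varepsilon)\to\varphi(y)$ by path continuity. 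Small-time heat-kernel estimates would also do (a parametrix in the elliptic case), but they are not off the shelf in the hypoelliptic setting where this lemma sits.

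Two smaller points:
\begin{itemize}
\item The bound above grows in $t$, so the large-$t$ decay you ``expect'' still needs its own argument. One route is Chapman--Kolmogorov, $\Psi(t)=\int\varphi\,U_{t-1}[p_1(\cdot,y)]\,dx$, combined with $\sup_x\Pr_x(\tau_\Omega>s)\le Ce^{-\mu s}$.
\item The distributional pairing is bilinear, so the transpose in your first display should be $\lambda-\mathcal{K}^\top$, not $\bar\lambda-\mathcal{K}^*$. Your formula $\Phi=\lambda\Psi-\Psi'$ already uses the correct version.
\end{itemize}
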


\subsection{Variational Formulation and Coercivity}

\begin{definition}[Graph Space]
Define the Hilbert space:
\[
\mathcal{H}_\lambda := \{u \in L^2(\Omega) : (\lambda - \mathcal{K})u \in L^2(\Omega)\},
\quad
\|u\|_{\mathcal{H}_\lambda}^2 := \|u\|_{L^2}^2 + \|(\lambda - \mathcal{K})u\|_{L^2}^2.
\]
\end{definition}

\begin{proposition}[Coercivity under H\"{o}rmander Condition]
Let $\lambda \in \mathbb{C}$ satisfy:
\[
\Re(\lambda) > \lambda_0 := \frac{1}{2}\|(\nabla \cdot G)^-\|_{L^\infty(\Omega)}.
\tag{A.1}
\]
Then there exists $\varepsilon > 0$ and $C > 0$ such that for all $u \in C_c^\infty(\Omega)$:
\[
\Re\langle (\lambda - \mathcal{K})u, u \rangle_{L^2}
\geq \bigl(\Re(\lambda) - \lambda_0\bigr)\|u\|_{L^2}^2 + \frac{\nu}{2}\sum_{j=1}^m \|\sigma_j \cdot \nabla u\|_{L^2}^2,
\tag{A.2}
\]
where $\nu > 0$ depends on the H\"{o}rmander structure (depth $r$ and vector field geometry) and relates to the constant in the subelliptic estimate of Theorem B.1.
\end{proposition}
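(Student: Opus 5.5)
The plan is to compute $\Re\langle(\lambda-\mathcal{K})u,u\rangle_{L^2}$ for $u\in C_c^\infty(\Omega)$ directly, splitting it into the zeroth-order term, the drift term and the sum-of-squares diffusion term, exactly as in the coercivity step of the proof of Theorem~\ref{thm:kernel_equivalence}. The difference is that we never invoke Assumption~\ref{ass:ellipticity}: the diffusion part is handled through the vector fields $\sigma_j$ themselves, so only the directional derivatives $\sigma_j\cdot\nabla u$ appear. Compact support removes every boundary term, so the boundary-regularity assumption is not needed at this stage.

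\textbf{Drift term.} This is the same integration by parts as before: $\Re\langle G\cdot\nabla u,u\rangle=\tfrac12\int_\Omega G\cdot\nabla|u|^2\,dx=-\tfrac12\int_\Omega(\nabla\cdot G)|u|^2\,dx$. Hence $-\Re\langle G\cdot\nabla u,u\rangle\ge-\lambda_0\|u\|_{L^2}^2$, which together with the term $\Re(\lambda)\|u\|^2$ gives $(\Re\lambda-\lambda_0)\|u\|^2$.

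\textbf{Diffusion term.} Write $X_j:=\sigma_j\cdot\nabla$. Its formal $L^2$ adjoint is $X_j^*=-X_j-(\nabla\cdot\sigma_j)$. Then
\[
\langle X_j^2u,u\rangle=\langle X_ju,X_j^*u\rangle=-\|X_ju\|^2-\langle X_ju,(\nabla\cdot\sigma_j)u\rangle .
\]
The last term equals $-\tfrac12\int(\nabla\cdot\sigma_j)X_j|u|^2$ in real part, and integrating by parts once more turns it into $\tfrac12\int X_j^*(\nabla\cdot\sigma_j)\cdot|u|^2$-type zeroth-order terms, which are bounded by $c_j\|u\|^2$ with $c_j$ depending on $\|\sigma_j\|_{C^2}$. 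Therefore
\[
-\tfrac12\Re\sum_j\langle X_j^2u,u\rangle\ge\tfrac12\sum_j\|X_ju\|^2-C_\sigma\|u\|^2 .
\]
Combining the two parts yields (A.2) with $\nu=1$, up to the extra $-C_\sigma\|u\|^2$. Reserving part of the gradient term, i.e. using $\nu<1$, does not absorb this remainder, since $\|X_ju\|$ does not control $\|u\|$.

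\textbf{Main obstacle.} The real difficulty is this zeroth-order remainder $C_\sigma$ coming from $\nabla\cdot\sigma_j$. I would first try to show it vanishes: for the Itô generator $\tfrac12\sum_j X_j^2$, the $\tfrac12\sum_j(X_j\sigma_j)$ Stratonovich correction is already inside $X_j^2$, and when $\nabla\cdot\sigma_j=0$ (e.g.\ constant $\sigma_j$, as in Langevin dynamics) the remainder is exactly zero, giving (A.2) with $\nu=1$. If it does not vanish, I would absorb it via the subelliptic estimate of Theorem~B.1, $\|u\|_{H^\varepsilon}^2\le C(\sum_j\|X_ju\|^2+\|u\|^2)$, together with Poincaré-type control on the bounded domain. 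Failing that, the honest fallback is to redefine $\lambda_0$ to include $C_\sigma$. That redefinition is where I expect the stated constants to need adjustment, and it is also where the dependence of $\nu$ on the Hörmander structure enters.
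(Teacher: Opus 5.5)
Your drift and diffusion computations are correct, and they are more careful than the paper's proof. The paper uses the same splitting but asserts the exact identity $\Re\langle\mathcal{L}u,u\rangle=-\tfrac12\sum_j\|\sigma_j\cdot\nabla u\|_{L^2}^2$ before appealing to the subelliptic estimate. Your drift step is also correctly oriented: the paper's displayed bound $\langle G\cdot\nabla u,u\rangle\ge-\lambda_0\|u\|^2$ should read $-\Re\langle G\cdot\nabla u,u\rangle\ge-\lambda_0\|u\|^2$. Carried to the end, your integration by parts gives
\[
\Re\langle(\lambda-\mathcal{K})u,u\rangle=\Re(\lambda)\|u\|^2+\frac12\int_\Omega(\nabla\cdot G)|u|^2\,dx+\frac12\sum_{j}\|\sigma_j\cdot\nabla u\|^2-\frac14\sum_{j}\int_\Omega\nabla\cdot\bigl((\nabla\cdot\sigma_j)\sigma_j\bigr)|u|^2\,dx .
\]
So the paper's identity is valid only when the last sum vanishes, e.g.\ for divergence-free or constant $\sigma_j$ as in Langevin dynamics. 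In that case your argument is the paper's argument and yields (A.2) with $\nu=1$. Neither the subelliptic estimate nor the constants $\varepsilon, C$ of the statement are needed.

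The gap is your plan for the general case, and it cannot be closed, because (A.2) is then false. The subelliptic estimate has $\|u\|_{L^2}^2$ on its right-hand side, so it upgrades regularity but never yields $L^2$ control and cannot absorb a zeroth-order term. A Poincar\'e inequality along the $\sigma_j$ does give some $L^2$ control on a bounded domain, so ``$\|X_ju\|$ does not control $\|u\|$'' is too strong. But its constant need not beat the remainder.

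Take $d=m=1$, $G=0$ (so $\lambda_0=0$), $\sigma(x)=\sqrt{1+x^2}$ and $\Omega=(-R,R)$. Every hypothesis of the appendix holds (even uniform ellipticity), $(\sigma\sigma')'\equiv1$, and (A.2) reduces to $\frac{1-\nu}{2}\int(1+x^2)|u'|^2\,dx\ge\frac14\int|u|^2\,dx$. With $x=\sinh t$ and $u=(\cosh t)^{-1/2}v$, the left integral becomes $\int\bigl(|\dot v|^2+\tfrac14(1+\cosh^{-2}t)|v|^2\bigr)\,dt$, while $\int|u|^2\,dx=\int|v|^2\,dt$. Hence the bottom of the Dirichlet spectrum of $-\frac{d}{dx}(1+x^2)\frac{d}{dx}$ on $(-R,R)$ tends to $\frac14$ as $R\to\infty$. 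Since (A.2) needs it to be at least $\frac{1}{2(1-\nu)}>\frac12$ (and fails outright for $\nu\ge1$), it fails for every $\nu>0$ once $R$ is large.

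Your fallback is therefore the right repair, and it can be made exact. With your $X_j=\sigma_j\cdot\nabla$ we have $\tfrac12\sum_jX_j^2=-\tfrac12\sum_jX_j^*X_j-\tfrac12\sum_j(\nabla\cdot\sigma_j)X_j$. Setting $G_{\mathrm{eff}}:=G-\tfrac12\sum_j(\nabla\cdot\sigma_j)\sigma_j$ then gives (A.2) with $\nu=1$ and $\lambda_0$ replaced by $\tfrac12\|(\nabla\cdot G_{\mathrm{eff}})^-\|_{L^\infty(\Omega)}$. The It\^{o}/Stratonovich remark is beside the point: the obstruction is $\nabla\cdot\sigma_j$, not $X_j\sigma_j$.
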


\begin{proof}
Decompose $\mathcal{K} = G\cdot\nabla + \mathcal{L}$ where $\mathcal{L} = \frac{1}{2}\sum_j (\sigma_j\cdot\nabla)^2$. Integration by parts gives:
\[
\langle G\cdot\nabla u, u\rangle = -\frac{1}{2}\int_\Omega (\nabla\cdot G)|u|^2\,dx
\geq -\lambda_0 \|u\|_{L^2}^2.
\]
For the diffusion term, integration by parts with $u|_{\partial\Omega}=0$ yields:
\[
\Re\langle \mathcal{L}u, u\rangle = -\frac{1}{2}\sum_{j=1}^m \|\sigma_j \cdot \nabla u\|_{L^2}^2.
\]
H\"{o}rmander's theorem implies the subelliptic estimate \cite{RothschildStein1976}:
\[
\|u\|_{H^\varepsilon}^2 \leq C\Bigl(\sum_{j=1}^m \|\sigma_j \cdot \nabla u\|_{L^2}^2 + \|u\|_{L^2}^2\Bigr),
\quad \varepsilon = \frac{1}{2r+1},
\]
where $r$ is the H\"{o}rmander depth (smallest $k$ such that $\operatorname{span}\bigcup_{j=0}^k \mathcal{V}_j = \mathbb{R}^d$). Combining these yields (A.2).
\end{proof}

\begin{corollary}[Well-posedness]
Under (A.1), the bilinear form $a(u,v) = \langle (\lambda - \mathcal{K})u, v\rangle$ is coercive on $\mathcal{H}_\lambda$, and the variational problem:
\[
\text{Find } k_{\mathrm{var}}(\cdot,y) \in \mathcal{H}_\lambda \text{ such that }
a(k_{\mathrm{var}}(\cdot,y), v) = v(y) \quad \forall v \in \mathcal{H}_\lambda
\]
has a unique solution when $\varepsilon > d/2$. For $\varepsilon \leq d/2$, point evaluations require reinterpretation via mollification (see Remark A.8).
\end{corollary}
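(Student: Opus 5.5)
This corollary will be derived from the coercivity estimate (A.2) and the Lax--Milgram theorem, with the subelliptic estimate supplying the link to point evaluation. The first step is to fix the energy space. Coercivity in (A.2) controls only $\|u\|_{L^2}^2+\sum_j\|\sigma_j\cdot\nabla u\|_{L^2}^2$, not the full graph norm. So I will take the closure $V$ of $C_c^\infty(\Omega)$ under the norm
\[
\|u\|_V^2:=\|u\|_{L^2}^2+\sum_{j=1}^m\|\sigma_j\cdot\nabla u\|_{L^2}^2,
\]
and treat $\mathcal{H}_\lambda\cap V$ as a dense subspace. By (A.1), (A.2) gives
\[
\Re\, a(u,u)\geq c\,\|u\|_V^2,\qquad c=\min\bigl(\Re(\lambda)-\lambda_0,\ \nu/2\bigr)>0 .
\]

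The second step is continuity of $a$ on $V$. For the diffusion part I will integrate by parts once:
\[
\langle(\sigma_j\cdot\nabla)^2u,v\rangle=-\langle\sigma_j\cdot\nabla u,\sigma_j\cdot\nabla v\rangle-\langle(\nabla\cdot\sigma_j)\,\sigma_j\cdot\nabla u,v\rangle .
\]
This is bounded by $C\|u\|_V\|v\|_V$, since $\sigma_j\in C^\infty(\overline\Omega)$.

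The drift term $\langle G\cdot\nabla u,v\rangle$ is the main obstacle, because $G$ need not lie in the span of the $\sigma_j$, so it is not controlled by the $V$-norm. I would try two options in turn. The first is to use the subelliptic estimate, giving $V\hookrightarrow H^\varepsilon$, and bound $\langle G\cdot\nabla u,v\rangle$ via $H^{\varepsilon}\times H^{1-\varepsilon}$ duality; but this fails unless $\varepsilon\geq 1/2$. The fallback is to work in the graph space itself, treating $(\lambda-\mathcal{K})u\in L^2$ as part of the norm, so continuity $|a(u,v)|\le\|u\|_{\mathcal{H}_\lambda}\|v\|_{L^2}$ is trivial. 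Coercivity on the graph norm would then come from combining (A.2) with the identity $\|u\|_{\mathcal{H}_\lambda}^2=\|u\|^2+\|(\lambda-\mathcal{K})u\|^2$, in the spirit of Remark~\ref{rem:graph_norm}.

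The third step handles the right-hand side: show that $v\mapsto v(y)$ is a bounded functional. By the subelliptic estimate $\|v\|_{H^\varepsilon}\le C\|v\|_V$, and by Sobolev embedding $H^\varepsilon\hookrightarrow C(\overline\Omega)$ whenever $\varepsilon>d/2$. Hence $|v(y)|\le C'\|v\|_V$, and the hypothesis $\varepsilon>d/2$ in the statement is used exactly here. Lax--Milgram (complex version, with $\Re a$ coercive) then gives existence and uniqueness of $k_{\mathrm{var}}(\cdot,y)$, with $\|k_{\mathrm{var}}(\cdot,y)\|\le C'/c$.

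For the case $\varepsilon\le d/2$, I would replace $\delta_y$ by a mollified evaluation $v\mapsto\int v\,\rho_\eta(\cdot-y)$, which is bounded on $L^2$, and apply the same argument. Since $\varepsilon\le 1/3<d/2$ for every $d\ge1$, the pointwise case is vacuous, so I would note this explicitly and state that the substantive content is the mollified version.
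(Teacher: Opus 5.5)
\textbf{Verdict: genuine gap.} Your energy space $V$ (closure of $C_c^\infty(\Omega)$ under $\|u\|_{L^2}^2+\sum_j\|\sigma_j\cdot\nabla u\|_{L^2}^2$) and the bound $|v(y)|\le C'\|v\|_V$ are the right ingredients. But Lax--Milgram also needs $a$ bounded on $V\times V$, and the proposal never secures that.

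You rightly note that $\langle G\cdot\nabla u,v\rangle$ is not controlled by $\|\cdot\|_V$. You then set aside the duality route and fall back on the graph space, where the coercivity you need is false. Indeed, $\Re a(u,u)\ge c\bigl(\|u\|_{L^2}^2+\|(\lambda-\mathcal{K})u\|_{L^2}^2\bigr)$ together with $\Re a(u,u)\le\|(\lambda-\mathcal{K})u\|_{L^2}\|u\|_{L^2}$ would give $\|(\lambda-\mathcal{K})u\|_{L^2}\le c^{-1}\|u\|_{L^2}$, i.e.\ a bounded generator.

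Worse, the equation has no solution in $\mathcal{H}_\lambda$ at all. For $k\in\mathcal{H}_\lambda$ the map $v\mapsto a(k,v)=\langle(\lambda-\mathcal{K})k,v\rangle$ is $L^2$-continuous, while $v\mapsto v(y)$ is not $L^2$-continuous on $C_c^\infty(\Omega)\subset\mathcal{H}_\lambda$. Moreover, $\mathcal{H}_\lambda$ carries no boundary condition, so $\lambda-\mathcal{K}$ is not injective on it.

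The combination you end with is therefore not a Lax--Milgram setting: coercivity in $V$, continuity on $\mathcal{H}_\lambda\times L^2$, and data bounded on $V$. The paper gives no proof of this corollary beyond asserting coercivity on $\mathcal{H}_\lambda$, which is exactly this false step. Your instinct that the solution must be sought in $V$, not $\mathcal{H}_\lambda$, is the correct reading of the statement.

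The gap closes differently in the two cases. First, your ``vacuous'' remark is wrong: $\varepsilon=(2r+1)^{-1}=1$ when $r=0$. Since $d/2\ge 1/2$, the hypothesis $\varepsilon>d/2$ holds exactly when $r=0$ and $d=1$, which is the one-dimensional uniformly elliptic case. There $\|\cdot\|_V$ is equivalent to the $H^1$ norm and $|\langle G\cdot\nabla u,v\rangle|\le\|G\|_\infty\|u\|_V\|v\|_{L^2}$. So $a$ is bounded on $V\times V$, consistent with your own observation that the duality route works for $\varepsilon\ge 1/2$. Lax--Milgram with $H^1(\Omega)\hookrightarrow C(\overline\Omega)$ then gives a unique $k_{\mathrm{var}}(\cdot,y)\in V$.

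In the mollified, genuinely hypoelliptic case ($\varepsilon\le 1/3$), ``the same argument'' is unavailable. There $a$ is unbounded on $V\times V$ whenever $G$ leaves the span of the $\sigma_j$, e.g.\ the transport term $p\cdot\nabla_q$ in Langevin dynamics. You would need a tool that tolerates this. One option is J.-L.\ Lions' representation theorem with the drift moved onto smooth test functions; it gives existence of a weak solution in $V$, but uniqueness then needs a separate argument. Another is to enlarge the trial space to something like $\{u\in V: G\cdot\nabla u\in V'\}$, as in variational treatments of kinetic Fokker--Planck equations.
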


\begin{remark}[Comparison with Uniformly Elliptic Case]
In the uniformly elliptic case (main paper, Theorem 3.13), $\varepsilon = 1$ and the coercivity condition (A.1) is identical. Hypoellipticity preserves the same spectral condition but reduces regularity from $H^1$ to $H^\varepsilon$.
\end{remark}

\begin{remark}[Critical Regularity Threshold]
When $\varepsilon \leq d/2$, the embedding $\mathcal{H}_\lambda \hookrightarrow C^0(\Omega)$ fails, and point evaluations $\delta_x$ are \emph{not continuous} on $\mathcal{H}_\lambda$. In this regime:
\begin{itemize}
    \item The variational kernel $k_{\mathrm{var}}(x,\cdot)$ cannot be defined as a pointwise Riesz representer
    \item One must use mollified test functions $\delta_x^\epsilon \in \mathcal{H}_\lambda^*$ with $\delta_x^\epsilon \to \delta_x$ in $\mathcal{D}'$
    \item Alternatively, work with weak formulations using duality pairings $\langle u, \phi \rangle$ for $\phi \in C_c^\infty(\Omega)$
\end{itemize}
For example, underdamped Langevin dynamics in $d \geq 2$ dimensions has $\varepsilon = 1/3 \leq d/2$, requiring such reinterpretation.
\end{remark}

\subsection{Kernel Equivalence Theorem}

\begin{theorem}[Kernel Equivalence under H\"{o}rmander Condition]
Under Assumptions (H), boundary regularity, and $\Re(\lambda) > \lambda_0$, the three kernel constructions satisfy:
\[
k_{\mathrm{var}}(x,y) = k_{\mathrm{FK}}(x,y) = k_{\mathrm{res}}(x,y),
\]
where $k_{\mathrm{res}}(x,y) = \mathbb{E}_x\bigl[\int_0^{\tau_\Omega} e^{-\lambda t}\delta(y - X_t)\,dt\bigr]$ is the stochastic resolvent kernel.
\end{theorem}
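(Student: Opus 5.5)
The plan mirrors the structure of Theorem~\ref{thm:kernel_equivalence}, replacing uniform ellipticity by the subelliptic machinery of this appendix. The argument runs through the three equalities in turn.

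\textbf{Step 1: $k_{\mathrm{FK}} = k_{\mathrm{res}}$.} Here I would use the killed process on $\Omega$: take $p_t(x,y)$ to be the transition density of $X$ stopped and killed at $\tau_\Omega$, whose existence and smoothness are given by the Transition Density Regularity theorem. For $\varphi \in C_c^\infty(\Omega)$, the identity
\[
\E_x\Bigl[\int_0^{\tau_\Omega} e^{-\lambda t}\varphi(X_t)\,dt\Bigr] = \int_0^\infty e^{-\lambda t}\int_\Omega p_t(x,y)\varphi(y)\,dy\,dt
\]
follows from Fubini. Fubini is justified because $|e^{-\lambda t}| = e^{-\Re(\lambda)t}$ with $\Re(\lambda) > \lambda_0 \geq 0$, and because $\int_\Omega p_t(x,y)\,dy \leq 1$. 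This is exactly the computation of Step~1 of Theorem~\ref{thm:kernel_equivalence}, so the two kernels agree as distributions in $y$. Since $k_{\mathrm{FK}}(x,\cdot)$ is smooth off the diagonal, they also agree pointwise for $x \neq y$.

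\textbf{Step 2: both kernels solve the same boundary problem.} By the preceding Lemma, $(\lambda - \mathcal{K})k_{\mathrm{FK}}(\cdot,y) = \delta_y$ in $\mathcal{D}'(\Omega)$. I would additionally show that $k_{\mathrm{FK}}(\cdot,y)$ vanishes at $\partial\Omega$: when $X_0 \to x_0 \in \partial\Omega$, the killed process exits immediately, and this is where the non-characteristic boundary assumption (or periodicity) enters, since it makes every boundary point regular for exit. Next, I would test the variational identity $a(k_{\mathrm{var}}(\cdot,y), v) = v(y)$ against $v \in C_c^\infty(\Omega)$. This shows that $k_{\mathrm{var}}(\cdot,y)$ also satisfies $(\lambda - \mathcal{K})k = \delta_y$ distributionally, with zero trace in the weak sense of $\mathcal{H}_\lambda$. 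When $\varepsilon \leq d/2$, the identity is read through mollified functionals $\delta_y^\epsilon$, and I would pass to the limit $\epsilon \to 0$ at the end.

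\textbf{Step 3: uniqueness.} The difference $w = k_{\mathrm{var}}(\cdot,y) - k_{\mathrm{FK}}(\cdot,y)$ satisfies $(\lambda - \mathcal{K})w = 0$ in $\Omega$ with zero boundary data. Hörmander's theorem (hypoellipticity) makes $w$ smooth in the interior. The coercivity estimate (A.2) then gives
\[
(\Re\lambda - \lambda_0)\|w\|_{L^2}^2 \leq \Re\langle(\lambda - \mathcal{K})w, w\rangle = 0,
\]
so $w = 0$, provided the pairing can be justified. An alternative route is probabilistic: apply Itô's formula to $e^{-\lambda t}w(X_t)$ up to $\tau_\Omega \wedge t$, exactly as in the proof of Theorem~\ref{thm:feynman_kac}, to get $w(x) = \E_x[e^{-\lambda\tau_\Omega}w(X_{\tau_\Omega})] = 0$.

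The main obstacle is Step~3: applying (A.2), which is proved only for $C_c^\infty$ functions, to $w$, which is merely in $\mathcal{H}_\lambda$ with a weak boundary condition. I would first try a density argument, showing that $C_c^\infty(\Omega)$ is dense in the closed subspace of $\mathcal{H}_\lambda$ with zero trace. That density relies on the non-characteristic boundary (Oleinik--Radkevi\v{c}) together with Friedrichs mollification adapted to the vector fields $\sigma_j$. If density fails, I would fall back on the Itô argument, using interior smoothness of $w$ and continuity of $w$ up to regular boundary points. With $w = 0$, Steps~1--3 together give $k_{\mathrm{var}} = k_{\mathrm{FK}} = k_{\mathrm{res}}$.
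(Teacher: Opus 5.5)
\textbf{There is a gap in Step 3: your main plan for extending the coercivity estimate (A.2) to $w$ relies on a density claim that is false.}

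Your route is the paper's route. You get $k_{\mathrm{FK}}=k_{\mathrm{res}}$ by Fubini, show both kernels solve $(\lambda-\mathcal{K})k=\delta_y$ with Dirichlet data, and conclude by uniqueness from (A.2). The paper phrases the last step as ``Fredholm of index zero on $\mathcal{H}_\lambda$ and injective, hence bijective,'' but only injectivity is used. Applying it to the difference $w$ is the right move, because $\delta_y\notin L^2$ keeps each kernel individually out of $\mathcal{H}_\lambda$.

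The paper goes from (A.2), proved only on $C_c^\infty(\Omega)$, to injectivity without comment, so you have located the crux correctly. But $C_c^\infty(\Omega)$ is \emph{not} graph-norm dense in the zero-trace subspace of $\mathcal{H}_\lambda$, even in the uniformly elliptic case. Take $G=0$ and $\sigma=\sqrt2\,I$, so that $\mathcal{K}=\Delta$ on the unit disk. On $C_c^\infty$ the graph norm is equivalent to the $H^2$ norm, since $\|D^2u\|_{L^2}=\|\Delta u\|_{L^2}$ there. Hence the graph-norm closure of $C_c^\infty$ is $H^2_0$, which also forces $\partial_n u=0$. For example, $1-|x|^2\in H^2\cap H^1_0$ has $\partial_n u=-2$ and is not a limit. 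A Green's-formula argument shows the same obstruction at every non-characteristic boundary point. A priori you know $w|_{\partial\Omega}=0$ but not that its normal derivative vanishes, so density of $C_c^\infty$ cannot transfer (A.2) to $w$.

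The missing idea is that compact support is not needed. The integrations by parts behind (A.2) produce only boundary terms proportional to $\int_{\partial\Omega}(G\cdot n)|u|^2\,dS$ and $\int_{\partial\Omega}(\sigma_j\cdot n)(\sigma_j\cdot\nabla u)\bar u\,dS$. Both vanish once $u|_{\partial\Omega}=0$, so (A.2) holds for functions smooth up to the boundary with zero boundary values. What you must supply is regularity of $w$ up to $\partial\Omega$ for the homogeneous Dirichlet problem, or equivalently density of that smooth class in the Dirichlet subspace. This is exactly where the non-characteristic hypothesis has to be used.

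The same ingredient gives the continuity and boundedness of $w$ up to $\partial\Omega$ that your It\^o fallback presupposes. The appendix's boundary assumption explicitly does not guarantee that continuity, so the fallback is not an independent route around this step.

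A related point: the zero trace of $k_{\mathrm{var}}(\cdot,y)$ does not follow from testing $a(k_{\mathrm{var}}(\cdot,y),v)=v(y)$ against $C_c^\infty$. The form $\langle(\lambda-\mathcal{K})u,v\rangle$ involves no integration by parts, so it yields no natural boundary condition. The Dirichlet condition has to be built into the space on which $k_{\mathrm{var}}$ is posed; the paper simply asserts it.
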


\begin{proof}
\textbf{Step 1:} $k_{\mathrm{FK}} = k_{\mathrm{res}}$ follows identically to the uniformly elliptic case (main paper, Theorem 3.13, Step 1) using Fubini's theorem and the definition of $p_t(x,y)$.

\textbf{Step 2:} Both $k_{\mathrm{var}}(\cdot,y)$ and $k_{\mathrm{res}}(\cdot,y)$ solve the same distributional equation:
\[
(\lambda - \mathcal{K})k(\cdot,y) = \delta_y \quad \text{in } \mathcal{D}'(\Omega),
\]
with boundary condition $k(x,y) = 0$ for $x \in \partial\Omega$ (Dirichlet case). Since $(\lambda - \mathcal{K}): \mathcal{H}_\lambda \to L^2(\Omega)$ is a Fredholm operator of index zero (by the subelliptic estimate and compact embedding $H^\varepsilon \hookrightarrow L^2$ for $\varepsilon > 0$) and is injective by Proposition A.1, it is bijective. Hence the solution to this equation is unique in $\mathcal{H}_\lambda$, implying $k_{\mathrm{var}} = k_{\mathrm{res}}$.

\textbf{Step 3:} Symmetry considerations. Unlike the uniformly elliptic reversible case, $k_{\mathrm{FK}}(x,y) \neq k_{\mathrm{FK}}(y,x)$ in general. The equivalence holds for the \emph{forward} kernel $k_{\mathrm{FK}}(x,y) = \int_0^\infty e^{-\lambda t}p_t(x,y)\,dt$. For the adjoint problem $(\lambda - \mathcal{K}^*)v = \delta_x$, the kernel is $k_{\mathrm{FK}}^*(x,y) = \int_0^\infty e^{-\lambda^* t}p_t(y,x)\,dt$.
\end{proof}

\begin{remark}[Practical Implementation for Degenerate Diffusion]
When $a(x)$ is degenerate, compute the diffusion matrix entry via vector fields rather than the trace formula:
\[
D_{ij} = \frac{1}{2}\sum_{k=1}^m \bigl(\sigma_k(x_i)\cdot\nabla_x\bigr)^2 k(x_i,x_j),
\]
which remains well-defined under H\"{o}rmander's condition even when $a(x_i)$ is singular. This avoids numerical instability from inverting degenerate matrices. For Gaussian RBF kernels:
\[
(\sigma_k \cdot \nabla_x)^2 k(x_i,x_j) = \left[\frac{(\sigma_k \cdot (x_i-x_j))^2}{\ell^4} - \frac{\|\sigma_k\|^2}{\ell^2}\right] k(x_i,x_j).
\]
\end{remark}

\section{Subelliptic Regularity and Embeddings}

\begin{theorem}[Global Subelliptic Estimate \cite{RothschildStein1976}]
Let $\chi \in C_c^\infty(\Omega)$. Under H\"{o}rmander's condition with depth $r$, there exists $\varepsilon = (2r+1)^{-1} > 0$ and $C > 0$ such that:
\[
\|\chi u\|_{H^\varepsilon(\Omega)} \leq C\Bigl(\sum_{j=1}^m \|\sigma_j \cdot \nabla u\|_{L^2(\Omega)} + \|u\|_{L^2(\Omega)}\Bigr),
\quad \forall u \in C_c^\infty(\Omega).
\]
\end{theorem}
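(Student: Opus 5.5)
The plan is the classical Kohn-type commutator argument for sums of squares, followed by a compactness/partition-of-unity step to pass from local estimates near each point to the cutoff $\chi$. The target exponent is $\varepsilon=(2r+1)^{-1}$. Since $H^{\varepsilon'}\hookrightarrow H^{\varepsilon}$ for $\varepsilon'\ge\varepsilon$, it suffices to prove the estimate with any exponent $\varepsilon'\ge (2r+1)^{-1}$. Rothschild--Stein give the sharp exponent $1/r$, which is larger.

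\textbf{Caveat about the hypothesis.} As written, the right-hand side contains only $\|\sigma_j\cdot\nabla u\|$ and no drift term. The estimate therefore requires that the $\sigma_j$ \emph{alone} generate $\R^d$ with brackets of length at most $r$, that is, $G$ is not used in the brackets. For the Kolmogorov/Langevin type with $\sigma=\partial_v$ and $G=v\partial_x$, the quantity $\|\partial_v u\|$ clearly cannot control $x$-regularity. I would therefore either prove the statement under this sum-of-squares reading of (H), or prove the Hörmander version in which $\|(\lambda-\Koop)u\|_{L^2}$ is added to the right-hand side. The argument below is written for the first reading. In the second reading one adds the usual extra step that handles $G$ through $\Re\langle(\lambda-\Koop)u,u\rangle$ and a commutator with $G$ costing a factor $2$ in the exponent.

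\textbf{Step 1 (admissible class).} Let $\Lambda^s$ denote the Bessel potential $(1-\Delta)^{s/2}$. Fix a compact $M\supset\operatorname{supp}\chi$ inside $\Omega$. Call a vector field $Y$ \emph{$\delta$-admissible} if
\[
\|Yu\|_{H^{\delta-1}}\le C\Bigl(\sum_j\|\sigma_j\cdot\nabla u\|_{L^2}+\|u\|_{L^2}\Bigr)\quad\text{for all } u\in C_c^\infty \text{ supported in } M.
\]
Each $X_j:=\sigma_j\cdot\nabla$ is $1$-admissible trivially. Smooth multiples and sums of admissible fields stay admissible, with the minimum exponent.

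\textbf{Step 2 (commutator lemma).} If $Y$ is $\delta$-admissible, then $[X_j,Y]$ is $\delta/2$-admissible. To prove this, write
\[
\|[X_j,Y]u\|_{H^{\delta/2-1}}^2=\langle X_jYu-YX_ju,\;\Lambda^{\delta-2}[X_j,Y]u\rangle .
\]
Then move $X_j$ and $Y$ to the other side using $X_j^*=-X_j+c_j$ and $Y^*=-Y+c$. Next commute them past $\Lambda^{\delta-2}$; the commutators are pseudodifferential operators of order $\delta-2$, hence harmless. Finally apply Cauchy--Schwarz: one factor is $\|X_ju\|$ and the other is $\|Yu\|_{H^{\delta-1}}$ or $\|X_j u\|$. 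This is the routine but heavy part, done exactly as in Kohn's proof. Iterating gives that every bracket of length $k$ is $2^{1-k}$-admissible.

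\textbf{Step 3 (spanning and assembly).} By (H) with depth $r$ and compactness of $M$, one can write locally $\partial_k=\sum_I c_{k,I}X_I$, where the $X_I$ are brackets of length at most $r$ and the $c_{k,I}$ are smooth. Hence $\|\partial_k(\chi u)\|_{H^{\delta-1}}$ is controlled, and so is $\|\chi u\|_{H^{\delta}}$ with $\delta=2^{1-r}$. Here I commute $\chi$ in and absorb the lower-order terms into $\|u\|_{L^2}$.

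\textbf{Main obstacle.} Kohn's exponent $2^{1-r}$ dominates $(2r+1)^{-1}$ only for $r\le4$. For larger $r$, Step 2 must be replaced by the Rothschild--Stein lifting. One lifts $X_j$ to free vector fields $\tilde X_j$ on $\R^{d}\times\R^{N}$ that are approximated by left-invariant fields on a free nilpotent group of step $r$. The homogeneous estimate $\|\tilde u\|_{S^{1}}\lesssim\sum_j\|\tilde X_j\tilde u\|+\|\tilde u\|$ then follows from Folland's parametrix. One then uses the embedding $S^{1}\hookrightarrow H^{1/r}$ and projects back, which gives exponent $1/r\ge(2r+1)^{-1}$. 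I would cite this lifting and approximation theorem rather than reprove it, and verify only the final embedding and the projection step.
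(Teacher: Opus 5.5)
The paper gives no argument for this statement beyond the citation to Rothschild--Stein. Your caveat is the substantive point, and it is not a matter of interpretation: under the paper's own condition (H), where $G$ enters the brackets, the displayed inequality is false.

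The Langevin fields of Appendix~C give a counterexample. Take $\sigma_1=\partial_p$ and $G=p\,\partial_q$, which has depth $r=1$. Let $u_n(q,p)=\phi(q)\psi(p)\cos(nq)$, with $\chi\equiv 1$ on $\operatorname{supp}\phi\times\operatorname{supp}\psi$. Then $\|\partial_p u_n\|_{L^2}+\|u_n\|_{L^2}$ stays bounded while $\|\chi u_n\|_{H^{1/3}}$ grows like $n^{1/3}$.

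Rothschild--Stein support exactly your two readings:
\begin{itemize}
\item bracket generation by the $\sigma_j$ alone;
\item the estimate with $\|(\lambda-\mathcal{K})u\|_{L^2}$ added on the right. Here $G$ carries weight $2$, so depth-$r$ brackets have weighted length at most $2r+1$. This is evidently where the paper's exponent comes from; their gain in that setting is in fact $2/(2r+1)$.
\end{itemize}
Proving the first reading is therefore the right call. Be aware, though, that it excludes the Langevin case, which is exactly where the paper invokes the estimate (the coercivity proposition of Appendix~A and the Langevin corollary of Appendix~C). There only the second reading can hold. Your one-line sketch of that reading would again need Rothschild--Stein for general $r$, since Kohn-type exponents decay exponentially in $r$.

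On the route itself, the paper relies entirely on the citation. You instead give a self-contained Kohn commutator proof whenever its exponent beats the target, and fall back on the same citation otherwise. Your Step~2 bookkeeping is correct: you move $X_j$ and $Y$ across, then commute $Y$ past $\Lambda^{\delta-2}[X_j,Y]$ so that it lands on $u$ and produces $\|Yu\|_{H^{\delta-1}}$, and all remainders have order $\le 0$.

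Two corrections:
\begin{enumerate}
\item \textbf{Depth convention.} The paper's depth is off by one from yours. Since $\mathcal{V}_0=\{\sigma_j\}$, depth $r$ means brackets of length up to $r+1$ (Langevin has $r=1$). Kohn then gives $2^{-r}$, which suffices only for $r\le 2$, and Rothschild--Stein give $1/(r+1)$. Your conclusion is unaffected.
\item \textbf{Lifting step.} The estimate $\|\tilde u\|_{S^1}\lesssim\sum_j\|\tilde X_j\tilde u\|+\|\tilde u\|$ is just the definition of the $S^1$ norm at $p=2$, not a consequence of Folland's parametrix. The deep ingredient is the local embedding of $S^1$ into $H^{1/s}$, with $s$ the bracket length needed. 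That is where the free-group approximation and the smoothing properties of operators of type $\lambda$ enter. You should cite this embedding together with the lifting theorem rather than plan to verify it. Only the projection back to $\R^d$ is routine: test with $u\otimes\theta$ and compare Fourier weights.
\end{enumerate}
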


\begin{corollary}[Sobolev Embedding]
For $\varepsilon > d/2$ (satisfied when $r < (2/d - 1)/2$), we have the continuous embedding:
\[
\mathcal{H}_\lambda \hookrightarrow H^\varepsilon(\Omega) \hookrightarrow C^{0,\alpha}(\overline{\Omega}), \quad \alpha = \varepsilon - d/2 > 0.
\]
Consequently, point evaluations $\delta_x$ are continuous on $\mathcal{H}_\lambda$, ensuring the RKHS structure.
\end{corollary}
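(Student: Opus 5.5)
The plan is to chain three inequalities: the coercivity estimate (A.2) of the Proposition on coercivity under H\"{o}rmander's condition, the global subelliptic estimate of Theorem B.1, and the fractional Sobolev embedding $H^s \hookrightarrow C^{0,s-d/2}$ for $s > d/2$.

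\textbf{Step 1: control the horizontal gradients by the graph norm.} For $u \in C_c^\infty(\Omega)$, (A.2) together with Cauchy--Schwarz gives
\[
\frac{\nu}{2}\sum_{j=1}^m \|\sigma_j\cdot\nabla u\|_{L^2}^2 \le \Re\langle(\lambda-\mathcal{K})u,u\rangle \le \|(\lambda-\mathcal{K})u\|_{L^2}\|u\|_{L^2} \le \|u\|_{\mathcal{H}_\lambda}^2 .
\]
The first term on the right of (A.2) is nonnegative by (A.1), so it can be dropped. Hence $\sum_j\|\sigma_j\cdot\nabla u\|_{L^2} \le C\|u\|_{\mathcal{H}_\lambda}$.

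\textbf{Step 2: upgrade to an $H^\varepsilon$ bound.} Insert this into the subelliptic estimate of Theorem B.1 to obtain $\|\chi u\|_{H^\varepsilon} \le C\|u\|_{\mathcal{H}_\lambda}$ for every cutoff $\chi \in C_c^\infty(\Omega)$. To drop the cutoff, use the boundary regularity assumption. On a torus no cutoff is needed, since a finite partition of unity covers it. In the non-characteristic case, I would invoke the boundary version of the Rothschild--Stein/Oleinik--Radkevi\v{c} estimate near $\partial\Omega$, which is available because some $\sigma_j$ is transversal there. Then I extend from $C_c^\infty(\Omega)$ (or $C^\infty$ with zero trace) to all of $\mathcal{H}_\lambda$ by density and continuity of both sides, which gives the first embedding $\mathcal{H}_\lambda \hookrightarrow H^\varepsilon(\Omega)$.

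\textbf{Step 3: Sobolev--Morrey and point evaluation.} Apply the fractional Sobolev--Morrey embedding on the bounded $C^\infty$ domain $\Omega$, using an extension operator for $H^\varepsilon(\Omega)$. For $\varepsilon > d/2$ this gives $H^\varepsilon(\Omega)\hookrightarrow C^{0,\alpha}(\overline\Omega)$ with $\alpha = \varepsilon - d/2$; if $\alpha \ge 1$ one instead takes any $\alpha<1$. Then $|u(x)| \le \|u\|_{C^0} \le C\|u\|_{\mathcal{H}_\lambda}$, so each $\delta_x$ is a bounded functional. Riesz representation then yields the reproducing kernel, exactly as in Definition~\ref{def:var_kernel}.

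\textbf{Main obstacle.} The delicate step is Step 2, for two reasons. First, the density of smooth compactly supported (or zero-trace) functions in the graph space $\mathcal{H}_\lambda$ is not automatic for a degenerate operator. I would first try Friedrichs mollification along the vector fields, using the Friedrichs commutator lemma, to show the graph norm is preserved in the limit. Second, the estimate must be made global, not just interior. I would also remark that since $\varepsilon = (2r+1)^{-1} \le 1$, the hypothesis $\varepsilon > d/2$ can only be met when $d = 1$ and $r = 0$, i.e.\ the elliptic case. So the corollary is essentially a consistency check with the uniformly elliptic setting, and the nontrivial regime is handled by the mollified evaluations of Remark A.8.
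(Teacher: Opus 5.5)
Your chain (A.2), then Theorem B.1, then fractional Morrey is presumably what the paper has in mind. The paper gives no proof and simply places the corollary right after Theorem B.1. On the torus your argument goes through. In case (ii), however, it breaks where you pass from $C_c^\infty(\Omega)$ (or zero-trace smooth functions) to all of $\mathcal{H}_\lambda$ ``by density''.

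As defined, $\mathcal{H}_\lambda=\{u\in L^2(\Omega):(\lambda-\mathcal{K})u\in L^2(\Omega)\}$ imposes no boundary condition. It is the \emph{maximal} domain of $\lambda-\mathcal{K}$, whereas the graph-norm closure of $C_c^\infty(\Omega)$ is the \emph{minimal} domain, whose elements have vanishing boundary data. This is not a degeneracy issue that Friedrichs mollification could repair. Mollifiers give approximants smooth up to $\partial\Omega$, never compactly supported ones, because the obstruction is boundary values rather than smoothness. The density claim already fails for uniformly elliptic operators.

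More to the point, the inequality driving your Step 1 is false on $\mathcal{H}_\lambda$. (A.2) rests on $u|_{\partial\Omega}=0$. Take any nonzero $u\in\mathcal{H}_\lambda$ with $(\lambda-\mathcal{K})u=0$; on an interval these form a two-dimensional space of functions smooth on $\overline{\Omega}$. For such $u$ the left side of (A.2) vanishes, while its right side is at least $(\Re(\lambda)-\lambda_0)\|u\|_{L^2}^2>0$. So no approximation argument can extract $\sum_j\|\sigma_j\cdot\nabla u\|_{L^2}\le C\|u\|_{\mathcal{H}_\lambda}$ on $\mathcal{H}_\lambda$ from coercivity.

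What your argument actually proves is the embedding for the Dirichlet subspace $\overline{C_c^\infty(\Omega)}^{\,\mathcal{H}_\lambda}$. For that subspace the boundary subelliptic estimate you worried about is not needed. If (H) holds on a neighbourhood of $\overline{\Omega}$, H\"{o}rmander's a priori estimate holds with one constant for all $u\in C_c^\infty(\Omega)$ and passes to the closure.

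Your closing count is correct, and it is the way to prove the statement as written. Since $\varepsilon=(2r+1)^{-1}\le 1$, the condition $\varepsilon>d/2$ forces $d=1$ and $r=0$. In case (ii), $\Omega$ is then an interval, and $\sum_j\sigma_j^2>0$ on $\overline{\Omega}$: in the interior by depth-zero bracket generation, at the endpoints by the non-characteristic assumption. Hence $(\lambda-\mathcal{K})u=f\in L^2$ is a regular second-order linear ODE with coefficients smooth on $\overline{\Omega}$. Every distributional solution is a variation-of-parameters particular solution, which lies in $H^2(\Omega)$ because $f\in L^2$, plus a combination of two homogeneous solutions smooth on $\overline{\Omega}$.

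Therefore $\mathcal{H}_\lambda=H^2(\Omega)$ as sets. The norms are equivalent by the bounded inverse theorem, since $\mathcal{H}_\lambda$ is complete (the maximal operator is closed). Then $H^2(\Omega)\hookrightarrow H^1(\Omega)\hookrightarrow C^{0,1/2}(\overline{\Omega})$ gives the claimed chain with $\varepsilon=1$, $\alpha=1/2$, and continuity of every $\delta_x$. The missing ingredient is regularity up to the boundary on the maximal domain, not coercivity.

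This route also sidesteps Theorem B.1. As stated, with only $\sigma_j\cdot\nabla u$ on the right, it cannot hold when bracket generation needs $G$, e.g.\ for Langevin dynamics. For $r=0$ it reduces to the elementary elliptic $H^1$ bound.
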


\begin{remark}[Regularity Limitations]
For high-dimensional systems with large H\"{o}rmander depth $r$, we may have $\varepsilon \leq d/2$, in which case $\mathcal{H}_\lambda \not\subset C^0(\Omega)$. In such cases, the definition of the variational kernel $k_{\mathrm{var}}$ as a pointwise Riesz representer requires careful reinterpretation via mollification or weak formulations (Remark A.8).
\end{remark}

\section{Underdamped Langevin Dynamics}

Consider the Langevin SDE on $\mathbb{R}^{2d}$ with coordinates $x = (q,p)$:
\[
\begin{aligned}
dq_t &= p_t\,dt, \\
dp_t &= -\nabla V(q_t)\,dt - \gamma p_t\,dt + \sqrt{2\gamma\beta^{-1}}\,dW_t,
\end{aligned}
\]
where $V \in C^\infty(\mathbb{R}^d)$, $\gamma > 0$, $\beta > 0$. The generator is:
\[
\mathcal{K} = p\cdot\nabla_q - \nabla V(q)\cdot\nabla_p - \gamma p\cdot\nabla_p + \gamma\beta^{-1}\Delta_p.
\]

\begin{theorem}[H\"{o}rmander Verification]
Define vector fields $X_0 = p\cdot\nabla_q - \nabla V(q)\cdot\nabla_p - \gamma p\cdot\nabla_p$ and $X_j = \sqrt{2\gamma\beta^{-1}}\,\partial_{p_j}$ for $j=1,\dots,d$. Then:
\[
[X_0, X_j] = \sqrt{2\gamma\beta^{-1}}\,(-\partial_{q_j} + \gamma\partial_{p_j}),
\]
and the Lie algebra generated by $\{X_0, X_1, \dots, X_d\}$ spans $\mathbb{R}^{2d}$ at every point. Hence H\"{o}rmander's condition holds with depth $r=1$ and $\varepsilon = 1/3$.
\end{theorem}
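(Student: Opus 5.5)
The plan is a direct computation: identify the drift and diffusion vector fields of the Langevin SDE in the form used in the Appendix A setting, compute the single first-order bracket, and check the span condition pointwise. First I will check that the stated generator matches the form $\mathcal{K} = X_0 + \frac12\sum_j X_j^2$ used in the definition of Hörmander's condition. With $X_j = \sqrt{2\gamma\beta^{-1}}\,\partial_{p_j}$ we get $\frac12 X_j^2 = \gamma\beta^{-1}\partial_{p_j}^2$, so $\frac12\sum_j X_j^2 = \gamma\beta^{-1}\Delta_p$. Because the diffusion coefficients are constant, there is no Itô--Stratonovich correction, and the drift vector field is exactly $X_0 = G$. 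In the notation of the appendix, $\mathcal{V}_0 = \{X_1,\dots,X_d\}$ and $G = X_0$.

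Next I will compute $[X_0, X_j]$ as the commutator of first-order differential operators acting on a test function $f$. Write $c = \sqrt{2\gamma\beta^{-1}}$. In $X_0 X_j f - X_j X_0 f$, all second-order terms cancel, and only the derivative of the coefficients of $X_0$ with respect to $p_j$ survives. Those coefficients are $p$ in front of $\nabla_q$, $-\nabla V(q)$ in front of $\nabla_p$ (which does not depend on $p$), and $-\gamma p$ in front of $\nabla_p$. Differentiating them in $p_j$ gives $X_j X_0 f = c(\partial_{q_j} f - \gamma\partial_{p_j} f) + X_0 X_j f$. Hence
\[
[X_0,X_j] = c\,(-\partial_{q_j} + \gamma\partial_{p_j}),
\]
as claimed. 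I will remark that the paper's bracket $[V,W] = (V\cdot\nabla)W - (W\cdot\nabla)V$ may differ from the operator commutator by a sign. Either the set $\mathcal{V}_1$ contains $[X_j, X_0] = -[X_0,X_j]$, or the sign is simply absorbed; in both cases the linear span is unaffected.

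Finally, for the span condition, fix any point $(q,p)$. The fields $X_1,\dots,X_d$ span the $p$-directions $\{\partial_{p_j}\}$. The combinations $[X_0,X_j] - \gamma X_j = -c\,\partial_{q_j}$ span the $q$-directions. Together these $2d$ vectors span $\mathbb{R}^{2d}$ at every point, uniformly in $(q,p)$ and independently of $V$.

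The key structural point is that $\mathcal{V}_0$ alone spans only the $p$-directions, since $\operatorname{rank} a = d < 2d$. So the set $\mathcal{V}_0$ fails the span condition, while $\mathcal{V}_0 \cup \mathcal{V}_1$ succeeds. The minimal depth is therefore $r = 1$, and the subelliptic exponent from Theorem B.1 is $\varepsilon = (2r+1)^{-1} = 1/3$. I do not expect a genuine obstacle here. The only place requiring care is the sign and ordering convention of the Lie bracket relative to the recursive definition of $\mathcal{V}_{k+1}$, and I will handle it by the span argument above.
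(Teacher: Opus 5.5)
Your proposal is correct and follows essentially the same route as the paper (a direct computation of the first-level bracket followed by a pointwise span check). It is actually tighter than the paper's proof in three ways. First, the explicit combination $[X_0,X_j]-\gamma X_j=-c\,\partial_{q_j}$, together with the observation that $\mathcal{V}_0$ alone spans only the $p$-directions, justifies depth exactly $r=1$, whereas the paper only says ``iterating brackets.'' Second, your commutator computation avoids the sign slip in the paper's intermediate line: one has $(\partial_{p_j}\cdot\nabla)X_0=\partial_{q_j}-\gamma\partial_{p_j}$, not its negative. Third, your check that constant diffusion coefficients produce no It\^{o}--Stratonovich correction, so that the drift field is exactly $X_0=G$, is a step the paper omits.

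One small correction: the convention $[V,W]=(V\cdot\nabla)W-(W\cdot\nabla)V$ coincides exactly with the operator commutator $VW-WV$, so there is no sign discrepancy there. The only sign issue is that $\mathcal{V}_1$ contains $[X_j,X_0]=-[X_0,X_j]$, and, as you note, this does not affect the span.
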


\begin{proof}
Compute the Lie bracket explicitly with the scaling factor:
\[
\begin{aligned}
[X_0, X_j] 
&= (X_0\cdot\nabla)X_j - (X_j\cdot\nabla)X_0 \\
&= \sqrt{2\gamma\beta^{-1}}\bigl[(X_0\cdot\nabla)\partial_{p_j} - (\partial_{p_j}\cdot\nabla)X_0\bigr] \\
&= \sqrt{2\gamma\beta^{-1}}\bigl[0 - (-\partial_{q_j} + \gamma\partial_{p_j})\bigr] \\
&= \sqrt{2\gamma\beta^{-1}}\,(-\partial_{q_j} + \gamma\partial_{p_j}),
\end{aligned}
\]
which has a non-vanishing component in the $q$-direction. Iterating brackets generates all coordinate directions.
\end{proof}

\begin{corollary}
The kernel framework applies to underdamped Langevin dynamics with subelliptic regularity $\varepsilon = 1/3$. For $d = 1$, we have $\varepsilon > d/2$ and continuous sample paths; for $d \geq 2$, the critical threshold $\varepsilon \leq d/2$ requires mollified formulations for point evaluations.
\end{corollary}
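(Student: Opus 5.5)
The plan is to obtain the corollary by chaining results already stated in the appendices, with no new analysis except a dimension count. First, the Hörmander Verification theorem for the Langevin system gives condition (H) with bracket depth $r=1$: the fields $X_1,\dots,X_d$ span the $p$-directions, and the brackets $[X_0,X_j]$ contribute the $-\partial_{q_j}$ components. By the Global Subelliptic Estimate of Appendix B this gives $\varepsilon=(2r+1)^{-1}=1/3$. Second, I would check the standing hypotheses of Appendix A in this setting: smooth coefficients on a bounded smooth $\Omega\subset\R^{2d}$, and either periodic conditions in $q$ or a non-characteristic boundary. Under these, the Coercivity Proposition (A.2) and the Well-posedness Corollary apply whenever $\Re(\lambda)>\lambda_0$. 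Here $\nabla\cdot G=-\gamma d$, so $\lambda_0=\gamma d/2$ is explicit. The Kernel Equivalence Theorem under Hörmander's condition then gives $k_{\mathrm{var}}=k_{\mathrm{FK}}=k_{\mathrm{res}}$ in the distributional or weak sense. This is what ``the kernel framework applies'' means.

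Third, the dichotomy in $d$ should come from comparing $\varepsilon$ with the embedding threshold in the Sobolev Embedding Corollary. This is the step I expect to be the main obstacle. The state space is $\R^{2d}$, so the relevant threshold for $H^\varepsilon\hookrightarrow C^0$ is $\varepsilon>(2d)/2=d$. Even using the paper's convention $\varepsilon>d/2$, the value $\varepsilon=1/3$ fails already at $d=1$, because $1/3<1/2$. The $d\ge2$ branch is therefore immediate: $\varepsilon\le d/2$, so by the Critical Regularity Threshold remark and the Regularity Limitations remark, point evaluations must be mollified. The $d=1$ branch cannot follow from the bare $H^{1/3}$ estimate.

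To rescue the $d=1$ claim, I would first try to use the full graph norm rather than only the Dirichlet-form bound. For $u\in\mathcal{H}_\lambda$ we have $(\lambda-\Koop)u\in L^2$. Kinetic hypoelliptic regularity (Rothschild--Stein for the full operator, or Bouchut-type averaging) then gives roughly $H^2$ regularity in $p$ and $H^{2/3}$ in $q$. I would then check an anisotropic Sobolev embedding into $C^0(\R^2)$, using the criterion $\sum_i 1/(2s_i)<1$ with $s_p=2$ and $s_q=2/3$. This gives $1/4+3/4=1$, which is borderline.

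If that borderline case cannot be closed, for instance by an extra local gain or a Besov refinement, my fallback is to weaken the $d=1$ statement. It would then assert the framework only in the mollified or weak sense, as in the $d\ge2$ case, and record that the printed inequality $\varepsilon>d/2$ for $d=1$ is not valid as stated.
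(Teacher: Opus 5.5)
Your chain is the paper's own. The paper gives no separate argument for this corollary; it is read off from the H\"ormander Verification theorem ($r=1$, so $\varepsilon=1/3$) and the threshold $\varepsilon>d/2$ in the Sobolev Embedding Corollary and the Critical Regularity remark. Your $\lambda_0=\gamma d/2$ is correct. You are also right that the $d=1$ clause is false as printed. The inequality $1/3<1/2$ shows it is an arithmetic slip: the paper's own embedding corollary requires $r<(2/d-1)/2=1/2$ at $d=1$, which excludes $r=1$. On $\R^{2d}$ the isotropic threshold would be $\varepsilon>d$ anyway.

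Two ingredients you borrow are also defective. First, the Appendix B estimate controls $\|\chi u\|_{H^\varepsilon}$ by $\sum_j\|\sigma_j\cdot\nabla u\|+\|u\|$ alone, and this is false here. The $\sigma_j\propto\partial_{p_j}$ commute and give no $q$-regularity; take $u=\psi(q)\eta(p)$ with $\psi$ highly oscillatory. So $\varepsilon=1/3$ is only available from an estimate that also involves the drift $X_0u$. Second, the non-characteristic alternative is void for a bounded smooth $\Omega\subset\R^{2d}$: at a boundary point maximizing $q_1$ the normal is $e_{q_1}$, which is orthogonal to every $\sigma_j$. You therefore need the $q$-periodic setting, e.g.\ $\mathbb{T}^d\times B_R$ with periodic $V$.

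Your rescue through the full graph norm cannot close the borderline. The case is exactly critical at points with $p=0$, including the equilibrium $(q^*,0)$. Let $P=p\cdot\nabla_q+\gamma\beta^{-1}\Delta_p$ and $\phi_\delta(q,p)=\phi((q-q^*)/\delta^3,p/\delta)$ with $\phi\in C_c^\infty$, $\phi(0)=1$. Then $P\phi_\delta=\delta^{-2}(P\phi)_\delta$, so $\|P\phi_\delta\|_{L^2}=\delta^{2d-2}\|P\phi\|_{L^2}$ and $\|\phi_\delta\|_{L^2}=\delta^{2d}\|\phi\|_{L^2}$. The remaining terms of $\lambda-\Koop$ are $O(\delta^{2d-1})$ in $L^2$, while $\phi_\delta(q^*,0)=1$.

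For $d\ge2$ the graph norm therefore tends to $0$. This is the real proof of your $d\ge2$ branch: $\varepsilon\le d/2$ only shows that the $H^\varepsilon$ route fails, not that point evaluation is discontinuous.

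For $d=1$ the norm is scale-invariant, the analogue of $H^2(\R^4)\not\hookrightarrow L^\infty$ (homogeneous dimension $3+1$, two derivatives). Because $\int P\phi=0$, one gets $|\langle P\phi_{2^{-k}},P\phi_{2^{-j}}\rangle|\lesssim 2^{-3|k-j|}$. By Schur's test, $u_N=\sum_{k\le N}k^{-1}\phi_{2^{-k}}$ then has bounded graph norm, while $u_N(q^*,0)=\sum_{k\le N}k^{-1}\to\infty$. Hence point evaluation is unbounded on $\mathcal H_\lambda$ already for $d=1$, and no Besov or local refinement can rescue it. Away from $\{p=0\}$ the $d=1$ operator is locally a heat operator with $q$ as time, and point evaluation is continuous there, but that does not give a global RKHS.

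So adopt your fallback outright: the $d=1$ branch should read like the $d\ge2$ branch, with the kernel equivalence understood in the weak or mollified sense.
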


\begin{remark}[Physical Significance]
This result justifies kernel-based computation of Koopman eigenfunctions for molecular dynamics, where the position variables $q$ evolve deterministically but inherit smoothness from momentum diffusion through H\"{o}rmander's mechanism.
\end{remark}

\section{Numerical Analysis under Hypoellipticity}

\subsection{RKHS Approximation Rates}

Under subelliptic regularity $\mathcal{H}_\lambda \subset H^\varepsilon(\Omega)$ with $\varepsilon \in (0,1)$, the Mercer eigenvalues of admissible kernels whose RKHS norm is equivalent to the $H^\varepsilon$ norm decay as:
\[
\mu_j \lesssim j^{-2\varepsilon/d}, \quad j \to \infty.
\]
Consequently, the RKHS approximation error for $N$ collocation points with fill distance $h$ satisfies:
\[
\|u - u_N\|_{L^\infty(\Omega)} \lesssim h^\varepsilon \|u\|_{\mathcal{H}_\lambda}.
\]

\begin{remark}[Comparison with Elliptic Case]
For uniformly elliptic operators ($\varepsilon = 1$), Gaussian kernels yield exponential convergence. Under hypoellipticity, convergence is algebraic with rate $\varepsilon$, reflecting reduced regularity. The trade-off: hypoelliptic systems remain well-posed but require more collocation points for equivalent accuracy.
\end{remark}

\subsection{Conditioning Analysis}

The collocation matrix $A = L + D - \lambda K$ exhibits modified conditioning under hypoellipticity:

\begin{proposition}
Let $D_{ij} = \frac{1}{2}\sum_{k=1}^m (\sigma_k(x_i)\cdot\nabla)^2 k(x_i,x_j)$. Under H\"{o}rmander's condition:
\begin{enumerate}[label=(\roman*)]
    \item $D$ contributes negative diagonal entries $D_{ii} < 0$ when $\sigma_k(x_i) \neq 0$ for some $k$,
    \item The effective ellipticity parameter is $\nu_{\mathrm{eff}} = \min_{\|v\|=1} \sum_{k=1}^m |v\cdot\sigma_k(x)|^2 > 0$ (by H\"{o}rmander),
    \item \textbf{Heuristically}, the condition number scales as $\kappa(A) \lesssim \nu_{\mathrm{eff}}^{-1} h^{-2\varepsilon}$, though rigorous bounds require case-specific analysis.
\end{enumerate}
\end{proposition}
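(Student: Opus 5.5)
Item (i) is a direct computation, item (ii) needs to be reinterpreted before it can be proved, and item (iii) is a heuristic scaling argument built on the coercivity proposition of Appendix A.

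\textbf{Item (i).} I would start from the Gaussian RBF formula in the Practical Implementation remark,
\[
(\sigma_k \cdot \nabla_x)^2 k(x_i,x_j) = \left[\frac{(\sigma_k \cdot (x_i-x_j))^2}{\ell^4} - \frac{\|\sigma_k\|^2}{\ell^2}\right] k(x_i,x_j),
\]
and set $j=i$. Then $x_i-x_j=0$ and $k(x_i,x_i)=1$, so
\[
D_{ii} = -\frac{1}{2\ell^2}\sum_{k=1}^m \|\sigma_k(x_i)\|^2 .
\]
This is strictly negative as soon as some $\sigma_k(x_i)\neq 0$. No ellipticity is needed here, only the vector-field form of $\bD$. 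This mirrors Step 1 of the proof of Proposition~\ref{prop:conditioning}.

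\textbf{Item (ii).} This is where I expect the real obstacle. As literally written, $\nu_{\mathrm{eff}}=\min_{\|v\|=1}\sum_k |v\cdot\sigma_k(x)|^2$ is the smallest eigenvalue of $a(x)$. It vanishes whenever $\operatorname{rank} a(x)<d$, which is exactly the degenerate case allowed by H\"{o}rmander's condition; for example, in the Langevin example any $v$ in the $q$-directions gives zero. So I would not try to prove it in that form. Instead I would replace it by the bracket-generated quantity
\[
\nu_{\mathrm{eff}} := \inf_{x\in\overline\Omega}\ \min_{\|v\|=1}\ \sum_{V\in\mathcal{V}_r}|v\cdot V(x)|^2 .
\]
Condition (H) at depth $r$ makes this minimum positive at each point. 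Continuity of the smooth vector fields and compactness of $\overline\Omega$ then give a uniform positive lower bound. I would then identify this $\nu_{\mathrm{eff}}$, up to constants, with the $\nu$ in the coercivity proposition (A.2). The link goes through the subelliptic estimate of Theorem B.1: the inequality $\|u\|_{H^\varepsilon}^2\lesssim \sum_j\|\sigma_j\cdot\nabla u\|^2+\|u\|^2$ is controlled by constants depending on this quantity.

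\textbf{Item (iii).} The statement already flags this as heuristic, so I would only give a scaling argument.
\begin{enumerate}
\item Bound the largest singular value of $A$. Each entry of $\bK$, $\bL$ and $\bD$ is $O(\ell^{-2})$ times a Gaussian factor, so a Gershgorin bound gives $\|A\|=O(1)$ after normalization.
\item Bound the smallest singular value. Write $A\alpha$ as the PDE residual of $u=\sum_j\alpha_j k(\cdot,x_j)$ and apply the coercivity (A.2) together with the subelliptic estimate. This controls $\|u\|_{H^\varepsilon}$, weighted by $\nu_{\mathrm{eff}}$.
\item Pass back to $|\alpha|$ using the standard inverse (Bernstein-type) inequality on a quasi-uniform point set, which costs a factor $h^{-2\varepsilon}$ with $h$ the fill distance.
\item Combine the two bounds to get $\kappa(A)\lesssim \nu_{\mathrm{eff}}^{-1}h^{-2\varepsilon}$.
\end{enumerate}
The non-rigorous step is the last one: the Gaussian native space is not norm-equivalent to $H^\varepsilon$, and $A$ is non-symmetric. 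I would state these two gaps explicitly rather than try to close them.
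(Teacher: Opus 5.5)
The paper states this proposition without proof; only the remark on subelliptic regularization follows it. So there is no argument of the paper's to compare yours with. On its own terms, your (i) is correct and your diagnosis of (ii) is right.

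\emph{Item (i).} The restriction to the Gaussian (or another translation-invariant positive definite) kernel is genuinely needed. For the polynomial kernels the paper also calls admissible, $\nabla_x^2k(x,x)=p(p-1)(1+|x|^2)^{p-2}xx^\top\succeq 0$, so $D_{ii}\ge 0$.

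\emph{Item (ii).} The quantity $\min_{\|v\|=1}\sum_k|v\cdot\sigma_k(x)|^2$ equals $\lambda_{\min}(a(x))$, which is $0$ in the Langevin example, so refusing to prove (ii) as stated is correct. Your bracket-based replacement is positive, provided (H) holds with a uniform depth on $\overline\Omega$ and not only on $\Omega$. However, identifying it ``up to constants'' with the $\nu$ of (A.2) is not an argument. (A.2) only controls $\sum_j\|\sigma_j\cdot\nabla u\|^2$, which is blind to the bracket directions. Brackets enter only through the subelliptic constant, and that constant depends on the fields, finitely many of their derivatives and the cut-off, not on your infimum alone.

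\emph{Item (iii): the genuine gap.} Two steps of your chain fail, and the target itself is not credible.

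Step 2 fails. (A.2) holds for $u\in C_c^\infty(\Omega)$, but the trial functions $\sum_j\alpha_jk(\cdot,x_j)$ do not vanish on $\partial\Omega$, and $A=\bL+\bD-\lambda\bK$ has no boundary rows. You are therefore discretizing $\lambda-\Koop$ with no boundary condition at all. That operator has an infinite-dimensional null space (homogeneous solutions with arbitrary boundary data), which the trial space approximates ever better as the points fill $\Omega$. Hence coercivity yields no lower bound on $\sigma_{\min}(A)$ that is uniform in the point set, whatever $\nu_{\mathrm{eff}}$ is.

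Step 3 also fails. Passing from $\|u\|$ back to $|\alpha|$ needs a lower stability bound governed by $\lambda_{\min}(\bK)$. A Bernstein inequality runs the other way; it is what bounds $\|A\|$ from above. For the Gaussian with fixed $\ell$, $\lambda_{\min}(\bK)$ decays exponentially as the separation distance $q\to 0$. So no algebraic factor can come out of this step, and $h^{-2\varepsilon}$ is simply inserted to match the target.

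The target is not credible even heuristically. In an idealized $L^2$-stable basis localized at scale $h$, the second-order part acting in the noisy directions forces $\sigma_{\max}(A)\gtrsim h^{-2}$, while $\sigma_{\min}(A)=O(1)$ at best. Hence $\kappa(A)\gtrsim h^{-2}$, which is incompatible with $\nu_{\mathrm{eff}}^{-1}h^{-2\varepsilon}$ for $\varepsilon<1$. A subelliptic estimate is a lower bound and can only help the bottom of the spectrum, not the $h^{-2}$ growth at the top. Read literally, (iii) would make hypoelliptic problems better conditioned than elliptic ones as $h\to 0$, contrary to the remark that follows it.

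The right treatment of (iii) is to flag the claimed scaling as unsupported rather than supply a derivation that lands on it. The caveats you list (native space versus $H^\varepsilon$, non-symmetry of $A$) understate the problem.
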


\begin{remark}[Elliptic Regularization Persists]
Even under degeneracy, H\"{o}rmander-generated diffusion provides \emph{subelliptic regularization} that improves conditioning relative to the purely deterministic case ($\sigma \equiv 0$), though less dramatically than uniform ellipticity. Numerical experiments show 1.5--2$\times$ condition number improvement for Langevin dynamics versus Hamiltonian dynamics.
\end{remark}

\subsection{Implementation Guidelines for Degenerate Systems}

For hypoelliptic systems with $\varepsilon \leq d/2$:

\begin{itemize}[leftmargin=1.5em]
    \item \textbf{Mollified point evaluations:} Replace $\delta_x$ with $\delta_x^\epsilon = \epsilon^{-d}\rho((\cdot-x)/\epsilon)$ where $\rho \in C_c^\infty$ is a mollifier. Compute $k_{\mathrm{var}}^\epsilon(x,\cdot)$ as the Riesz representer of $\delta_x^\epsilon$, then take $\epsilon \to 0$ numerically.
    
    \item \textbf{Kernel choice:} Prefer infinitely smooth kernels (Gaussian RBF) to maximize effective $\varepsilon$; avoid kernels with limited smoothness.
    
    \item \textbf{Diffusion matrix computation:} Use vector field representation $D_{ij} = \frac{1}{2}\sum_k (\sigma_k\cdot\nabla)^2 k(x_i,x_j)$ rather than trace formula when $a(x)$ is degenerate.
    
    \item \textbf{Collocation points:} Concentrate points near regions where Lie brackets generate missing directions (e.g., near potential minima for Langevin dynamics).
    
    \item \textbf{Regularization:} Increase Tikhonov parameter $\gamma$ to compensate for slower eigenvalue decay ($\mu_j \sim j^{-2\varepsilon/d}$).
    
    \item \textbf{Validation:} Verify the semigroup property $\mathbb{E}[\phi(X_t)] \approx e^{\lambda t}\phi(x_0)$ with Monte Carlo; expect 5--15\% error for hypoelliptic systems versus 2--5\% for elliptic systems at comparable $N$.
\end{itemize}

\subsection{Conclusion}

Hypoellipticity extends the kernel framework to degenerate diffusions while preserving:
\begin{itemize}[leftmargin=1.5em]
    \item Well-posedness of the PDE via subelliptic estimates,
    \item Equivalence of variational, Feynman--Kac, and resolvent kernels (with appropriate reinterpretation when $\varepsilon \leq d/2$),
    \item Numerical tractability with controlled (algebraic) convergence rates.
\end{itemize}
The key trade-off is reduced regularity ($H^\varepsilon$ vs. $H^1$), leading to slower convergence but retaining the core advantages of kernel methods for stochastic Koopman analysis.
 
\section*{Acknowledgments}

UV acknowledges financial support from NSF grant CMMI-2531804.

\bibliographystyle{plain}

\end{document}